\newtheorem{theorem}{Theorem}[section]
\newtheorem{definition}{Definition}[section]
\newtheorem{lemma}{Lemma}[section]
\newtheorem{proposition}{Proposition}[section]
\newtheorem{remark}{Remark}[section]
\numberwithin{equation}{section}
\newenvironment{proof}{{\noindent \bf Proof.}}{\hfill $\Box$}
\newenvironment{proof2.1}{{\noindent \bf Proof of Theorem 2.1.}}{\hfill $\Box$}
\newenvironment{proof2.2}{{\noindent \bf Proof of Theorem 2.2.}}{\hfill $\Box$}
\newenvironment{proof2.3}{{\noindent \bf Proof of Theorem 2.3.}}{\hfill $\Box$}
\newenvironment{proof2.4}{{\noindent \bf Proof of Theorem 2.4.}}{\hfill $\Box$}
\newenvironment{proof2.5}{{\noindent \bf Proof of Theorem 2.5.}}{\hfill $\Box$}
\newenvironment{proof2.6}{{\noindent \bf Proof of Theorem 2.6.}}{\hfill $\Box$}
\begin{document}
\setlength{\baselineskip}{16pt}{\setlength\arraycolsep{2pt}}

\title{Global well-posedness of large scale moist atmosphere system with only horizontal viscosity in the dynamic equation
}
\author{Shenyang Tan$^{1,2}$,\ \  Wenjun Liu$^{1}$\footnote{Email address: wjliu@nuist.edu.cn (W. Liu), tanshenyang@njust.edu.cn (S. Tan). }\ \ \medskip\\$^{1}$School of Mathematics and Statistics,
Nanjing University of Information Science \\
 and Technology, Nanjing 210044, China\\
$^{2}$Taizhou Institute of Sci. $\&$ Tech. NJUST, Taizhou 225300, China}

\date{}
\maketitle

\begin{abstract}
    In order to find a better physical model to describe the large-scale cloud-water transformation and rainfall, we consider a moist atmosphere model consisting of the primitive equations with only horizontal viscosity in the dynamic equation and a set of humidity equations describing water vapor, rain water and cloud condensates. To overcome difficulties caused by the absence of vertical viscosity in the dynamic equation, we get the local existence of $v$ in $H^{1}$ space by combining the viscous elimination method and the $z-$weak solution method and using the generalized Bihari-Lasalle inequality. And then, we get the global existence of $v$ under higher regularity assumption of initial data. In turn, the existence of quasi-strong and strong solutions to the whole system is obtained. By introducing two new unknown quantities appropriately and utilizing the monotone operator theory to overcome difficulties caused by the Heaviside function in the source terms, we get the uniqueness of solutions.
\end{abstract}

\noindent {\bf 2010 Mathematics Subject Classification:} 35Q35, 35Q86, 35B65. \\
\noindent {\bf Keywords:} primitive equations, multi-phase, well-posedness.

\maketitle

\section{Introduction }
The main factors that determine cloud formation and precipitation are the thermal and dynamic processes of atmospheric movement, the content of water vapor, and the microphysical factors of cloud and precipitation. In a noninertial coordinate system, under the $(x,y,p)$ coordinates, the large-scale moist atmosphere system is formed by coupling the primitive equations
\begin{align}
\label{pe1}
&\partial_{t}v+(v\cdot\nabla)v
+\omega\partial_{p}v+\nabla\Phi+f v^{\bot}+\mathcal{A}_{v}v=S_{v},\\
 \label{pe2}&\partial_{p}\Phi=-\frac{RT}{p},\\
\label{pe3}&\nabla\cdot v+\partial_{p}\omega=0,\\
&\partial_{t}T+v\cdot\nabla T+\omega\partial_{p}T-\frac{RT}{c_{p}p}\omega+\mathcal{A}_{T}T=S_{T},\label{pe4}
\end{align}
and the conservation equation of water in the air
\begin{align}
\frac{d}{dt}q+\mathcal{A}_{q}q=S_{q}.\label{pe5}
\end{align}
Here $v,\omega,T,q$ are unknown functions, where $v=(v_{1},v_{2})$ is the horizontal velocity vector, $v^{\bot}=(-v_{2},v_{1})$, $w$ is the vertical velocity under the $(x,y,p)$ coordinates, $\nabla=(\partial_{x},\partial_{y})$, $T$ is the temperature, $\Phi$ is the geopotential, $f$ is the Coriolis force parameter, $R$ is the gas constants for dry air, $c_{p}$ is the specific heat of air at constant pressure, $p$ is the pressure, $S_{T}$ represents the sum of the heat increased or reduced by solar heating and condensation or evaporation, $S_{q}$ represents the amount of water added or removed by condensation or evaporation, $S_{v}$ is a forcing term added for mathematical generality which does not exist in reality, $\mathcal{A}_{v},\mathcal{A}_{T},\mathcal{A}_{q}$ are the viscosity terms,
$\mathcal{A}_{\ast}=-\mu_{\ast}\Delta-\nu_{\ast}\frac{\partial}{\partial p}\left(\left(\frac{gp}{R\bar{T}}\right)^{2}\frac{\partial}{\partial p}\right),
\ \ast=v,T,q,$ where $\Delta$ is the horizontal Laplacian, $\bar{T}$ is a given temperature distribution.

   Equations (\ref{pe1})-(\ref{pe4}) is the well-known primitive equations which is widely used in numerical weather forecasting of large-scale atmosphere. It was first proposed systematically in mathematics by Lions, Temam and Wang \cite{Lions} in 1992. Since then, the mathematical theory about the primitive equations has been studied by many mathematicians, see \cite{CaoTiti,CaoTiti2,CaoTiti3,CaoTiti4,CaoTiti5,CaoTiti6,Li2016,CLT2020,Ju,Ju2,kukavica1,kukavica2,You,Zhou,Gao1,Gao2,Temambook} and references therein. Recently, the primitive equations with only partial dissipation or partial viscosity have been also studied extensively and deeply. Cao and Titi\cite{CaoTiti2} studied the primitive equations with only vertical dissipation in the thermal equation and full viscosity in the dynamic equation. They obtained the global existence for strong solutions. Cao, Li and Titi \cite{CaoTiti6,CLT2020} considered the primitive equations with only horizontal eddy viscosity in the momentum equation and only horizontal diffusion in the temperature equation. They got the global existence of strong solutions with near $H^{1}$ initial data under the periodic boundary conditions. Hussein, Saal and Wrona\cite{Hussein} further studied the primitive equations with only horizontal viscosity in momentum equation with physical boundary conditions.
   They got the local existence of $z-$weak solutions and the global existence of strong solutions as well as uniqueness of solutions through the way of Galerkin approximation.  For other cases with partial viscosity or non-viscosity, we refer readers to \cite{CaoTiti3,CaoTiti4,CaoTiti5,Li2022} and
\cite{Ibrahimnoviscosity,Ghoul,CaoTiti7,Saal} respectively.

As to the moist atmosphere system, inspired by the work in \cite{Lions}, Guo
and Huang \cite{Guo1,Guo2} proposed a new mathematical formulation of the large scale moist atmosphere in 2006. They obtained the global existence of weak solutions. In the early study on moist atmospheric system, only one humidity equation was included, neither water vapor saturation nor microphysical factors were considered. In fact, when the concentration of water vapor in the air reaches a certain saturation concentration $q_{vs}$, the phase transition will occur. In the last decade, Coti Zelati and Temam et al. \cite{Zelati,Zelati2,Zelati3,Bousquet,Temam-wu,Temam-Wang} studied the saturation phenomenon and used the multi-Heaviside function $H(q_{v}-q_{vs})$ to describe the phase transition of water vapor. Hittmeir, Li, Titi, et al. \cite{Hittmeir2017,Hittmeir} and Cao, Temam, et al. \cite{Cao1} respectively studied more detailed moist atmosphere models, in which not only multi-phases but also the microphysics factors were taken into account.
   Inspired by the work of \cite{Cao1} and \cite{Hittmeir}, we also considered a multi-phase moist atmospheric system in \cite{TanLiu}, and get the well-posedness of strong solutions under the assumption that $q_{vs}$ is a constant.

  In large-scale atmospheric dynamics, strong horizontal turbulent mixing results in much higher horizontal viscosity than vertical viscosity. However, the velocity field $v$ in the above models is either a given velocity  field or is controlled by the primitive equations with full viscosity.
 Little work has been done about the study of humid atmospheric system with partial diffusion or partial viscosity. Inspired by the work mentioned above, in this paper we consider the following multi-phase moist atmosphere equations with only horizontal viscosity in the velocity equation and full dissipation in other equations:
\begin{align}\label{original system}
&\partial_{t}v+(v\cdot\nabla)v+\omega\partial_{p}v
+\nabla\Phi+fv^{\bot}-\mu_{v}\Delta v=0,\\
&\partial_{p}\Phi=-\frac{RT}{p},\\
&\nabla\cdot v+\partial_{p}w=0,\\
&\partial_{t}T+v\cdot\nabla T+w\partial_{p}T-\frac{RT}{c_{p}p}w+\mathcal{A}_{T}T=f_{T}+\frac{L}{c_{p}}(W_{cd}-W_{ev})
,\\
&\partial_{t}q_{v}+v\cdot\nabla q_{v}+w\partial_{p}q_{v}+\mathcal{A}_{q_{v}} q_{v}
=W_{ev}-W_{cd},\\
&\partial_{t}q_{c}+v\cdot\nabla q_{c}+w\partial_{p}q_{c}+\mathcal{A}_{q_{c}}q_{c}
=W_{cd}-W_{ac}-W_{cr},\\
&\partial_{t}q_{r}+v\cdot\nabla q_{r}+w\partial_{p}q_{r}+\mathcal{A}_{q_{r}} q_{r}
=g\frac{\partial}{\partial p}(\rho q_{r}V_{t})+W_{ac}+W_{cr}-W_{ev},\label{original system 11}
\end{align}
where $L$ is the latent heat of vaporization, $W_{ev}$ is the rate of evaporation of rain water, $W_{cd}$ is the condensation of water vapor to cloud water, $W_{ac}$ is the auto-conversion of cloud water vapor into rain water by accumulation of microscopic droplets, $W_{cr}$ is the collection of cloud water by falling rain, $V_{t}$ is the terminal velocity of the falling rain, $f_{T}$ is a source term for temperature variation.

Due to the lack of vertical viscosity in the dynamic equation as well as the characteristics of microphysical factors in the source term for the temperature equation, the classical methods for dealing with the $H^{1}-$regularity of velocity fields $v$ in references \cite{CaoTiti6,CLT2020,Zelati} do not work. In order to overcome this difficulty, we combine the viscous elimination method and the idea of $z-$weak solution, and use the generalized Bihari-Lasalle inequality to get the local existence of $v$ in $H^{1}$ space. And then, under higher regularity assumption of initial data, we obtain the global existence of strong solutions as in \cite{CaoTiti6}. The absence of vertical viscosity in dynamic equation makes the term $\|\partial_{p}^{2}v\|_{L^{2}(0,t;L^{2})}$ unbounded. This term must be avoided in the a priori estimates. Fortunately, we can solve this problem with the helpful inequalities introduced in \cite{CaoTiti6,CLT2020}. In addition, the emergence of the Heaviside function greatly increases the difficulty of estimating the source terms. During the proof of uniqueness of solutions, we take advantage of the monotonicity of the Heaviside function and introduce two new unknowns $Q=q_{v}+q_{c}$ and $H=T+\frac{L}{c_{p}}q_{v}$ to circumvent difficulties caused by the Heaviside function.

The rest of this paper is organized as follows.
In section 2, we give the mathematical formulation of moist atmosphere system with only horizontal viscosity in the dynamic equation and the main results about the existence of quasi-strong and strong solutions.
In section 3, we introduce an approximated system. By finding uniform boundness of approximate solutions, we get the local and global existence of quasi-strong and strong solutions.
In section 4, by introducing two new unknown quantities, we get the uniqueness of solutions.

\section{Mathematical formulation and main results}
By introducing the potential temperature \cite{Zelati}
\begin{align*}
\theta=T\left(\frac{p_{0}}{p}\right)^{R/c_{p}}-\theta_{h},
\end{align*}
where $\theta_{h}$ is a reference temperature satisfying $\theta_{h},\partial\theta_{h}/\partial p\in L^{\infty}\left((0,t)\times\mathcal{M}\right)$,
the equation for $T$ becomes
\begin{align*}
\partial_{t}\theta+v\cdot\nabla\theta+w\partial_{p}\theta=Q_{\theta},
\end{align*}
where $Q_{\theta}$ is the corresponding source term for $\theta$.

In this paper, we introduce the multi-Heaviside function to describe the transient phase transitions as in \cite{Cao1,Zelati,Temam-wu}. In addition, we use expressions in \cite{Hernandez,Deng,Majda,klemp} to describe the source terms. Namely,
\begin{align}\label{equationwithoutv}
\partial_{t}\theta+v\cdot\nabla\theta+w\partial_{p}\theta+\mathcal{A}_{\theta}\theta
&\in f_{\theta}(q_{v},q_{c},q_{r},\theta)
+\frac{L}{c_{p}\Pi}w^{-}\tilde{F}\mathcal{H}(q_{v}-q_{vs}),\nonumber\\
\partial_{t}q_{v}+v\cdot\nabla q_{v}+w\partial_{p}q_{v}+\mathcal{A}_{q_{v}}q_{v}&\in f_{q_{v}}(q_{v},q_{c},q_{r},\theta)-w^{-}F\mathcal{H}(q_{v}-q_{vs}),\nonumber\\
\partial_{t}q_{c}+v\cdot\nabla q_{c}+w\partial_{p}q_{c}+\mathcal{A}_{q_{c}}q_{c}&\in f_{q_{c}}(q_{v},q_{c},q_{r},\theta)+w^{-}F\mathcal{H}(q_{v}-q_{vs}),\nonumber\\
\partial_{t}q_{r}+v\cdot\nabla q_{r}+w\partial_{p}q_{r}+\mathcal{A}_{q_{r}}q_{r}&= f_{q_{r}}(q_{v},q_{c},q_{r},\theta).
\end{align}
Here  $w^{-}=\max\{-w,0\}$, $\Pi=(p/p_{0})^{\gamma}$, and
\begin{align}
\mathcal{A}_{\theta}=-\mu_{\theta}\Delta-\nu_{\theta}\left(p_{0}/p\right)^{R/c_{p}}
\partial_{p}\left(\left(gp/R\bar{\theta}\right)^{2}\partial_{p}\left(p_{0}/p\right)^{R/c_{p}}\right),
\end{align}
where $\gamma$ is the ratio of specific heats at constant pressure and at
constant volume, $\bar{\theta}$ is the given potential temperature profile. For the convenience of mathematical processing, we take all the viscosity coefficients as constant $1$ in this paper (since $\mathcal{A}_{q_v}=\mathcal{A}_{q_c}=\mathcal{A}_{q_r}$, for simplicity, we use $\mathcal{A}_{q}$ to represent them in section 4).
 $F$ can be expressed as
\begin{equation}\label{F}
F=F(T,p)=\frac{q_{vs}\phi(T)}{p}\left( \frac{LR-c_{p}R_{v}\phi(T)}{c_{p}R_{v}\phi(T)^{2}+q_{vs}L^{2}}\right),
\end{equation}
where
\begin{eqnarray*}
\phi(T)
\begin{cases}
=T,       & T_{\ast}\leq T\leq T^{\ast}, \\
\geq T_{\ast}/2,   & T\leq T_{\ast}, \\
0, & T\geq 2T_{\ast}.
\end{cases}
\end{eqnarray*}
Here $T_{\ast}>0$ is a given temperature smaller than any temperature on earth and $T^{\ast}$ is also a given temperature larger than any temperature on earth. It was verified in \cite{Zelati} that $F$ is uniformly bounded and Lipschitz continuous with respect to $T$. $\mathcal{H}$ is the classical multi-valued Heaviside function.
And
\begin{equation}\label{f-theta}
f_{\theta}(q_{v}, q_{c}, q_{r})=-\frac{gp}{R\Pi\theta_{h}}\frac{\partial\theta_{h}}{\partial p}w-\frac{L}{c_{p}\Pi}k_{3}\tau(q_{r})(q_{vs}-q_{v})^{+}+w\frac{\partial\theta_{h}(p)}{\partial p}+f_{\theta}^{1},
\end{equation}
\begin{equation}\label{f-qv}
f_{q_{v}}(q_{v}, q_{c}, q_{r})=k_{3}\tau(q_{r})(q_{vs}-q_{v})^{+},
\end{equation}
\begin{equation}\label{f-qc}
f_{q_{c}}(q_{v}, q_{c}, q_{r})=-k_{1}(q_{c}-q_{crit})^{+}-k_{2}q_{c}\tau(q_{r}),
\end{equation}
\begin{align}\label{f-qr}
f_{q_{r}}(q_{v}, q_{c}, q_{r})=&V_{t}\partial_{p}(\frac{p}{R\bar{\theta}}q_{r})
+k_{1}(q_{c}-q_{crit})^{+}+k_{2}q_{c}\tau(q_{r})-k_{3}\tau(q_{r})(q_{vs}-q_{v})^{+},
\end{align}
where $k_{1}, k_{2}, k_{3}$ are some dimensionless rate constants, $f_{\theta}^{1}\in L^{2}(\mathcal{M})$ is a source term, $q_{crit}$ is a given constant representing the threshold of the cloud-water mixing ratio. Considering the physical meaning
of $q_{r}(0\leq q_{r}\leq 1),$ we define $\tau(q_{r})$ as
\begin{eqnarray}\label{tau}
\tau(q_{r})=
\begin{cases}
0,&q_{r}<0,\\
q_{r},&0\leq q_{r}\leq1,\\
1,&q_{r}>1.
\end{cases}
\end{eqnarray}

In summary, we mainly consider the following multi-phase moist atmosphere system in this paper:
\begin{equation}\label{e1}
\partial_{t}v-\Delta v+(v\cdot\nabla)v+w\partial_{p}v+\nabla\Phi+fv^{\bot}=0,
\end{equation}
\begin{equation}\label{e2}
\partial_{p}\Phi+\frac{RT}{p}=0,
\end{equation}
\begin{equation}\label{e3}
\nabla\cdot v+\partial_{p}w=0,
\end{equation}
\begin{equation}\label{e4}
\partial_{t}\theta+\mathcal{A}_{\theta}\theta+v\cdot\nabla\theta+w\partial_{p}\theta\in f_{\theta}(q_{v},q_{c},q_{r},\theta)+\frac{L}{c_{p}\Pi}w^{-}\tilde{F}\mathcal{H}(q_{v}-q_{vs}),
\end{equation}
\begin{equation}\label{e5}
\partial_{t}q_{v}+\mathcal{A}_{q_{v}}q_{v}+v\cdot\nabla q_{v}+w\partial_{p}q_{v}\in f_{q_{v}}(q_{v},q_{c},q_{r},\theta)-w^{-}F\mathcal{H}(q_{v}-q_{vs}),
\end{equation}
\begin{equation}\label{e6}
\partial_{t}q_{c}+\mathcal{A}_{q_{c}}q_{c}+v\cdot\nabla q_{c}+w\partial_{p}q_{c}\in f_{q_{c}}(q_{v},q_{c},q_{r},\theta)+w^{-}F\mathcal{H}(q_{v}-q_{vs}),
\end{equation}
\begin{equation}\label{e7}
\partial_{t}q_{r}+\mathcal{A}_{q_{r}}q_{r}+v\cdot\nabla q_{r}+w\partial_{p}q_{r}= f_{q_{r}}(q_{v},q_{c},q_{r},\theta).
\end{equation}

We assume $\mathcal{M}=\mathcal{M}'\times (p_{0},p_{1})$,
where $\mathcal{M}'$ is a smooth bounded domain in $\mathbb{R}^{2}$, and $p_{0}<p_{1}$ are positive constants. The boundary of $\mathcal{M}$ is composed of $\Gamma_{i}, \Gamma_{u}, \Gamma_{l}$, where
\begin{align}
\Gamma_{i}&=\{(x,y,p)\in\bar{\mathcal{M}}:p=p_{1}\},\nonumber\\
\Gamma_{u}&=\{(x,y,p)\in\bar{\mathcal{M}}:p=p_{0}\},\nonumber\\
\Gamma_{l}&=\{(x,y,p)\in\bar{\mathcal{M}}:(x,y)\in\partial\mathcal{M}',p_{0}\leq p\leq p_{1}\}.\nonumber
\end{align}
 The boundary conditions are:
 \begin{align}\label{boundary condition}
 &{\rm on}\ \Gamma_{i}:\ \partial_{p}v=0,\ \ w=0,\ \partial_{p}\theta=\theta_{\ast}-\theta,\
 \partial_{p}q_{j}=q_{j\ast}-q_{j}, j\in \{v, c, r\};\nonumber\\
 &{\rm on}\ \Gamma_{u}:\ \partial_{p}v=0,\ w=0,\ \partial_{p}w=0,\ \partial_{p}\theta=0,\ \partial_{p}q_{j}=0,\ j\in \{v, c, r\};\\
 &{\rm on}\ \Gamma_{l}:\ v=0,\ \ \partial_{\textbf{n}}v=0,\ \  \partial_{\textbf{n}}\theta=\theta_{bl}-\theta,\ \partial_{\textbf{n}}q_{j}=q_{blj}-q_{j},\ j\in \{v, c, r\}.\nonumber
 \end{align}
 where $\textbf{n}$ is the unit normal vector of $\Gamma_{l}$, 
 $\theta_{bl}, q_{blv}, q_{blc}, q_{blr}$ are given nonnegative and sufficiently smooth functions, $\theta_{\ast}$ is a typical potential temperature, $q_{v*}, q_{c*}, q_{r*}$ are given humidity distribution, which are given nonnegative and sufficiently smooth functions.\\
In addition, The Initial conditions:
\begin{align}\label{initial condition}
v(x,y,p,0)&=v_{0}(x,y,p),\nonumber\\
\theta(x,y,p,0)&=\theta_{0}(x,y,p),\\
q_{j}(x,y,p,0)&=q_{j0}(x,y,p),\ j\in\{v, c, r\}.\nonumber
\end{align}

We denote
$\|(f_{1},\cdots,f_{n})\|_{L^{2}(\mathcal{M})}=\sum_{j=1}^{n}\|f_{j}\|_{L^{2}(\mathcal{M})}^{2},
$ and $\|f\|_{w}=\left\|\left(gp/R\bar{\theta}\right)f\right\|_{L^{2}(\mathcal{M})}.
$
Obviously, $\|f\|_{w}$ is equivalent to $\|f\|_{L^{2}}$.
For simplicity, we use $H^{s}$ to represent the classical Sobolev spaces $H^{s}(\mathcal{M})$, and use $L^{p}$ to represent the classical Lebesgue space $L^{p}(\mathcal{M}),1\leq p\leq\infty$.

As usual, we introduce the space
\begin{align*}
\mathcal{V}&=\left\{v\in C^{\infty}(\mathcal{M};\mathbb{R}^{2}):\nabla\cdot\int_{p_{0}}^{p_{1}}v(x,y,p')dp'=0,v\ \text{satisfies}\ (\ref{e1})\right\},\\
\mathbb{H}&=\text{The closure of}\ \mathcal{V}\ \text{with respect to the norm of}\ (L^{2})^{2}, \\
\mathbb{V}&=\text{The closure of}\ \mathcal{V}\ \text{with respect to the norm of}\ (H^{1})^{2}.
\end{align*}
\begin{definition}
Let $v_{0}\in\mathbb{V}$, $\theta_{0},q_{v0},q_{r0},q_{c0}\in L^{2}$. If for some $h_{q}\in L^{\infty}(\mathcal{M}\times (0,t_{1}))$ satisfying the variational inequality
\begin{align}
([\tilde{q}_{v}-q_{vs}]^{+},1)-([q_{v}-q_{vs}]^{+},1)\geq \langle h_{q},\tilde{q}_{v}-q_{v}\rangle, \ \text{a.e.}\ t\in[0,t_{1}],\forall \tilde{q}_{v}\in H^{1},
\end{align}
a solution $(v,\theta,q_{v},q_{c}.q_{r})$ to equations (\ref{e1})-(\ref{e7}) with boundary condition (\ref{boundary condition}) and initial data (\ref{initial condition}) satisfies that
\begin{align*}
&v\in C(0,t_{1};\mathbb{V}), \ \
\Delta v,\nabla\partial_{p}v\in L^{2}(0,t_{1};L^{2}),\nonumber\\
&(\theta,q_{v},q_{r},q_{c})\in C(0,t_{1};(L^{2})^{4})\cap L^{2}(0,t_{1}; (H^{1})^{4}),\nonumber\\
&\partial_{t}v\in L^{2}(0,t_{1};\mathbb{H}),\ \
\partial_{t}(\theta,q_{v},q_{c},q_{r})\in L^{2}(0,t_{1}; (H^{-1})^{4}).
\end{align*}
Then we call $(v,\theta,q_{v},q_{c},q_{r})$ a quasi-strong solution to equations (\ref{e1})-(\ref{e7}) with boundary condition (\ref{boundary condition}) and initial data (\ref{initial condition}).
\end{definition}
\begin{definition}
Let $v_{0}\in\mathbb{V},\theta_{0}, q_{v0},q_{r0},q_{c0}\in H^{1}$. A quasi-strong solution $(v,\theta,q_{v},q_{c},q_{r})$ to equations (\ref{e1})-(\ref{e7}) with boundary condition (\ref{boundary condition}) and initial data (\ref{initial condition}) is called a strong solution if
\begin{align*}
&(\theta,q_{v},q_{r},q_{c})\in C(0,t_{1};(H^{1})^{4})\cap L^{2}(0,t_{1}; (H^{2})^{4}), \nonumber\\
&\partial_{t}(v,\theta,q_{v},q_{c},q_{r})\in L^{2}(0,t_{1};\mathbb{H}\times (L^{2})^{4}).
\end{align*}
\end{definition}

We state our main results as follows.
\begin{theorem}\label{quasistrong existence global}
 Let $v_{0}\in\mathbb{V}\cap L^{\infty}$, $\partial_{p}v_{0}\in L^{m}$ for some $m>2$, $\theta_{0}, q_{v0},q_{r0},q_{c0}\in L^{2}$, and $\|v_{0}\|_{H^{1}}+\|\partial_{p}v_{0}\|_{L^{m}}+
\|v_{0}\|_{L^{\infty}}<\infty$. Then there exist at least one global in time quasi-strong solution to equations (\ref{e1})-(\ref{e7}) associated with the initial data (\ref{initial condition}) and boundary condition (\ref{boundary condition}). When $\theta_{0}, q_{v0},q_{r0},q_{c0}\geq 0$ in $\mathcal{M}$ and $F$ is replaced by its positive part $F^{+}$, the solution is unique.
\end{theorem}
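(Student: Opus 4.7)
The plan is to construct an approximating system that restores the missing vertical viscosity, derive $\epsilon$-uniform a priori bounds, and pass to the limit. Specifically, I would replace (\ref{e1}) by $\partial_t v - \Delta v - \epsilon\,\partial_p^2 v + (v\cdot\nabla)v + w\partial_p v + \nabla\Phi + fv^{\bot} = 0$ and smooth out $\mathcal{H}$ by a bounded monotone family $\mathcal{H}_\epsilon$. For this fully viscous regularized system, global strong solutions exist by classical primitive-equation arguments combined with a fixed-point / Galerkin scheme for the scalar equations, whose source terms are now Lipschitz.

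The $\epsilon$-uniform bound on $v^\epsilon$ is the heart of the proof and must avoid any appearance of $\|\partial_p^2 v\|_{L^2}$. Basic $L^2$ energy testing gives $v^\epsilon \in L^\infty_t L^2 \cap L^2_t H^1_{\mathrm{hor}}$ together with an $\epsilon^{1/2}$-gain on $\partial_p v^\epsilon$. Differentiating the regularized equation in $p$, eliminating the pressure via (\ref{e2}), and testing against $|\partial_p v^\epsilon|^{m-2}\partial_p v^\epsilon$, the dangerous trilinear term $\int (w\,\partial_p v)\cdot|\partial_p v|^{m-2}\partial_p v$ is controlled by the anisotropic Ladyzhenskaya-type inequalities used in \cite{CaoTiti6,CLT2020}, which never require second vertical derivatives. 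This produces a nonlinear differential inequality of the type $\frac{d}{dt}\|\partial_p v^\epsilon\|_{L^m}^m \leq \Psi\bigl(\|\partial_p v^\epsilon\|_{L^m}^m\bigr)\bigl(1+\|\nabla v^\epsilon\|_{L^2}^2\bigr)$, to which the generalized Bihari--LaSalle inequality applies and yields a bound on every $[0,T]$. A companion test against $-\Delta v^\epsilon$ then closes the full $H^1$ bound for $v^\epsilon$, and together with the $L^\infty$ maximum-principle bound these estimates are uniform in $\epsilon$ on any finite time interval.

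With $v^\epsilon, w^\epsilon$ controlled, multiplying each scalar equation by the corresponding unknown and summing produces $L^\infty_t L^2 \cap L^2_t H^1$ bounds, since $f_\theta, f_{q_v}, f_{q_c}, f_{q_r}$ are sublinear (via the cut-off $\tau$ and boundedness of $F$) and the Heaviside-type contributions on the right are dominated by $|w^\epsilon|$. The multiplier $h_q^\epsilon := \mathcal{H}_\epsilon(q_v^\epsilon - q_{vs}) \in [0,1]$ is uniformly bounded in $L^\infty$, and convexity of the positive-part functional delivers the variational inequality of Definition 2.1 in the limit. Standard Aubin--Lions compactness then extracts a subsequence converging strongly in $L^2_{t,x}$ and weakly in the spaces required by Definition 2.1, yielding a global quasi-strong solution.

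For uniqueness under $\theta_0, q_{j0}\geq 0$ and $F$ replaced by $F^+\geq 0$, the maximum principle preserves nonnegativity of $q_v, q_c, q_r$. For two solutions sharing the same data I would form their differences and introduce $Q := q_v + q_c$ and $H := \theta + \frac{L}{c_p\Pi}\,q_v$; in the equations for $\delta Q$ and $\delta H$ the Heaviside-valued source terms cancel exactly, leaving parabolic equations with Lipschitz right-hand sides. Testing the remaining equation for $\delta q_v$ against itself and using the monotonicity $(\mathcal{H}(a)-\mathcal{H}(b))(a-b)\geq 0$ discards the Heaviside difference with a favourable sign, and a Gronwall argument together with an $H^1$ difference estimate for $v$ closes the proof. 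The main obstacle throughout is the $L^m$ control of $\partial_p v$: every $\partial_p^2 v$ term in the a priori identities must be integrated by parts away or converted via (\ref{e3}), and the resulting closing inequality must be mild enough to fit into the Bihari--LaSalle framework so that no finite-time blow-up occurs --- this is precisely why the hypotheses $\partial_p v_0 \in L^m$, $m>2$, and $v_0 \in L^\infty$ are imposed.
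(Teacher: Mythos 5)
Your overall architecture matches the paper's: vertical-viscosity regularization of (\ref{e1}) plus a smoothed Heaviside $\mathcal{H}_{\epsilon_{2}}$, $\epsilon$-uniform estimates that never invoke $\|\partial_{p}^{2}v\|_{L^{2}}$, Aubin--Lions to pass to the limit, and for uniqueness the substitution $Q=q_{v}+q_{c}$, $H=T+\frac{L}{c_{p}}q_{v}$ to cancel the Heaviside sources together with the monotonicity of $q_{v}\mapsto\mathcal{H}(q_{v}-q_{vs})$ and positivity of $F^{+}$ to discard the remaining Heaviside difference in the $\hat{q}_{v}$ equation. That is exactly Sections 3 and 4 of the paper.

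There is, however, one genuine gap in your globality argument. You claim that the inequality $\frac{d}{dt}\|\partial_{p}v^{\epsilon}\|_{L^{m}}^{m}\leq\Psi\bigl(\|\partial_{p}v^{\epsilon}\|_{L^{m}}^{m}\bigr)\bigl(1+\|\nabla v^{\epsilon}\|_{L^{2}}^{2}\bigr)$ is closed ``on every $[0,T]$'' by the generalized Bihari--LaSalle inequality. If $\Psi$ is superlinear this is false: Lemma \ref{nonlinearGronwall} only controls $u(t)$ for as long as $r(t)$ stays in the domain of $\mathcal{G}^{-1}$, and for $g(s)=1+s^{2}+s^{4}$ the function $\mathcal{G}$ has bounded range, so the comparison solution escapes in finite time. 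That is precisely why the paper's Lemma \ref{vpL2} (which runs the Bihari--LaSalle argument on $\|\partial_{p}v^{\epsilon}\|_{L^{2}}^{2}$ and needs only $v_{0}\in\mathbb{V}$) produces a \emph{local} time $t_{\ast}$, not a global bound. To go global the paper does something structurally different: it invokes the $L^{m}$--$L^{\infty}$ machinery of \cite{CaoTiti6,Hussein} (estimate (\ref{Global-v})), where the hypotheses $\partial_{p}v_{0}\in L^{m}$, $m>2$, and $v_{0}\in L^{\infty}$ are used to render the closing differential inequality effectively linear (up to logarithms) in $\|\partial_{p}v\|_{L^{m}}^{m}$, so that an ordinary (or logarithmic) Gronwall argument applies with no finite-time obstruction. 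Your proposal needs to either reproduce that linearization or cite it; as written, Bihari--LaSalle cannot deliver the global bound. Two smaller points: in the uniqueness step the uniform $L^{\infty}$ bounds on $T,q_{v},q_{c},q_{r}$ (obtained in the paper by Stampacchia/De Giorgi, Lemma \ref{unform boundness}) are indispensable for estimating the transported differences such as $\int_{\mathcal{M}}[\hat{v}\cdot\nabla Q_{2}+\hat{w}\partial_{p}Q_{2}]\hat{Q}\,d\mathcal{M}$, not just nonnegativity; and only an $L^{2}$ (not $H^{1}$) difference estimate for $v$ is needed, provided one uses a weighted functional of the form $\Psi(t)=\delta^{2}(\cdots)+\|\hat{v}\|_{L^{2}}^{2}$ so the $\delta^{-2}\|\nabla\hat{v}\|_{L^{2}}^{2}$ contributions from the scalar equations are absorbed by the dissipation in the $\hat{v}$ equation.
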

\begin{theorem}\label{strong existence global}
Let $v_{0}\in \mathbb{V}\cap L^{\infty}$, $\partial_{p}v_{0}\in L^{m}$ for some $m>2$, $\theta_{0},q_{v},q_{c},q_{r}\in H^{1}$, and
 $\|v_{0}\|_{H^{1}}+\|\partial_{p}v_{0}\|_{L^{m}}+
\|v_{0}\|_{L^{\infty}}<\infty$. Then equations (\ref{e1})-(\ref{e7}) associated with the initial data (\ref{initial condition}) and boundary condition (\ref{boundary condition}) has at least one global in time strong solution $(v,\theta,q_{v},q_{c},q_{r})$. When $\theta_{0}, q_{v0},q_{r0},q_{c0}\geq 0$ in $\mathcal{M}$ and $F$ is replaced by its positive part $F^{+}$, the solution is unique.
  \end{theorem}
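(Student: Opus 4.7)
The plan is to upgrade the global quasi-strong solution produced by Theorem~\ref{quasistrong existence global} to a global strong solution under the stronger regularity $\theta_0,q_{v0},q_{c0},q_{r0}\in H^1$. I would work on the same truncated/Galerkin approximation system introduced in Section~3, on which the quasi-strong a priori bounds are already uniform, and add a second layer of $H^1$-level energy estimates for $\theta,q_v,q_c,q_r$. Once these enhanced bounds are seen to be uniform in the approximation parameter and independent of $t_1$, compactness together with the usual weak/weak-$*$ limiting argument produces the strong solution globally. Concretely, I would pair the $\theta$-equation with $\mathcal{A}_\theta\theta$ (and each $q_j$-equation with $\mathcal{A}_{q_j}q_j$), so that the dissipation yields $\|\mathcal{A}_\theta\theta\|_{L^2}^2$ and, via elliptic regularity together with the inhomogeneous boundary data in (\ref{boundary condition}), control on $\|\theta\|_{H^2}$ and $\|q_j\|_{H^2}$. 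The time derivatives $\partial_t\theta,\partial_t q_j\in L^2(0,t_1;L^2)$ are then read off from the equations, while $\partial_t v\in L^2(0,t_1;\mathbb{H})$ is already inherited from Theorem~\ref{quasistrong existence global}.

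The critical nonlinearities are $(v\cdot\nabla\theta,\mathcal{A}_\theta\theta)$ and $(w\partial_p\theta,\mathcal{A}_\theta\theta)$, together with the analogues for each $q_j$. Because the momentum equation carries no vertical viscosity, the quantity $\|\partial_p^2 v\|_{L^2}$ is not at our disposal; I would therefore rewrite $w=-\int_{p_0}^{p}\nabla\cdot v\,dp'$ and apply the anisotropic Ladyzhenskaya-type inequalities developed in \cite{CaoTiti6,CLT2020} to produce bounds that involve only $\|v\|_{L^\infty}$, $\|\nabla v\|_{L^2}$, $\|\Delta v\|_{L^2}$ and $\|\nabla\partial_p v\|_{L^2}$, all of which are provided by Theorem~\ref{quasistrong existence global}. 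The resulting terms can be absorbed into the dissipation after Cauchy–Schwarz and Young, leaving a Gr\"onwall inequality driven by quantities that are known to be integrable in time. The multi-valued Heaviside source $w^-\tilde F\,\mathcal{H}(q_v-q_{vs})$ presents no additional difficulty at this stage: once the measurable selection $h_q$ from Definition~2.1 is fixed, it is bounded in $L^\infty$, so the source enters the $H^1$ estimate as an $L^2$ forcing. The main obstacle here is precisely the anisotropic splitting of $w\partial_p\theta$ against $\partial_p^2\theta$ in the absence of $\partial_p^2 v$; this is where the Cao–Li–Titi-style inequalities must be used most delicately, and it is also what makes the scheme bottom-heavy in calculation even though it is conceptually a linear bootstrap over the known $v$.

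For uniqueness, under $F\to F^+$ and nonnegative initial data, let $(v_i,\theta_i,q_{v,i},q_{c,i},q_{r,i})$, $i=1,2$, be two strong solutions and set $Q=(q_{v,1}-q_{v,2})+(q_{c,1}-q_{c,2})$ and $H=(\theta_1-\theta_2)+\frac{L}{c_p\Pi}(q_{v,1}-q_{v,2})$. By the structure of (\ref{e4})–(\ref{e6}) the multi-valued Heaviside contributions appear with opposite signs in the $q_v$- and $q_c$-equations and with matching prefactor $L/(c_p\Pi)$ between $\theta$ and $q_v$, hence they cancel identically in the equations for $Q$ and $H$, leaving standard linear advection–diffusion difference equations amenable to a Gr\"onwall estimate. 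For the residual $q_v$-difference equation, the variational inequality in Definition~2.1 yields the monotonicity $\langle h_q^{(1)}-h_q^{(2)},q_{v,1}-q_{v,2}\rangle\ge 0$, so the remaining Heaviside term enters with the favourable sign and can be dropped. Combining these facts with the difference bounds on $v_1-v_2$ coming from Theorem~\ref{quasistrong existence global}, and noting that nonnegativity of the solutions (guaranteed by $F^+$ together with nonnegative data) is what makes these monotonicity arguments consistent, closes the uniqueness in a single Gr\"onwall step.
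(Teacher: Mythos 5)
Your existence argument follows the paper's route in all essentials: the same approximation scheme, $H^{1}$-level energy estimates for $\theta,q_{v},q_{c},q_{r}$ over the already-controlled velocity, and the Cao--Li--Titi anisotropic inequalities (Lemma~\ref{trilinear term lemma}) to avoid $\|\partial_{p}^{2}v\|_{L^{2}}$; the paper tests against $-\Delta q_{v}$ and $-\partial_{p}^{2}q_{v}$ separately rather than against $\mathcal{A}_{q_{v}}q_{v}$, but that is a cosmetic difference. The gap is in your uniqueness argument. After passing to $Q$ and $H$ the Heaviside terms do cancel, but in the residual $\hat{q}_{v}$-equation the difference of the saturation sources is
$-w_{1}^{-}F^{+}(T_{1})h(q_{v1}-q_{vs})+w_{2}^{-}F^{+}(T_{2})h(q_{v2}-q_{vs})$,
and this is \emph{not} a single monotone term that "enters with the favourable sign and can be dropped." Only the piece with frozen coefficient, $w_{2}^{-}F^{+}(T_{2})\bigl[h(q_{v2}-q_{vs})-h(q_{v1}-q_{vs})\bigr]\hat{q}_{v}$, is sign-definite (and only because $w_{2}^{-}\ge 0$ and $F^{+}\ge 0$). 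The remaining two pieces, one proportional to $w_{1}^{-}-w_{2}^{-}$ and one proportional to $F^{+}(T_{1})-F^{+}(T_{2})$, are not signed and must be estimated: the first costs a fraction of $\|\nabla\hat{v}\|_{L^{2}}^{2}$, and the second requires the Lipschitz continuity of $F$ in $T$ together with a product estimate of the form $\|w_{2}^{-}\|_{L^{2}}\|F^{+}(T_{2})-F^{+}(T_{1})\|_{L^{6}}\|\hat{q}_{v}\|_{L^{3}}$, which consumes part of the $H^{1}$ dissipation of $\hat{q}_{v}$ and $\hat{H}$ and forces the weighted Gronwall functional $\delta^{2}(\|\hat{q}_{v}\|_{L^{2}}^{2}+\|\hat{q}_{r}\|_{L^{2}}^{2}+\|\hat{H}\|_{L^{2}}^{2}+\|\hat{Q}\|_{L^{2}}^{2})+\|\hat{v}\|_{L^{2}}^{2}$ used in the paper. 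Omitting these cross terms is the step that would fail.

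Two further points. First, you invoke "difference bounds on $v_{1}-v_{2}$ coming from Theorem~\ref{quasistrong existence global}," but no such bound is available independently: the $\hat{v}$-equation is forced by $\nabla\int_{p}^{p_{1}}\frac{R}{p'}\hat{T}\,dp'$, so the velocity difference estimate must be carried out inside the same coupled Gronwall loop as $\hat{H},\hat{Q},\hat{q}_{v},\hat{q}_{r}$; treating it as given is circular. Second, the nonnegativity hypothesis is not merely a consistency condition for monotonicity: the uniform $L^{\infty}$ bounds of Lemma~\ref{unform boundness} are what allow the advection differences $\int_{\mathcal{M}}(\hat{v}\cdot\nabla Q_{2}+\hat{w}\partial_{p}Q_{2})\hat{Q}\,d\mathcal{M}$ to be handled by integration by parts against $\|Q_{2}\|_{L^{\infty}}$, since $\hat{w}$ is controlled only through $\|\nabla\hat{v}\|_{L^{2}}$; without this step even the "standard linear advection--diffusion" part of your Gronwall estimate does not close at the quasi-strong regularity level.
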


   Proofs of theorems are naturally divided into two parts: existence and uniqueness, which will be given in Section 3 and Section 4 respectively.

\section{Existence of quasi-strong and strong solutions}
In this section, we mainly consider the existence of solutions. We first propose a regularized approximated system that approximates equations (\ref{e1})-(\ref{e7}) in a suitable sense. By finding uniform a priori bounds for solutions and taking limit of approximated solutions, we get the existence of quasi-strong and strong solutions.
\subsection{An approximated problem }
In order to deal with the differential inclusion, as in \cite{Zelati,Temam-wu}, we select a single-valued Heaviside function $h_{q_{v}}\in \mathcal{H}(q_{v}-q_{vs})$ satisfying that
\begin{align}\label{variational inequality}
([\tilde{q}_{v}-q_{vs}]^{+},1)-([q_{v}-q_{vs}]^{+},1)\geq \langle h_{q_{v}},\tilde{q}_{v}-q_{v}\rangle, \ \text{a.e.}\ t\in[0,t_{1}],\forall \tilde{q}_{v}\in H^{1}.
\end{align}
As in \cite{Cao1,Zelati,Zelati3}, we define
\begin{eqnarray}\label{H-epsilon2}
\mathcal{H}_{\epsilon_{2}}(r)=
\begin{cases}
0,       & r\leq 0, \\
r/\epsilon_{2},   & r\in (0,\epsilon_{2}], \\
1, & r>\epsilon_{2}.
\end{cases}
\end{eqnarray}
Thus $\mathcal{H}_{\epsilon_{2}}$ can overcome the discontinuity caused by the Heaviside function $h_{q_{v}}$.
Additionally,
\begin{align}\label{H-epsilon21}
|\mathcal{H}_{\epsilon_{2}}(r)|\leq 1,\qquad |\mathcal{H}_{\epsilon_{2}}(r_{1})-\mathcal{H}_{\epsilon_{2}}(r_{2})|\leq\frac{1}{\epsilon_{2}}|r_{1}-r_{2}|,
\quad\forall r_{1}, r_{2}\in \mathbb{R}.
\end{align}

In this section we consider the following approximated problem:
\begin{equation}\label{ae1}
\partial_{t}v^{\epsilon}-\Delta v^{\epsilon}-\epsilon_{1}\partial_{p}\left(\left(\frac{gp}{R\bar{\theta}}\right)^{2}\partial_
{p}v^{\epsilon}\right)+(v^{\epsilon}\cdot\nabla)v^{\epsilon}+w^{\epsilon}\partial_{p}v^{\epsilon}+\nabla\Phi^{\epsilon}
+f{v^{\epsilon}}^{\bot}=0,
\end{equation}
\begin{equation}\label{ae2}
\partial_{t}\theta^{\epsilon}+\mathcal{A}_{\theta}\theta^{\epsilon}+v^{\epsilon}\cdot\nabla\theta^{\epsilon}
+w^{\epsilon}\partial_{p}\theta^{\epsilon}= f_{\theta^{\epsilon}}+\frac{L}{c_{p}\Pi}{w^{\epsilon}}^{-}\tilde{F}\mathcal{H}_{\epsilon_{2}},
\end{equation}
\begin{equation}\label{ae3}
\partial_{t}q_{v}^{\epsilon}+\mathcal{A}_{q_{v}}q_{v}^{\epsilon}+v^{\epsilon}\cdot\nabla q_{v}^{\epsilon}+w^{\epsilon}\partial_{p}q_{v}^{\epsilon}
= f_{q_{v}^{\epsilon}}-{w^{\epsilon}}^{-}F\mathcal{H}_{\epsilon_{2}},
\end{equation}
\begin{equation}\label{ae4}
\partial_{t}q_{c}^{\epsilon}+\mathcal{A}_{q_{c}}q_{c}^{\epsilon}+v^{\epsilon}\cdot\nabla q_{c}^{\epsilon}+w^{\epsilon}\partial_{p}q_{c}^{\epsilon}= f_{q_{c}^{\epsilon}}+{w^{\epsilon}}^{-}F\mathcal{H}_{\epsilon_{2}},
\end{equation}
\begin{equation}\label{ae5}
\partial_{t}q_{r}^{\epsilon}+\mathcal{A}_{q_{r}}q_{r}^{\epsilon}+v^{\epsilon}\cdot\nabla q_{r}^{\epsilon}+w^{\epsilon}\partial_{p}q_{r}^{\epsilon}= f_{q_{r}^{\epsilon}}.
\end{equation}
In addition, we still supplement the approximated system with boundary condition (\ref{boundary condition}) and initial condition
(\ref{initial condition}).

 Obviously, through similar argument as in \cite{TanLiu}, we can have the following well-posedness result:
 \begin{proposition}
 For any given $\epsilon=(\epsilon_{1},\epsilon_{2})$, and given $h_{q}\in L^{\infty}(\mathcal{M}\times (0,t_{1}))$ satisfying (\ref{variational inequality}),
 \begin{align*}
&(i)\ \text{If}\ v_{0}\in\mathbb{V},\theta_{0}, q_{v0},q_{r0},q_{c0}\in L^{2},\ \text{then equations (\ref{ae1})-(\ref{ae5})}\ \text{with boundary condition (\ref{boundary condition}) and}\\ \
 &\text{ initial condition
(\ref{initial condition})}\ \text{has at least one global in time quasi-strong solution }
 (v^{\epsilon},\theta^{\epsilon},q_{v}^{\epsilon},q_{c}^{\epsilon},q_{r}^{\epsilon}). \\
&(ii)\ \text{If}\  v_{0}\in\mathbb{V},\theta_{0}, q_{v0},q_{r0},q_{c0}\in H^{1},\
 \text{then there exists at least one global in time strong solution}\ \\
   &(v^{\epsilon},\theta^{\epsilon},q_{v}^{\epsilon},q_{c}^{\epsilon},q_{r}^{\epsilon})\  \text{to equations (\ref{ae1})-(\ref{ae5})}\ \text{with conditions (\ref{boundary condition}) and (\ref{initial condition})}\\ \
  \end{align*}
 \end{proposition}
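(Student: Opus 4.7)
The plan is to follow the Galerkin-based construction from the authors' earlier work \cite{TanLiu}, which applies here almost verbatim once one observes the two simplifications built into the approximated system (\ref{ae1})--(\ref{ae5}): the $\epsilon_{1}$-regularization restores full ellipticity in the momentum equation so that the classical primitive-equations machinery is available, and the Lipschitz truncation $\mathcal{H}_{\epsilon_{2}}$ removes the multi-valuedness of the Heaviside coupling. Consequently the right-hand sides become Lipschitz in the unknowns for each fixed $\epsilon=(\epsilon_{1},\epsilon_{2})$, and the problem falls into a standard parabolic framework.

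First, I would set up a Galerkin scheme using eigenfunctions of a Stokes-type operator adapted to $v^{\epsilon}$ and of the Robin-type elliptic operators $\mathcal{A}_{\theta}, \mathcal{A}_{q_{j}}$ compatible with the boundary conditions (\ref{boundary condition}). The resulting finite-dimensional system is a locally Lipschitz ODE in time and therefore admits a local solution defined up to its first blow-up.

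Next, I would derive uniform a priori estimates. Testing (\ref{ae1})--(\ref{ae5}) against the corresponding unknowns and exploiting the cancellation $((v\cdot\nabla)f,f)+(w\partial_{p}f,f)=0$, which follows from the incompressibility of the extended velocity $(v^{\epsilon},w^{\epsilon})$, yields $L^{2}$-energy estimates; the source terms are controlled because $F,\tilde{F},\mathcal{H}_{\epsilon_{2}}$ are uniformly bounded (cf.~(\ref{H-epsilon21})) and the nonlinearities $f_{\theta},f_{q_{j}}$ grow at most linearly in the unknowns, while the Robin data contribute harmless boundary integrals via the trace inequality. A Gronwall argument then produces uniform bounds in $L^{\infty}(0,t_{1};L^{2})\cap L^{2}(0,t_{1};H^{1})$, precluding finite-time blow-up and establishing part~(i). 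For part~(ii), I would upgrade these to $H^{1}$ estimates on $(\theta^{\epsilon},q_{v}^{\epsilon},q_{c}^{\epsilon},q_{r}^{\epsilon})$ by testing the scalar equations against $-\Delta$ of each unknown (with appropriate adjustments for the Robin boundary terms), again using the full-viscosity regularity of $v^{\epsilon}$ to tame the convective couplings.

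Finally, an Aubin--Lions compactness argument allows me to pass to the Galerkin limit, and the variational inequality (\ref{variational inequality}) for the limiting single-valued selection $h_{q_{v}}$ of the Heaviside multifunction is preserved by standard monotonicity of the subdifferential of $q_{v}\mapsto([q_{v}-q_{vs}]^{+},1)$. The main technical obstacle, inherited from \cite{TanLiu}, is controlling the vertical convective coupling through $w^{\epsilon}(x,y,p,t)=-\int_{p_{0}}^{p}\nabla\cdot v^{\epsilon}(x,y,p',t)\,dp'$, whose regularity is tied entirely to that of $\nabla v^{\epsilon}$; this is precisely where the full-viscosity structure provided by the $\epsilon_{1}$-term is essential, since it furnishes both $\Delta v^{\epsilon}$ and $\partial_{p}^{2}v^{\epsilon}$ in $L^{2}(0,t_{1};L^{2})$. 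Once these estimates are in place, the argument closes exactly as in \cite{TanLiu}.
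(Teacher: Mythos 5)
Your proposal is correct and follows essentially the same route as the paper, which disposes of this proposition in one line by invoking the argument of \cite{TanLiu}: for fixed $\epsilon$ the $\epsilon_{1}$-term restores full viscosity in the momentum equation and $\mathcal{H}_{\epsilon_{2}}$ renders the source terms Lipschitz, so the standard Galerkin construction with $L^{2}$ (resp.\ $H^{1}$) energy estimates and Aubin--Lions compactness applies. Your sketch is an accurate expansion of exactly that citation, and the points you flag (control of $w^{\epsilon}$ through $\nabla\cdot v^{\epsilon}$, preservation of the variational inequality by monotonicity) are the right ones.
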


In subsequent subsections, we focus on finding uniform a priori bounds for solutions respect to $\epsilon$.
\subsection{A priori $L^{2}$-estimates}
 The following two lemmas are useful in the estimation of trilinear terms.
\begin{lemma}\label{HHP}(\cite[Lemma 6.2]{Hussein})
There exists a constant $C>0$ such that
\begin{align}
\int_{\mathcal{M}}|f||g||h|d\mathcal{M}\leq C\|\nabla f\|_{L^{2}}^{\frac{1}{2}}
\| f\|_{L^{2}}^{\frac{1}{2}}\|\nabla g\|_{L^{2}}^{\frac{1}{2}}
\| g\|_{L^{2}}^{\frac{1}{2}}\left(\|h\|_{L^{2}}^{\frac{1}{2}}
\| \partial_{p}h\|_{L^{2}}^{\frac{1}{2}}+ \|h\|_{L^{2}}\right)
\end{align}
for $f,g\in L^{2}(p_{0},p_{1};H_{0}^{1}(G)),h\in H^{1}(p_{0},p_{1};L^{2}(G))$.
\end{lemma}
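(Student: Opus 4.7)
The plan is to prove this anisotropic trilinear estimate by splitting the domain $\mathcal{M}=G\times(p_0,p_1)$ and treating $f,g$ with a $2$D Ladyzhenskaya-type inequality in the horizontal slice while $h$ is controlled in the vertical direction via a $1$D Sobolev/Agmon inequality together with an averaging trick. The roles of $f,g$ (which vanish on $\partial G$) and $h$ (which has only $L^2$ horizontal regularity but $H^1$ vertical regularity) are genuinely different, and the estimate is engineered to match those regularities.

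First I would apply Fubini in $p$ and Hölder in $G$ slice by slice:
\begin{align*}
\int_{\mathcal M}|f\,g\,h|\,d\mathcal M
\le \int_{p_0}^{p_1}\|f(\cdot,p)\|_{L^4(G)}\,\|g(\cdot,p)\|_{L^4(G)}\,\|h(\cdot,p)\|_{L^2(G)}\,dp.
\end{align*}
On each slice $f(\cdot,p),g(\cdot,p)\in H_0^1(G)$, so the $2$D Ladyzhenskaya/Gagliardo--Nirenberg inequality gives
\begin{align*}
\|f(\cdot,p)\|_{L^4(G)}\le C\|f(\cdot,p)\|_{L^2(G)}^{1/2}\|\nabla f(\cdot,p)\|_{L^2(G)}^{1/2},
\end{align*}
and the same for $g$. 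Next I would pull the $h$ factor out in $L^\infty_p L^2_G$:
\begin{align*}
\int_{p_0}^{p_1}\|f\|_{L^2_G}^{1/2}\|\nabla f\|_{L^2_G}^{1/2}\|g\|_{L^2_G}^{1/2}\|\nabla g\|_{L^2_G}^{1/2}\,dp
\le \|f\|_{L^2(\mathcal M)}^{1/2}\|\nabla f\|_{L^2(\mathcal M)}^{1/2}\|g\|_{L^2(\mathcal M)}^{1/2}\|\nabla g\|_{L^2(\mathcal M)}^{1/2},
\end{align*}
by two applications of the Cauchy--Schwarz inequality in $p$.

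The remaining work, and the slightly delicate step, is to control $\sup_{p}\|h(\cdot,p)\|_{L^2(G)}$ by the right-hand side factor $\|h\|_{L^2}^{1/2}\|\partial_p h\|_{L^2}^{1/2}+\|h\|_{L^2}$. To do this I would use the identity, for any $p,p_*\in(p_0,p_1)$,
\begin{align*}
h(x,y,p)^2=h(x,y,p_*)^2+2\int_{p_*}^{p}h(x,y,p')\,\partial_{p'}h(x,y,p')\,dp',
\end{align*}
integrate in $p_*$ over $(p_0,p_1)$, divide by $p_1-p_0$, integrate in $(x,y)\in G$, and apply Cauchy--Schwarz to obtain
\begin{align*}
\|h(\cdot,p)\|_{L^2(G)}^2\le \frac{1}{p_1-p_0}\|h\|_{L^2(\mathcal M)}^2+2\|h\|_{L^2(\mathcal M)}\|\partial_p h\|_{L^2(\mathcal M)},
\end{align*}
uniformly in $p$. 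Taking square roots gives exactly
\begin{align*}
\sup_{p\in(p_0,p_1)}\|h(\cdot,p)\|_{L^2(G)}\le C\bigl(\|h\|_{L^2(\mathcal M)}^{1/2}\|\partial_p h\|_{L^2(\mathcal M)}^{1/2}+\|h\|_{L^2(\mathcal M)}\bigr),
\end{align*}
and combining with the previous inequality yields the claimed bound.

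The main obstacle I anticipate is the additive $\|h\|_{L^2}$ term in the vertical factor: a naive $1$D Agmon embedding only produces the interpolation term $\|h\|_{L^2_p}^{1/2}\|\partial_p h\|_{L^2_p}^{1/2}$, and one must be careful that the constant does not blow up with the height of the slab $p_1-p_0$. The averaging trick above (rather than evaluating at a single boundary $p_*$) is what forces the clean additive correction and keeps the constant independent of the pointwise values of $h$, which is the reason the $+\|h\|_{L^2}$ term appears in the final estimate.
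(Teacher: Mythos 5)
Your proof is correct. The paper itself offers no argument for this lemma --- it is quoted directly from \cite[Lemma 6.2]{Hussein} --- and your derivation (slice-wise H\"older with exponents $4,4,2$, the two-dimensional Ladyzhenskaya inequality for $f,g\in H_0^1(G)$, Cauchy--Schwarz in $p$, and the averaged one-dimensional Agmon bound $\sup_p\|h(\cdot,p)\|_{L^2(G)}\leq C(\|h\|_{L^2}^{1/2}\|\partial_p h\|_{L^2}^{1/2}+\|h\|_{L^2})$) is precisely the standard route by which such anisotropic trilinear estimates are established, including the correct explanation of where the additive $\|h\|_{L^{2}}$ term comes from.
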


\begin{lemma}(\cite[Lemma 2.1]{CaoTiti6})\label{trilinear term lemma}
For any $f,g,h \in H^1$, the following inequality holds true:
\begin{align*}
 \int_{\mathcal{M}'} \int_{p_{0}}^{p_{1}} fdp\int_{p_{0}}^{p_{1}} ghdpd\mathcal{M}'
\leq& {\rm{min}}\left\{C \|f\|_{L^2}^{\frac{1}{2}}\left(\|f\|_{L^{2}}^{\frac{1}{2}} + \|\nabla f\|_{L^{2}}^{\frac{1}{2}}\right)
\|g\|_{L^{2}} \|h\|_{L^{2}}^{\frac{1}{2}}\left(\|h\|_{L^{2}}^{\frac{1}{2}}+ \|\nabla h\|_{L^{2}}^{\frac{1}{2}}\right),\right.\\
&\left.C\|f\|_{L^2}\|g\|_{L^{2}}^{\frac{1}{2}}
\left(\|g\|_{L^{2}}^{\frac{1}{2}} + \|\nabla g\|_{L^{2}}^{\frac{1}{2}}\right) \|h\|_{L^{2}}^{\frac{1}{2}}\left(\|h\|_{L^{2}}^{\frac{1}{2}}+ \|\nabla h\|_{L^{2}}^{\frac{1}{2}}\right)\right\}.
\end{align*}
\end{lemma}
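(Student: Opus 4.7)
The plan is to reduce the three-dimensional integral on the left-hand side to a two-dimensional one by integrating in the vertical variable $p$ first, and then apply a Ladyzhenskaya-type inequality on the horizontal domain $\mathcal{M}'$. Introduce the auxiliary two-dimensional functions $F(x,y) := \int_{p_0}^{p_1} f(x,y,p)\,dp$ and $G(x,y) := \int_{p_0}^{p_1} g(x,y,p) h(x,y,p)\,dp$, so that the left-hand side equals $\int_{\mathcal{M}'} F(x,y)\, G(x,y)\,dxdy$. Cauchy--Schwarz in $p$ gives the pointwise bound $|G(x,y)| \leq \tilde g(x,y)\,\tilde h(x,y)$, where I write $\tilde u(x,y) := \bigl(\int_{p_0}^{p_1} u^2\,dp\bigr)^{1/2}$ for a generic $u$ on $\mathcal{M}$, while Minkowski's inequality yields $\|F\|_{L^2_{xy}} \leq C\|f\|_{L^2(\mathcal{M})}$ and $\|\nabla F\|_{L^2_{xy}} \leq C\|\nabla f\|_{L^2(\mathcal{M})}$.

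The central auxiliary estimate I would establish first is that, for every $u \in H^1(\mathcal{M})$,
\begin{align*}
\|\tilde u\|_{L^4(\mathcal{M}')} \leq C\|u\|_{L^2(\mathcal{M})}^{1/2}\bigl(\|u\|_{L^2(\mathcal{M})}^{1/2} + \|\nabla u\|_{L^2(\mathcal{M})}^{1/2}\bigr).
\end{align*}
Indeed, the two-dimensional Ladyzhenskaya inequality $\|\tilde u\|_{L^4}^2 \leq C\|\tilde u\|_{L^2}\bigl(\|\tilde u\|_{L^2} + \|\nabla \tilde u\|_{L^2}\bigr)$ applies on $\mathcal{M}'$; differentiating $\tilde u^2$ under the integral sign and applying Cauchy--Schwarz in $p$ produces the pointwise inequality $|\nabla \tilde u(x,y)|^2 \leq \int_{p_0}^{p_1} |\nabla u|^2\,dp$, whence $\|\nabla \tilde u\|_{L^2_{xy}} \leq \|\nabla u\|_{L^2(\mathcal{M})}$; combined with the identity $\|\tilde u\|_{L^2_{xy}} = \|u\|_{L^2(\mathcal{M})}$, this gives the claimed $L^4$ bound.

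With this key estimate in hand, the two bounds in the minimum follow by distributing $F$ and $G$ differently across $\mathcal{M}'$. For the first bound, Hölder with exponents $(4, 4/3)$ followed by Hölder $(2,4)$ applied to the pointwise bound $G \leq \tilde g\,\tilde h$ yields
\begin{align*}
\int_{\mathcal{M}'} FG\,dxdy \leq \|F\|_{L^4_{xy}}\|\tilde g\|_{L^2_{xy}}\|\tilde h\|_{L^4_{xy}},
\end{align*}
and applying the key $L^4$ estimate to $F$ and to $\tilde h$, together with $\|\tilde g\|_{L^2_{xy}} = \|g\|_{L^2(\mathcal{M})}$, gives the first bound. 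For the second, Cauchy--Schwarz on $\mathcal{M}'$ followed by Hölder $(4,4)$ gives $\int FG \leq \|F\|_{L^2_{xy}}\|\tilde g\|_{L^4_{xy}}\|\tilde h\|_{L^4_{xy}}$, and the key estimate applied to both $\tilde g$ and $\tilde h$ concludes. The only real technical obstacle is the pointwise control of $\nabla \tilde u$, which formally involves $1/\tilde u$ at zeros of $\tilde u$; this is handled cleanly by working with the regularization $\bigl(\epsilon + \int u^2\,dp\bigr)^{1/2}$ and passing $\epsilon \to 0^+$, after which Cauchy--Schwarz in $p$ eliminates the singular denominator. Everything else is careful bookkeeping of Hölder exponents.
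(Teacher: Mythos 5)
Your proof is correct. The paper itself does not prove this lemma but simply quotes it from \cite[Lemma 2.1]{CaoTiti6}, and your argument --- Cauchy--Schwarz and Minkowski in the vertical variable to reduce to the two-dimensional functions $F$ and $\tilde u$, the Ladyzhenskaya/Gagliardo--Nirenberg inequality $\|\tilde u\|_{L^4(\mathcal{M}')}^2\leq C\|\tilde u\|_{L^2}(\|\tilde u\|_{L^2}+\|\nabla\tilde u\|_{L^2})$ together with the pointwise bound $|\nabla\tilde u|\leq(\int_{p_0}^{p_1}|\nabla u|^2dp)^{1/2}$, and the two different H\"older splittings to produce the two terms in the minimum --- is essentially the standard proof given in that reference, including the correct handling of the gradient of $\tilde u$ at its zeros by regularization.
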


Next we introduce the generalized Bihari-Lasalle inequality, which is useful during the proof of local existence of solutions.
\begin{lemma}(\cite[Theorem 28]{Dragomir})\label{nonlinearGronwall}
Let $\beta(t):[t_{0},\infty)\rightarrow [0,\infty)$ be continuous function,
$f(t)$ be nonnegative differentiable function, $\alpha(t)$ be nonnegative and nonincreasing differentiable functions, $g(s):[0,\infty)\rightarrow (0,\infty)$ be strictly increasing function, $u(t):[t_{0},\infty)\rightarrow [0,\infty)$ be continuous function. Suppose that
\begin{align}
u(t)\leq f(t)+\alpha(t)\int_{t_{0}}^{t}\beta(s)g(u(s))ds
\end{align}
and $f'(t)\left[\frac{1}{g(\gamma(t))}-1\right]\leq 0 $ on $[t_{0},\infty)$ for every nonnegative continuous function $\gamma$. Then
\begin{align}\label{nonlinearGronwall1}
u(t)\leq \mathcal{G}^{-1}\{\mathcal{G}(f(t_{0}))
+\int_{t_{0}}^{t}[\alpha(s)\beta(s)+f'(s)]ds\}
\end{align}
where
\begin{align}
\mathcal{G}(r)=\int_{r_{0}}^{r}\frac{1}{g(s)}ds,r\geq r_{0}\geq0,
\end{align}
and (\ref{nonlinearGronwall1}) holds for all values of $t$ for which the function
\begin{align}
r(t)=\mathcal{G}[f(t_{0})]+\int_{t_{0}}^{t}[\alpha(s)\beta(s)+f'(s)]ds
\end{align}
belongs to the domain of the inverse function $\mathcal{G}^{-1}$.
\end{lemma}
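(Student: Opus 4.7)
The plan is to follow the classical comparison-function strategy that underlies all Bihari--LaSalle type generalizations of Gronwall. First I would introduce the majorant
$$z(t):=f(t)+\alpha(t)\int_{t_{0}}^{t}\beta(s)g(u(s))ds,$$
so that $u(t)\le z(t)$ by hypothesis. Because $g$ is strictly increasing, $g(u(t))\le g(z(t))$. Differentiating $z$, and using that $\alpha$ is nonnegative and nonincreasing (hence $\alpha'(t)\le 0$) while the integral factor is nonnegative, the cross term $\alpha'(t)\int_{t_{0}}^{t}\beta g(u)ds$ is nonpositive, so
$$z'(t)\le f'(t)+\alpha(t)\beta(t)g(z(t)).$$

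Next I would divide by $g(z(t))>0$ to bring the expression into the form of an exact derivative of $\mathcal{G}$. This gives
$$\frac{d}{dt}\mathcal{G}(z(t))=\frac{z'(t)}{g(z(t))}\le \frac{f'(t)}{g(z(t))}+\alpha(t)\beta(t).$$
At this point the structural hypothesis on $f'$ enters: taking $\gamma=z$ (which is admissible since $z$ is nonnegative and continuous), the condition $f'(t)\bigl[1/g(\gamma(t))-1\bigr]\le 0$ translates into $f'(t)/g(z(t))\le f'(t)$, so the right-hand side is bounded above by $f'(t)+\alpha(t)\beta(t)$. Integrating from $t_{0}$ to $t$ and using $z(t_{0})=f(t_{0})$ yields
$$\mathcal{G}(z(t))\le \mathcal{G}(f(t_{0}))+\int_{t_{0}}^{t}[f'(s)+\alpha(s)\beta(s)]ds,$$
after which inverting $\mathcal{G}$ (which is well defined and increasing since $g>0$) and appealing to $u\le z$ delivers the claimed estimate, valid precisely on the set of times where the right-hand side remains in the domain of $\mathcal{G}^{-1}$.

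The step I expect to be most delicate is the use of the abstract sign condition on $f'$. One has to be careful that the inequality $f'(t)/g(\gamma(t))\le f'(t)$ really does replace the nonexplicit term $f'(t)/g(z(t))$ pointwise in $t$; the ``for every nonnegative continuous $\gamma$'' formulation is precisely what makes the specialization $\gamma=z$ legitimate without having to verify additional regularity of the majorant. A secondary technical point is tracking the domain restriction for $\mathcal{G}^{-1}$: since $g$ need not be globally integrable near infinity, the argument only yields a bound for as long as $\mathcal{G}(f(t_{0}))+\int_{t_{0}}^{t}[\alpha\beta+f']ds$ lies in the range of $\mathcal{G}$, which is exactly the proviso recorded in the statement. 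Once these two points are handled, the remainder of the argument is a routine differentiation-and-integration calculation.
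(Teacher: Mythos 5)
The paper does not prove this lemma at all: it is quoted verbatim from Dragomir's monograph (Theorem 28 there) and used as a black box in the proof of Lemma 3.4, so there is no in-paper argument to compare yours against. Judged on its own, your proof is correct and is the standard derivation of the Bihari--LaSalle inequality adapted to a nonincreasing weight $\alpha$: the majorant $z$, the discard of the nonpositive term $\alpha'(t)\int_{t_{0}}^{t}\beta g(u)\,ds$, the monotonicity of $g$ to pass from $g(u)$ to $g(z)$, the division by $g(z)$ to produce $\frac{d}{dt}\mathcal{G}(z)$, and the specialization $\gamma=z$ in the sign condition on $f'$ are exactly the ingredients one expects, and each step is used in the right direction. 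Your reading of the hypothesis $f'(t)\bigl[1/g(\gamma(t))-1\bigr]\le 0$ as yielding $f'(t)/g(z(t))\le f'(t)$ pointwise is the correct way to eliminate the otherwise intractable term. Two small points worth making explicit if you were to write this out in full: (i) the differentiability of $z$ requires that $s\mapsto g(u(s))$ be continuous, which holds only if $g$ is assumed continuous (monotonicity alone does not give this); this is implicit in Dragomir's setting and harmless here since the paper applies the lemma with $g(u)=1+u^{2}+u^{4}$; and (ii) the final inversion step uses that $\mathcal{G}$ is strictly increasing (because $1/g>0$), so $\mathcal{G}^{-1}$ is well defined and order-preserving on its domain, which is what justifies passing from $\mathcal{G}(z(t))\le r(t)$ to $z(t)\le\mathcal{G}^{-1}(r(t))$ exactly on the set of $t$ where $r(t)$ lies in the range of $\mathcal{G}$.
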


Fix $\epsilon>0$, by virtue of the similar $L^{2}$ a priori estimates carried out in \cite{Cao1,Zelati}, we can immediately conclude the following result:
\begin{lemma}\label{l2prior}
Let $\epsilon>0$, $t_{1}>0$, $\theta_{0},q_{v0},q_{c0},q_{r0}\in L^{2},v_{0}\in\mathbb{H}$. Then there exists at least one solution $(v^{\epsilon}, \theta^{\epsilon}, q_{v}^{\epsilon}, q_{c}^{\epsilon}, q_{r}^{\epsilon})$ to equations
(\ref{ae1})-(\ref{ae5}) with initial data (\ref{initial condition}) and boundary condition (\ref{boundary condition}). Moreover
\begin{align}\label{L2-v-theta-qj}
 \sup_{t\in [0,t_{1}]}\|(v^{\epsilon},\theta^{\epsilon},q_{j}^{\epsilon})\|_{L^{2}}^{2}
 +\int_{0}^{t_{1}}\|(\nabla v^{\epsilon},\nabla\theta^{\epsilon},\nabla q_{j}^{\epsilon})\|_{L^{2}}^{2}+\|(\epsilon\partial_{p}v^{\epsilon},\partial_{p}\theta^{\epsilon}
 ,\partial_{p}q_{j}^{\epsilon})\|_{w}^{2}dt\leq C_{1},
 \end{align}
 for any $t_{1}\geq0$, $j\in \{v,c,r\}$, where $C_{1}$ is independent of $\epsilon$
 and depends on the initial value.
\end{lemma}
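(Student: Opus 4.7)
The existence of $(v^\epsilon,\theta^\epsilon,q_v^\epsilon,q_c^\epsilon,q_r^\epsilon)$ at each fixed $\epsilon=(\epsilon_1,\epsilon_2)$ is already provided by Proposition 3.1, so the real content of the lemma is to prove the energy bound (\ref{L2-v-theta-qj}) uniformly in $\epsilon$. The plan is to test each of (\ref{ae1})-(\ref{ae5}) against its own unknown, collect dissipation on the left, handle the source and coupling terms by Young's inequality, and close by Gr\"onwall applied to the total energy $E^\epsilon(t):=\|v^\epsilon\|_{L^2}^2+\|\theta^\epsilon\|_{L^2}^2+\sum_{j\in\{v,c,r\}}\|q_j^\epsilon\|_{L^2}^2$.

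First, I would test (\ref{ae1}) against $v^\epsilon$. The combined trilinear term $\int_{\mathcal M}[(v^\epsilon\cdot\nabla)v^\epsilon+w^\epsilon\partial_p v^\epsilon]\cdot v^\epsilon$ vanishes using $\nabla\cdot v^\epsilon+\partial_p w^\epsilon=0$ together with $v^\epsilon|_{\Gamma_l}=0$ and $w^\epsilon|_{\Gamma_u\cup\Gamma_i}=0$; the Coriolis term is skew-symmetric; and writing $\Phi^\epsilon=\Phi_s^\epsilon(x,y)+\Phi_T^\epsilon$ the barotropic part satisfies $\int v^\epsilon\cdot\nabla\Phi_s^\epsilon=0$ by the definition of $\mathcal V$, leaving only the baroclinic remainder $\int w^\epsilon\, RT^\epsilon/p$, which, after expressing $T^\epsilon$ in terms of $\theta^\epsilon+\theta_h$, is bounded by $C(\|w^\epsilon\|_{L^2}^2+\|\theta^\epsilon\|_{L^2}^2+1)$. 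The dissipation, using $\partial_p v^\epsilon|_{\Gamma_u\cup\Gamma_i}=0$ and $v^\epsilon|_{\Gamma_l}=0$, produces $\|\nabla v^\epsilon\|_{L^2}^2+\epsilon_1\|\tfrac{gp}{R\bar\theta}\partial_p v^\epsilon\|_{L^2}^2$, and since $w^\epsilon(\cdot,p)=-\int_{p_0}^p\nabla\cdot v^\epsilon\,dp'$ yields $\|w^\epsilon\|_{L^2}\le C\|\nabla v^\epsilon\|_{L^2}$, the $\|w^\epsilon\|_{L^2}^2$ piece is absorbed.

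Next, I would test (\ref{ae2})-(\ref{ae5}) against $\theta^\epsilon,q_v^\epsilon,q_c^\epsilon,q_r^\epsilon$ respectively. The convective terms again cancel by incompressibility, while the viscosity delivers $\|\nabla\theta^\epsilon\|_{L^2}^2+\|\partial_p\theta^\epsilon\|_w^2$ and the analogous terms for each $q_j^\epsilon$, plus boundary contributions from the Robin conditions that are either dissipative in sign or controllable by Young's inequality against the inhomogeneous boundary data. The sources $f_\theta,f_{q_j}$ are at most affine in $(\theta^\epsilon,q_v^\epsilon,q_c^\epsilon,q_r^\epsilon)$ because $\tau$ is bounded and $(\cdot)^+$ is Lipschitz, hence contribute at most $C(E^\epsilon+1)$. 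The Heaviside-weighted terms $w^{\epsilon-}F\mathcal H_{\epsilon_2}$ and $w^{\epsilon-}\tilde F\mathcal H_{\epsilon_2}$ are bounded, uniformly in $\epsilon_2$, by $C\|w^\epsilon\|_{L^2}\le C\|\nabla v^\epsilon\|_{L^2}$ since $|F|,|\tilde F|,|\mathcal H_{\epsilon_2}|\le C$, and therefore absorbable into the $v^\epsilon$ dissipation from the previous step. Summing the four resulting differential inequalities yields
$$\tfrac{d}{dt}E^\epsilon(t)+D^\epsilon(t)\le C\,E^\epsilon(t)+C,$$
where $D^\epsilon$ equals the dissipation on the left of (\ref{L2-v-theta-qj}); Gr\"onwall and integration in $t$ then give (\ref{L2-v-theta-qj}) with $C_1$ depending only on $t_1$ and the initial data.

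The principal technical point to watch is $\epsilon$-uniformity: the $\epsilon_2$-dependence disappears because $\mathcal H_{\epsilon_2}\le 1$ independently of $\epsilon_2$, and $\epsilon_1$ appears only as a positive weight on the left-hand side, never as $\epsilon_1^{-1}$ on the right. The most delicate obstruction is the hydrostatic coupling between $v^\epsilon$ and $\theta^\epsilon$ through $\nabla\Phi^\epsilon$, which prevents the $v^\epsilon$-estimate from closing on its own and forces it to be combined with the $\theta^\epsilon$-estimate inside Gr\"onwall; all remaining cross-couplings via the moisture sources are routine thanks to the boundedness of $\tau$, $F$, $\tilde F$ and $\mathcal H_{\epsilon_2}$.
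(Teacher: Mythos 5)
Your proposal follows essentially the same route as the paper: test each equation against its own unknown, use incompressibility and skew-symmetry to kill the transport and Coriolis terms, bound the pressure/source/Heaviside contributions by $\|w^{\epsilon}\|_{L^{2}}\leq C\|\nabla v^{\epsilon}\|_{L^{2}}$ with a small Young parameter so they are absorbed into the horizontal dissipation, control the Robin boundary terms via the trace inequality, and close with Gr\"onwall on the summed energy; the $\epsilon$-uniformity observations ($|\mathcal{H}_{\epsilon_{2}}|\leq 1$ and $\epsilon_{1}$ only on the left) match the paper exactly. The only cosmetic difference is that you make the barotropic/baroclinic splitting of $\nabla\Phi^{\epsilon}$ explicit where the paper cites the standard argument from Cao--Titi, but the content is identical.
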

\begin{proof}
 By taking the inner product of equation (\ref{ae1}) with $v$ in $L^{2}(\mathcal{M})$ and using the integration by parts, we have
 \begin{align*}
 \frac{1}{2}\frac{d}{dt}\|v^{\epsilon}\|_{L^{2}}^{2}+\|\nabla v^{\epsilon}\|_{L^{2}}^{2}
+\epsilon_{1}\|\partial_{p}v^{\epsilon}\|_{w}^{2}
=-\int_{\mathcal{M}}\nabla\Phi^{\epsilon}\cdot v^{\epsilon}d\mathcal{M},
 \end{align*}
 where we have used
 \begin{align*}
\int_{\mathcal{M}}\left[(v^{\epsilon}\cdot\nabla)v^{\epsilon}+\omega^{\epsilon}\partial_{p}v^{\epsilon}\right]\cdot v^{\epsilon} d\mathcal{M}=0,
\end{align*}
and
\begin{align*}
\int_{\mathcal{M}}f\textbf{\emph{k}}{v^{\epsilon}}^{\bot}\cdot v^{\epsilon}d\mathcal{M}=0.
\end{align*}
Considering the relation between $\Phi$ and $T$ in (\ref{e2}), through a similar argument as in \cite{CaoTiti}, we have
\begin{align*}
-\int_{\mathcal{M}}\nabla\Phi^{\epsilon}\cdot v^{\epsilon}d\mathcal{M}
\leq C\|\theta^{\epsilon}\|_{L^{2}}^{2}+\frac{1}{5}\|\nabla v^{\epsilon}\|_{L^{2}}^{2}.
\end{align*}
Then we can deduce that
\begin{align}\label{L2-v}
 \frac{1}{2}\frac{d}{dt}\|v^{\epsilon}\|_{L^{2}}^{2}+\frac{4}{5}\|\nabla v^{\epsilon}\|_{L^{2}}^{2}
+\epsilon_{1}\|\partial_{p}v\|_{w}^{2}
\leq C\|v^{\epsilon}\|_{L^{2}}^{2}+C\|\theta^{\epsilon}\|_{L^{2}}^{2}.
 \end{align}

 By taking the inner product of equation (\ref{ae2}) with $\theta^{\epsilon}$, we get that
 \begin{align*}
 \frac{1}{2}\frac{d}{dt}\|\theta^{\epsilon}\|_{L^{2}}^{2}
 +\int_{\mathcal{M}}\theta^{\epsilon}\mathcal{A}_{\theta}\theta^{\epsilon}d\mathcal{M}
=\int_{\mathcal{M}}f_{\theta^{\epsilon}} \theta^{\epsilon}d\mathcal{M}+\int_{\mathcal{M}}\frac{L}{c_{p}\Pi}{w^{\epsilon}}^{-}\tilde{F}\mathcal{H}_{\epsilon_{2}} \theta^{\epsilon}d\mathcal{M}.
 \end{align*}
 Through a direct calculation and using integration by parts, we have
 \begin{align}\label{L2-theta-0}
 \int_{\mathcal{M}}\theta^{\epsilon}\mathcal{A}_{\theta}\theta^{\epsilon}d\mathcal{M}
 \geq& \|\nabla\theta^{\epsilon}\|_{L^{2}}^{2}+\|\partial_{p}\theta^{\epsilon}\|_{w}^{2}
 +\int_{\mathcal{M}}\left(\frac{gp}{R\bar{\theta}}\right)^{2}\partial_{p}\left(\frac{p_{0}}{p}\right)^{R/c_{p}}|\theta^{\epsilon}|^{2}d\mathcal{M}
 +\nonumber\\
  &2\int_{\mathcal{M}}\left(\frac{gp}{R\bar{\theta}}\right)^{2}\left(\frac{p_{0}}{p}\right)^{R/c_{p}}\theta^{\epsilon}\partial_{p}\theta^{\epsilon}d\mathcal{M}
-\int_{\Gamma_{l}}\theta_{bl}\theta^{\epsilon}d\Gamma_{l}-
\int_{\Gamma_{i}}\theta_{\ast}\theta^{\epsilon}d\Gamma_{i}.
 \end{align}
 It is obviously that
 \begin{align}\label{L2-theta-1}
 \int_{\mathcal{M}}\left(\frac{gp}{R\bar{\theta}}\right)^{2}\partial_{p}\left(\frac{p_{0}}{p}\right)^{R/c_{p}}|\theta^{\epsilon}|^{2}d\mathcal{M}
 \leq C\|\theta^{\epsilon}\|_{L^{2}}^{2}.
 \end{align}
 Utilizing the H\"older inequality and the Young inequality, we get
 \begin{align}\label{L2-theta-2}
 2\int_{\mathcal{M}}\left(\frac{gp}{R\bar{\theta}}\right)^{2}\left(\frac{p_{0}}{p}\right)^{R/c_{p}}
 \theta^{\epsilon}\partial_{p}\theta^{\epsilon}d\mathcal{M}\leq C\| \theta^{\epsilon}\|_{L^{2}}^{2}+\frac{1}{4}\|\partial_{p}\theta^{\epsilon}\|_{L^{2}}^{2},
 \end{align}
 and
 \begin{align}\label{L2-theta-3}
 \int_{\Gamma_{l}}\theta_{bl}\theta^{\epsilon}d\Gamma_{l}+
\int_{\Gamma_{i}}\theta_{\ast}\theta^{\epsilon}d\Gamma_{i}
\leq& C\|\theta\|_{L^{2}(\partial\mathcal{M})}\leq
C(\|\theta^{\epsilon}\|_{L^{2}}+\|\nabla\theta^{\epsilon}\|_{L^{2}}+\|\partial_{p}\theta^{\epsilon}\|_{L^{2}})
\nonumber\\
\leq& C(1+\|\theta^{\epsilon}\|_{L^{2}}^{2})+\frac{1}{2}\|\nabla\theta^{\epsilon}\|_{L^{2}}^{2}
+\frac{1}{4}\|\partial_{p}\theta^{\epsilon}\|_{L^{2}}^{2},
 \end{align}
 where we have used the trace inequality in the second step of (\ref{L2-theta-3}).
 Then substituting (\ref{L2-theta-1})-(\ref{L2-theta-3}) into (\ref{L2-theta-0}), we can get
 \begin{align}
 \int_{\mathcal{M}}\theta^{\epsilon}\mathcal{A}_{\theta}\theta^{\epsilon}d\mathcal{M}
 \geq\frac{1}{2}\|\nabla\theta^{\epsilon}\|_{L^{2}}^{2}+\frac{1}{2}\|\partial_{p}\theta^{\epsilon}\|_{w}^{2}
 -C(\|\theta^{\epsilon}\|_{L^{2}}^{2}+1).
 \end{align}
 Considering the definition of $f_{\theta}$, we have
 \begin{align*}
 \int_{\mathcal{M}}f_{\theta^{\epsilon}}\theta^{\epsilon}d\mathcal{M}
 \leq&\left\|-\frac{gp}{R\Pi\theta_{h}}\frac{\partial\theta_{h}}{\partial p}w^{\epsilon}-\frac{L}{c_{p}\Pi}k_{3}\tau(q_{r}^{\epsilon})(q_{vs}-q_{v}^{\epsilon})^{+}+w^{\epsilon}\frac{\partial\theta_{h}(p)}{\partial p}+f_{\theta}^{1}\right\|_{L^{2}}\|\theta^{\epsilon}\|_{L^{2}}\nonumber\\
 \leq& C(\|w^{\epsilon}\|_{L^{2}}+\|q_{v}^{\epsilon}\|_{L^{2}}+1)\|\theta^{\epsilon}\|_{L^{2}}
 \leq C(1+\|q_{v}^{\epsilon}\|_{L^{2}}^{2}+\|\theta^{\epsilon}\|_{L^{2}}^{2})+\frac{1}{20}\|\nabla v^{\epsilon}\|_{L^{2}}^{2},
 \end{align*}
 where we have used the uniform boundness of $\tau(q_{r})$.
 Considering the definition of $\tilde{F}$ and the uniform boundness of $F$ and $\mathcal{H}_{\epsilon_{2}}$, we get
 \begin{align*}
 \int_{\mathcal{M}}\frac{L}{c_{p}\Pi}{w^{\epsilon}}^{-}\tilde{F}\mathcal{H}_{\epsilon_{2}} \theta^{\epsilon}d\mathcal{M}\leq C\|w^{\epsilon}\|_{L^{2}}\|\theta^{\epsilon}\|_{L^{2}}
 \leq C\|\theta^{\epsilon}\|_{L^{2}}^{2}+\frac{1}{20}\|\nabla v^{\epsilon}\|_{L^{2}}^{2}.
  \end{align*}
  Thus
  \begin{align}\label{L2-theta}
 \frac{d}{dt}\|\theta^{\epsilon}\|_{L^{2}}^{2}
 +\|\nabla\theta^{\epsilon}\|_{L^{2}}^{2}+\|\partial_{p}\theta^{\epsilon}\|_{w}^{2}
\leq C(1+\|q_{v}^{\epsilon}\|_{L^{2}}^{2}+\|\theta^{\epsilon}\|_{L^{2}}^{2})+\frac{1}{10}\|\nabla v^{\epsilon}\|_{L^{2}}^{2}.
 \end{align}

  By similar calculations as for $\theta^{\epsilon}$, we can deal with the $L^{2}$ a priori estimates for $q_{j}^{\epsilon},j\in\{v,c,r\}$, and obtain that
  \begin{align}\label{L2-qv}
 \frac{d}{dt}\|q_{v}^{\epsilon}\|_{L^{2}}^{2}
 +\|\nabla q_{v}^{\epsilon}\|_{L^{2}}^{2}+\|\partial_{p}q_{v}^{\epsilon}\|_{w}^{2}
\leq C(\|q_{v}^{\epsilon}\|_{L^{2}}^{2}+1)+\frac{1}{10}\|\nabla v^{\epsilon}\|_{L^{2}}^{2},
 \end{align}
   \begin{align}\label{L2-qc}
 \frac{d}{dt}\|q_{c}^{\epsilon}\|_{L^{2}}^{2}
 +\|\nabla q_{c}^{\epsilon}\|_{L^{2}}^{2}+\|\partial_{p}q_{c}^{\epsilon}\|_{w}^{2}
\leq C(\|q_{c}^{\epsilon}\|_{L^{2}}^{2}+1)+\frac{1}{10}\|\nabla v^{\epsilon}\|_{L^{2}}^{2},
 \end{align}
 and
  \begin{align}\label{L2-qr}
 \frac{d}{dt}\|q_{r}^{\epsilon}\|_{L^{2}}^{2}
 +\|\nabla q_{r}^{\epsilon}\|_{L^{2}}^{2}+\|\partial_{p}q_{r}^{\epsilon}\|_{w}^{2}
\leq C(\|q_{v}^{\epsilon}\|_{L^{2}}^{2}+\|q_{c}^{\epsilon}\|_{L^{2}}^{2}+\|q_{r}^{\epsilon}\|_{L^{2}}^{2}+1).
 \end{align}

 Combining (\ref{L2-v}), (\ref{L2-theta})-(\ref{L2-qr}), we can deduce that, for $j\in\{v,c,r\}$
 \begin{align*}
 \frac{d}{dt}\|(v^{\epsilon},\theta^{\epsilon},q_{j}^{\epsilon})\|_{L^{2}}^{2}
 +\|(\nabla v^{\epsilon},\nabla\theta^{\epsilon},\nabla q_{j}^{\epsilon})\|_{L^{2}}^{2}+\|(\epsilon\partial_{p}v^{\epsilon},\partial_{p}\theta^{\epsilon},\partial_{p}q_{j}^{\epsilon})\|_{w}^{2}
\leq  C\|(v^{\epsilon},\theta^{\epsilon},q_{v}^{\epsilon},q_{c}^{\epsilon},q_{r}^{\epsilon})\|_{L^{2}}^{2}+C.
 \end{align*}
 Utilizing the Gronwall inequality, we can complete the proof.
\end{proof}
\subsection{A priori $H^{1}$-estimates for $v^{\epsilon}$}
In order to get the $H^{1}$ a priori estimate for solutions, we first consider the $H^{1}$ a priori estimate for the velocity $v^{\epsilon}$. Due to the arising of ${w^{\epsilon}}^{-}F\mathcal{H}_{\epsilon}(q_{v^{\epsilon}}-q_{vs})$ in the source term of $\theta^{\epsilon}$ and $q_{v}^{\epsilon},q_{c}^{\epsilon}$, we have to
 first study the $H^{1}$ a priori estimate for $v^{\epsilon}$ with $\theta^{\epsilon}\in L^{\infty}(0,t_{1};L^{2})\cap L^{2}(0,t_{1};H^{1})$. Based on the a priori estimate for weak solutions, we will first improve the regularity of $v^{\epsilon}$ in $p-$direction, and then the horizontal direction.
\begin{lemma}\label{vpL2}
Let $\epsilon>0$, $t_{1}>0$, $\partial_{p}v_{0}^{\epsilon}\in L^{2}(0,t_{1};\mathbb{H})$, $\theta^{\epsilon}\in L^{\infty}(0,t_{1};L^{2})\cap L^{2}(0,t_{1};H^{1})$. Then there exists  $t_{\ast}\in (0,t_{1}]$ and at least one solution $v^{\epsilon}$ to equation (\ref{ae1}) with boundary condition (\ref{boundary condition}) and initial condition (\ref{initial condition}). Moreover,
\begin{align}
 \sup_{0\leq t\leq t_{\ast}}\|\partial_{p}v^{\epsilon}\|_{L^{2}}^{2}+\int_{0}^{t_{\ast}}\|\nabla \partial_{p}v^{\epsilon}\|_{L^{2}}^{2}
+\epsilon_{1}\|\partial_{p}^{2}v^{\epsilon}\|_{w}^{2}ds
\leq C_{2},
\end{align}
where $C_{2}$ is independent of $\epsilon$.
\end{lemma}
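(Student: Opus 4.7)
The plan is to differentiate the momentum equation (\ref{ae1}) in $p$ and test against $\partial_p v^\epsilon$ in $L^2(\mathcal{M})$. The boundary conditions $\partial_p v^\epsilon=0$ on $\Gamma_i\cup\Gamma_u$ and $\partial_{\textbf{n}}v^\epsilon=0$ on $\Gamma_l$ (the latter giving $\partial_{\textbf{n}}\partial_p v^\epsilon=0$) ensure every integration by parts is boundary-term free. The linear dissipative contributions then yield the coercive terms $\|\nabla\partial_p v^\epsilon\|_{L^2}^2$ from $-\Delta$ and $\epsilon_1\|\partial_p^2 v^\epsilon\|_w^2$ from the artificial $p$-viscosity (modulo lower-order cross terms from derivatives of $(gp/R\bar{\theta})^2$ that Young absorbs). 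The Coriolis term vanishes pointwise since $(\partial_p v^\epsilon)^{\bot}\cdot\partial_p v^\epsilon=0$. Via hydrostatic balance (\ref{e2}) and the definition $\theta=T(p_0/p)^{R/c_p}-\theta_h$ with $\theta_h=\theta_h(p)$, the pressure contribution reads $-\int\partial_p\nabla\Phi^\epsilon\cdot\partial_p v^\epsilon=\int (R/p)\nabla T^\epsilon\cdot\partial_p v^\epsilon$, which is bounded by $C\|\nabla\theta^\epsilon\|_{L^2}\|\partial_p v^\epsilon\|_{L^2}$, a quantity integrable in time by the hypothesis $\theta^\epsilon\in L^2(0,t_1;H^1)$.

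The main obstacle is the nonlinear advection. Expanding $\partial_p[(v^\epsilon\cdot\nabla)v^\epsilon]$ and $\partial_p[w^\epsilon\partial_p v^\epsilon]$ and exploiting $\partial_p w^\epsilon=-\nabla\cdot v^\epsilon$ from (\ref{e3}) together with the vanishing of $w^\epsilon$ on $\Gamma_i\cup\Gamma_u$ and of $v^\epsilon$ on $\Gamma_l$, the terms $\int(v^\epsilon\cdot\nabla)\partial_p v^\epsilon\cdot\partial_p v^\epsilon$ and $\int w^\epsilon\partial_p^2 v^\epsilon\cdot\partial_p v^\epsilon$ cancel against each other, leaving the net trilinear contribution bounded in magnitude by $C\int_{\mathcal{M}}|\nabla v^\epsilon||\partial_p v^\epsilon|^2 d\mathcal{M}$. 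Since $\|\partial_p^2 v^\epsilon\|_{L^2}$ is not available uniformly in $\epsilon$, we estimate this via the anisotropic inequality of Lemma~\ref{HHP} with $f=g=\partial_p v^\epsilon$ (horizontally zero on $\Gamma_l$ because $v^\epsilon|_{\Gamma_l}=0$) and $h=\nabla v^\epsilon$, obtaining
\begin{align*}
\int_{\mathcal{M}}|\nabla v^\epsilon||\partial_p v^\epsilon|^2 d\mathcal{M}\le C\|\nabla\partial_p v^\epsilon\|_{L^2}\|\partial_p v^\epsilon\|_{L^2}\bigl(\|\nabla v^\epsilon\|_{L^2}^{1/2}\|\nabla\partial_p v^\epsilon\|_{L^2}^{1/2}+\|\nabla v^\epsilon\|_{L^2}\bigr).
\end{align*}
Young's inequality absorbs the $\|\nabla\partial_p v^\epsilon\|_{L^2}^2$ piece into the left-hand dissipation, leaving a residual term bounded by $C\|\nabla v^\epsilon\|_{L^2}^2\bigl(1+\|\partial_p v^\epsilon\|_{L^2}^2\bigr)^2$.

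Assembling everything produces a differential inequality of the form
\begin{align*}
\frac{d}{dt}y(t)+\tfrac{1}{2}\|\nabla\partial_p v^\epsilon\|_{L^2}^2+\tfrac{1}{2}\epsilon_1\|\partial_p^2 v^\epsilon\|_w^2\le C\alpha(t)\bigl(1+y(t)\bigr)^2,\qquad y(t):=\|\partial_p v^\epsilon(t)\|_{L^2}^2,
\end{align*}
where $\alpha(t):=1+\|\nabla v^\epsilon(t)\|_{L^2}^2+\|\nabla\theta^\epsilon(t)\|_{L^2}^2\in L^1(0,t_1)$ with norm uniform in $\epsilon$, by Lemma~\ref{l2prior} and the hypothesis on $\theta^\epsilon$. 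Because the right-hand side is super-linear in $y$, ordinary Gronwall fails; instead I would close the estimate by applying the generalized Bihari--LaSalle inequality (Lemma~\ref{nonlinearGronwall}) with $g(s)=(1+s)^2$, which produces a uniform bound on $y(t)$ on a subinterval $[0,t^{\ast}]\subset[0,t_1]$ whose length is controlled from below by $\|\partial_p v_0\|_{L^2}$ and $\int_0^{t_1}\alpha\,ds$. Integrating the differential inequality on $[0,t^{\ast}]$ then yields the claimed uniform-in-$\epsilon$ bound on $\int_0^{t^{\ast}}(\|\nabla\partial_p v^\epsilon\|_{L^2}^2+\epsilon_1\|\partial_p^2 v^\epsilon\|_w^2)\,dt$; existence of the underlying solution $v^\epsilon$ on $[0,t^{\ast}]$ is already provided by Proposition~3.1.
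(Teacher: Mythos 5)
Your proposal follows essentially the same route as the paper's proof: differentiate (\ref{ae1}) in $p$, test against $\partial_{p}v^{\epsilon}$, observe that the transport and Coriolis contributions vanish, reduce the remaining nonlinearity to $C\int_{\mathcal{M}}|\nabla v^{\epsilon}||\partial_{p}v^{\epsilon}|^{2}d\mathcal{M}$ and bound it via Lemma~\ref{HHP} with $f=g=\partial_{p}v^{\epsilon}$ and $h=\nabla v^{\epsilon}$, then close the resulting superlinear differential inequality with the generalized Bihari--LaSalle inequality (Lemma~\ref{nonlinearGronwall}) on a short interval whose length is uniform in $\epsilon$. The only cosmetic differences are your choice $g(s)=(1+s)^{2}$ in place of the paper's $1+s+s^{2}$ and your phrasing of the transport terms as ``cancelling'' rather than summing to zero after integration by parts with $\nabla\cdot v^{\epsilon}+\partial_{p}w^{\epsilon}=0$; neither changes the argument.
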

\begin{proof}
Set $u^{\epsilon}=\partial_{p}v^{\epsilon}$. Then it satisfies that
\begin{align*}
&\partial_{t}u^{\epsilon}+(u^{\epsilon}\cdot\nabla)v^{\epsilon}+v^{\epsilon}\cdot\nabla u^{\epsilon}+(\nabla\cdot v^{\epsilon})u^{\epsilon}+w^{\epsilon}\partial_{p}u^{\epsilon}-\Delta u^{\epsilon}-\epsilon_{1}\partial_{p}^{2}\left(\left(\frac{gp}{R\bar{\theta}}\right)^{2}u^{\epsilon}\right)
+f{u^{\epsilon}}^{\bot}
=\frac{R}{p}\nabla T^{\epsilon}.
\end{align*}
Taking the inner product of the above equation with $u^{\epsilon}$ in $L^{2}$ space, we can obtain that
\begin{align}\label{vpL20}
 &\frac{1}{2}\frac{d}{dt}\|u^{\epsilon}\|_{L^{2}}^{2}+\|\nabla u^{\epsilon}\|_{L^{2}}^{2}
+\epsilon_{1}\|\partial_{p}u^{\epsilon}\|_{w}^{2}\nonumber\\
=&\int_{\mathcal{M}}\left[u^{\epsilon}\cdot\nabla v^{\epsilon}+(\nabla\cdot v^{\epsilon})u^{\epsilon}\right]\cdot u^{\epsilon}d\mathcal{M}
+\int_{\mathcal{M}}\frac{R}{p}\nabla T^{\epsilon}\cdot u^{\epsilon}d\mathcal{M},
 \end{align}
 where we have used
 \begin{align*}
 \int_{\mathcal{M}}\left[(v^{\epsilon}\cdot\nabla)u^{\epsilon}+w^{\epsilon}\partial_{p}u^{\epsilon}\right]\cdot u^{\epsilon}d\mathcal{M}=0,
 \end{align*}
 and
  \begin{align*}
 \int_{\mathcal{M}}f{u^{\epsilon}}^{\bot}\cdot u^{\epsilon}d\mathcal{M}=0.
 \end{align*}
 Using the inequality in Lemma \ref{HHP}, we can obtain that
 \begin{align*}
 &\int_{\mathcal{M}}\left[u^{\epsilon}\cdot\nabla v^{\epsilon}+(\nabla\cdot v^{\epsilon})u^{\epsilon}\right]\cdot u^{\epsilon}d\mathcal{M}
 \leq C\int_{\mathcal{M}}|u^{\epsilon}||\nabla v^{\epsilon}||u^{\epsilon}|d\mathcal{M}\nonumber\\
 \leq& C\|u^{\epsilon}\|_{L^{2}}\|\nabla u^{\epsilon}\|_{L^{2}}\left(\|\nabla v^{\epsilon}\|_{L^{2}}+\|\nabla v^{\epsilon}\|_{L^{2}}^{\frac{1}{2}}\|\nabla u^{\epsilon}\|_{L^{2}}^{\frac{1}{2}} \right)\nonumber\\
 \leq& C\|\nabla v^{\epsilon}\|_{L^{2}}^{2}\left(\|u^{\epsilon}\|_{L^{2}}^{4}
 +\|u^{\epsilon}\|_{L^{2}}^{2}\right)+\frac{1}{2}\|\nabla u^{\epsilon}\|_{L^{2}}^{2}.
 \end{align*}
 By the H\"older inequality and the Young inequality, we have
 \begin{align*}
 \int_{\mathcal{M}}\frac{R}{p}\nabla T^{\epsilon}\cdot u^{\epsilon}d\mathcal{M}
\leq C\|u^{\epsilon}\|_{L^{2}}^{2}+\|\nabla\theta^{\epsilon}\|_{L^{2}}^{2}.
 \end{align*}
  Substituting the above inequalities into (\ref{vpL20}), we can deduce that
 \begin{align}
 &\frac{d}{dt}\|u^{\epsilon}\|_{L^{2}}^{2}+\|\nabla u^{\epsilon}\|_{L^{2}}^{2}
+\epsilon_{1}\|\partial_{p}u^{\epsilon}\|_{w}^{2}\nonumber\\
\leq
&C\left(\|\nabla v^{\epsilon}\|_{L^{2}}^{2}+1\right)\left(\|u^{\epsilon}\|_{L^{2}}^{4}
 +\|u^{\epsilon}\|_{L^{2}}^{2}+1\right)+\|\nabla\theta^{\epsilon}\|_{L^{2}}^{2}.
\end{align}
Then integrating on time from $0$ to $t$, we get
 \begin{align*}
 &\|u^{\epsilon}\|_{L^{2}}^{2}+\int_{0}^{t}\|\nabla u^{\epsilon}\|_{L^{2}}^{2}
+\epsilon_{1}\|\partial_{p}u^{\epsilon}\|_{w}^{2}ds\nonumber\\
\leq
&C\int_{0}^{t}\left(\|\nabla v^{\epsilon}\|_{L^{2}}^{2}+1\right)\left(\|u^{\epsilon}\|_{L^{2}}^{4}
 +\|u^{\epsilon}\|_{L^{2}}^{2}+1\right)ds+\int_{0}^{t}\|\nabla\theta^{\epsilon}\|_{L^{2}}^{2}
 ds+\|u^{\epsilon}(0)\|_{L^{2}}^{2}.
\end{align*}
Set
\begin{align*}
&f(t)=\int_{0}^{t}\|\nabla\theta^{\epsilon}\|_{L^{2}}^{2}
 +\|u^{\epsilon}(0)\|_{L^{2}}^{2},\ \ \alpha(t)=1,\nonumber\\
 &\beta(t)=\|\nabla v^{\epsilon}\|_{L^{2}}^{2}+1,\ \ g(u)=1+u^{2}+u^{4}.
 \end{align*}
 Utilizing the generalized Bihari-Lasalle inequality (\ref{nonlinearGronwall1}), we can deduce that
  \begin{align*}
 \|u^{\epsilon}\|_{L^{2}}^{2}+\int_{0}^{t}\|\nabla u^{\epsilon}\|_{L^{2}}^{2}
+\epsilon_{1}\|\partial_{p}u^{\epsilon}\|_{w}^{2}ds
\leq
\mathcal{G}^{-1}\left\{\int_{0}^{t}\|\nabla v^{\epsilon}\|_{L^{2}}^{2}+\|\nabla\theta^{\epsilon}\|_{L^{2}}^{2}
 +1ds\right\}
\end{align*}
for $t\in[0,t_{\ast}]$, where $t_{\ast}\in(0,t_{1}]$ is sufficiently small and the function $\mathcal{G}$ is defined as
  \begin{align*}
  \mathcal{G}(r)=\int_{0}^{r}\frac{1}{1+s+s^{2}}ds
  =\frac{2}{\sqrt{3}}{\rm artan}\left(\frac{1+2r}{\sqrt{3}}\right)-\frac{\pi}{3\sqrt{3}},\ \ \forall r>0.
  \end{align*}
Then considering the $L^{2}$ a priori estimate (\ref{L2-v-theta-qj}), we have
\begin{align}\label{uL2}
 \sup_{0\leq t\leq t_{\ast}}\|u^{\epsilon}\|_{L^{2}}^{2}+\int_{0}^{t_{\ast}}\|\nabla u^{\epsilon}\|_{L^{2}}^{2}
+\epsilon_{1}\|\partial_{p}u^{\epsilon}\|_{w}^{2}ds
\leq C_{2},
\end{align}
where $C_{2}$ is independent of $\epsilon$.
\end{proof}

 Next, we consider the $H^{1}$ a priori estimate for $v^{\epsilon}$. Based on the result in the above lemma, we only need to prove the $H^{1}$ estimate in the horizontal direction.

 \begin{lemma}
 Let $\epsilon>0$, $t_{1}>0$, $\theta^{\epsilon}\in L^{\infty}(0,t_{1};L^{2})\cap L^{2}(0,t_{1};H^{1})$, $v_{0}^{\epsilon}\in\mathbb{V}$. Then there exists $0< t_{\ast}\leq t_{1}$ and a solution $v^{\epsilon}$ to equation (\ref{ae1})  with boundary condition (\ref{boundary condition}) and initial condition (\ref{initial condition}) in time interval $(0,t_{\ast})$. Moreover,
\begin{align}
 \sup_{0\leq t\leq t_{\ast}}\|v^{\epsilon}\|_{H^{1}}^{2}+\int_{0}^{t_{\ast}}\|\nabla \partial_{p}v^{\epsilon}\|_{L^{2}}^{2}+\|\Delta v^{\epsilon}\|_{L^{2}}^{2}
+\epsilon_{1}\|\partial_{p}^{2}v^{\epsilon}\|_{w}^{2}ds
\leq C_{3},
\end{align}
where $C_{3}$ is independent of $\epsilon$.
 \end{lemma}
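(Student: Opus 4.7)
The plan is to carry out the energy estimate for the horizontal $H^{2}$-norm by testing equation (\ref{ae1}) with $-\Delta v^{\epsilon}$ in $L^{2}(\mathcal{M})$. Using the boundary conditions ($v^{\epsilon}=\partial_{\textbf{n}}v^{\epsilon}=0$ on $\Gamma_{l}$ and $\partial_{p}v^{\epsilon}=0$ on $\Gamma_{u}\cup\Gamma_{i}$), all boundary terms from integration by parts vanish and the principal part becomes
\begin{equation*}
\tfrac{1}{2}\tfrac{d}{dt}\|\nabla v^{\epsilon}\|_{L^{2}}^{2}+\|\Delta v^{\epsilon}\|_{L^{2}}^{2}+\epsilon_{1}\left\|\tfrac{gp}{R\bar{\theta}}\nabla\partial_{p}v^{\epsilon}\right\|_{L^{2}}^{2}=\int_{\mathcal{M}}\bigl[(v^{\epsilon}\!\cdot\!\nabla)v^{\epsilon}+w^{\epsilon}\partial_{p}v^{\epsilon}+\nabla\Phi^{\epsilon}+f{v^{\epsilon}}^{\bot}\bigr]\!\cdot\!\Delta v^{\epsilon}\,d\mathcal{M}.
\end{equation*}
The Coriolis term is immediate by Cauchy--Schwarz and Young. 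For the pressure contribution, I would use $\partial_{p}\Phi^{\epsilon}=-RT^{\epsilon}/p$ together with the fact that the barotropic mode $\bar v^{\epsilon}$ is divergence-free and vanishes on $\partial\mathcal{M}'$, to dominate it by $C(\|\theta^{\epsilon}\|_{L^{2}}^{2}+\|\nabla\theta^{\epsilon}\|_{L^{2}}^{2})+\tfrac{1}{8}\|\Delta v^{\epsilon}\|_{L^{2}}^{2}$, whose non-absorbed part is $L^{1}_{t}$ by hypothesis on $\theta^{\epsilon}$ and Lemma \ref{l2prior}.

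The main obstacle is the two trilinear terms, since without vertical viscosity neither $\|\partial_{p}^{2}v^{\epsilon}\|_{L^{2}}$ nor $\|\nabla\Delta v^{\epsilon}\|_{L^{2}}$ is available on the right-hand side. For the $w^{\epsilon}\partial_{p}v^{\epsilon}$ contribution I would use $w^{\epsilon}(p)=-\int_{p_{0}}^{p}\nabla\!\cdot\! v^{\epsilon}\,dp'$ to put the integral in the form of Lemma \ref{trilinear term lemma}; with the choice $f=|\nabla v^{\epsilon}|$, $g=|\Delta v^{\epsilon}|$, $h=|\partial_{p}v^{\epsilon}|$ and the first branch of the minimum, only $\|\nabla v^{\epsilon}\|_{L^{2}}$, $\|\nabla^{2}v^{\epsilon}\|_{L^{2}}$ (comparable to $\|\Delta v^{\epsilon}\|_{L^{2}}+\|v^{\epsilon}\|_{L^{2}}$ by two-dimensional elliptic regularity), $\|\partial_{p}v^{\epsilon}\|_{L^{2}}$, and $\|\nabla\partial_{p}v^{\epsilon}\|_{L^{2}}$ appear, all controlled by Lemmas \ref{l2prior} and \ref{vpL2}. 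For $\int(v^{\epsilon}\!\cdot\!\nabla)v^{\epsilon}\!\cdot\!\Delta v^{\epsilon}$, I would integrate by parts (using $v^{\epsilon}=\nabla v^{\epsilon}=0$ on $\Gamma_{l}$) to rewrite it as
\begin{equation*}
-\!\int_{\mathcal{M}}(\partial_{k}v_{j}^{\epsilon})(\partial_{j}v_{i}^{\epsilon})(\partial_{k}v_{i}^{\epsilon})\,d\mathcal{M}+\tfrac{1}{2}\!\int_{\mathcal{M}}(\nabla\!\cdot\! v^{\epsilon})\,|\nabla v^{\epsilon}|^{2}\,d\mathcal{M},
\end{equation*}
and then apply Lemma \ref{HHP} with $f=g=h=\nabla v^{\epsilon}$, which lies in $H_{0}^{1}(\mathcal{M}')$ slice-wise thanks to $\nabla v^{\epsilon}|_{\Gamma_{l}}=0$. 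A final round of Young's inequality absorbs the residual fractions of $\|\Delta v^{\epsilon}\|_{L^{2}}^{2}$ back into the dissipation.

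Assembling the bounds and using the uniform controls from Lemmas \ref{l2prior} and \ref{vpL2} to convert higher-order factors into time-integrable coefficients, one arrives at a differential inequality of the form
\begin{equation*}
\tfrac{d}{dt}\|\nabla v^{\epsilon}\|_{L^{2}}^{2}+\tfrac{1}{2}\|\Delta v^{\epsilon}\|_{L^{2}}^{2}+\epsilon_{1}\left\|\tfrac{gp}{R\bar{\theta}}\nabla\partial_{p}v^{\epsilon}\right\|_{L^{2}}^{2}\leq \beta(t)\,g\bigl(\|\nabla v^{\epsilon}\|_{L^{2}}^{2}\bigr)+G(t),
\end{equation*}
with $g(y)=1+y+y^{2}$, $\beta(t)=C(1+\|\nabla\partial_{p}v^{\epsilon}\|_{L^{2}}^{2})\in L^{1}(0,t_{1})$ by Lemma \ref{vpL2}, and $G\in L^{1}(0,t_{1})$ by the hypothesis on $\theta^{\epsilon}$. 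Because $g$ is super-linear, the generalized Bihari--LaSalle inequality (Lemma \ref{nonlinearGronwall}), applied exactly as in the proof of Lemma \ref{vpL2}, yields an $\epsilon$-independent bound on $\|\nabla v^{\epsilon}(t)\|_{L^{2}}^{2}$ on a possibly smaller interval $[0,t_{\ast}]\subset[0,t_{1}]$ chosen so that the argument of $\mathcal{G}^{-1}$ stays in its domain. Reinserting this bound into the inequality and integrating in time gives $\int_{0}^{t_{\ast}}\|\Delta v^{\epsilon}\|_{L^{2}}^{2}\,dt\leq C$, while $\int_{0}^{t_{\ast}}\bigl(\|\nabla\partial_{p}v^{\epsilon}\|_{L^{2}}^{2}+\epsilon_{1}\|\partial_{p}^{2}v^{\epsilon}\|_{w}^{2}\bigr)dt$ is already furnished by Lemma \ref{vpL2}; together this delivers the claimed uniform bound $C_{3}$.
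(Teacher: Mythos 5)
Your proposal is correct and reproduces the paper's skeleton for most terms (testing with $-\Delta v^{\epsilon}$, the barotropic/baroclinic splitting of the pressure term, and the application of Lemma \ref{trilinear term lemma} with $f=|\nabla v^{\epsilon}|$, $g=|\Delta v^{\epsilon}|$, $h=|\partial_{p}v^{\epsilon}|$ for the $w^{\epsilon}\partial_{p}v^{\epsilon}$ contribution), but you treat the advection term $\int(v^{\epsilon}\cdot\nabla)v^{\epsilon}\cdot\Delta v^{\epsilon}$ by a genuinely different route. The paper applies the pointwise bound $f(p)\leq C\int_{p_{0}}^{p_{1}}|f|\,dp+C\int_{p_{0}}^{p_{1}}|\partial_{p}f|\,dp$ and then Lemma \ref{trilinear term lemma} directly to this term as well; after Young's inequality the coefficient of $\|\nabla v^{\epsilon}\|_{L^{2}}^{2}$ is a sum of products like $\|u^{\epsilon}\|_{L^{2}}^{2}\|\nabla u^{\epsilon}\|_{L^{2}}^{2}$ and $\|v^{\epsilon}\|_{L^{2}}^{2}\|\nabla v^{\epsilon}\|_{L^{2}}^{2}$, all in $L^{1}(0,t_{\ast})$ by Lemmas \ref{l2prior} and \ref{vpL2}, so the resulting differential inequality is \emph{linear} in $\|\nabla v^{\epsilon}\|_{L^{2}}^{2}$ and closes with the ordinary Gronwall lemma --- the only source of locality is the $t_{\ast}$ inherited from Lemma \ref{vpL2}. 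You instead integrate by parts to reduce the advection term to a cubic integral in $\nabla v^{\epsilon}$ and invoke Lemma \ref{HHP} with $f=g=h=\nabla v^{\epsilon}$; this is legitimate, but note two costs: (i) Lemma \ref{HHP} requires $f,g\in L^{2}(p_{0},p_{1};H_{0}^{1}(\mathcal{M}'))$, so you need the full trace $\nabla v^{\epsilon}|_{\Gamma_{l}}=0$, which is available only because the paper imposes \emph{both} $v=0$ and $\partial_{\textbf{n}}v=0$ on $\Gamma_{l}$ (the paper's route needs no such trace condition, since Lemma \ref{trilinear term lemma} holds for arbitrary $H^{1}$ functions); (ii) your Young step leaves a genuinely quadratic term $C\|\nabla v^{\epsilon}\|_{L^{2}}^{4}$, forcing a second application of the Bihari--LaSalle lemma and a possible further shrinking of $t_{\ast}$. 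Both arguments deliver the claimed $\epsilon$-independent bound on some $(0,t_{\ast}]$, so your proof is acceptable; the paper's version is marginally sharper in that the $H^{1}$ estimate for $\nabla v^{\epsilon}$ itself imposes no new restriction on the length of the time interval.
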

\begin{proof}
 By taking the inner product of equation (\ref{ae1}) with $-\Delta v$ in $L^{2}(\mathcal{M})$, using integration by parts, we obtain that
\begin{align*}
&\frac{1}{2}\frac{d}{dt}\|\nabla v^{\epsilon}\|_{L^{2}}^{2}+\|\Delta v^{\epsilon}\|_{L^{2}}^{2}
+\epsilon_{1}\|\nabla\partial_{p}v^{\epsilon}\|_{w}^{2}\nonumber\\
=&\int_{\mathcal{M}}\left[(v^{\epsilon}\cdot\nabla) v^{\epsilon}+w^{\epsilon}\partial_{p}v^{\epsilon}\right]\cdot \Delta v^{\epsilon}d\mathcal{M}
+\int_{\mathcal{M}}\nabla\Phi^{\epsilon}\cdot\Delta v^{\epsilon}d\mathcal{M}
+\int_{\mathcal{M}}f{v^{\epsilon}}^{\bot}\cdot\Delta v^{\epsilon}d\mathcal{M}.
 \end{align*}
 Noting the fact that
 \begin{align}
 f(p)\leq C\int_{p_{0}}^{p_{1}}|f|dp+C\int_{p_{0}}^{p_{1}}|\partial_{p}f|dp,
 \end{align}
  and utilizing the inequality in Lemma \ref{trilinear term lemma}, we can deduce that
 \begin{align*}
 &\int_{\mathcal{M}}(v^{\epsilon}\cdot\nabla)v^{\epsilon}\cdot\Delta v^{\epsilon}d\mathcal{M}\leq\int_{\mathcal{M}'} \int_{p_{0}}^{p_{1}} |v^{\epsilon}|+|\partial_{p}v^{\epsilon}|dp\int_{p_{0}}^{p_{1}} |\nabla v^{\epsilon}||\Delta v^{\epsilon}|dp\mathcal{M}'\nonumber\\
\leq&C \left(\|v^{\epsilon}\|_{L^2}^{\frac{1}{2}}+\|u^{\epsilon}\|_{L^2}^{\frac{1}{2}}\right)
\left(\|v^{\epsilon}\|_{L^{2}}^{\frac{1}{2}} + \|\nabla v^{\epsilon}\|_{L^{2}}^{\frac{1}{2}}+\|u^{\epsilon}\|_{L^2}^{\frac{1}{2}}+\|\nabla u^{\epsilon}\|_{L^2}^{\frac{1}{2}}\right)\cdot\nonumber\\
&\|\Delta v^{\epsilon}\|_{L^{2}} \|\nabla v^{\epsilon}\|_{L^{2}}^{\frac{1}{2}}\left(\|\nabla v^{\epsilon}\|_{L^{2}}^{\frac{1}{2}}+ \|\nabla^{2} v^{\epsilon}\|_{L^{2}}^{\frac{1}{2}}\right)\nonumber\\
\leq&C\left(\|v^{\epsilon}\|_{L^{2}}^{2}+\|\nabla v^{\epsilon}\|_{L^{2}}^{2}+\|v^{\epsilon}\|_{L^{2}}^{4}
+\|v^{\epsilon}\|_{L^{2}}^{2}\|\nabla v^{\epsilon}\|_{L^{2}}^{2}+\|u^{\epsilon}\|_{L^{2}}^{2}+\|\nabla u^{\epsilon}\|_{L^{2}}^{2}\right.\nonumber\\
&\left.+\|u^{\epsilon}\|_{L^{2}}^{4}+\|u^{\epsilon}\|_{L^{2}}^{2}\|\nabla u^{\epsilon}\|_{L^{2}}^{2}\right)\|\nabla v^{\epsilon}\|_{L^{2}}^{2}+\frac{1}{8}\|\Delta v^{\epsilon}\|_{L^{2}}^{2}.
 \end{align*}
 Considering the inequality in Lemma \ref{trilinear term lemma} again, we have
\begin{align*}
 &\int_{\mathcal{M}}w^{\epsilon}\partial_{p}v^{\epsilon}\cdot\Delta v^{\epsilon}d\mathcal{M}\leq\int_{\mathcal{M}'}\int_{p_{0}}^{p_{1}}|\nabla v^{\epsilon}|dp\int_{p_{0}}^{p_{1}}|\partial_{p}v^{\epsilon}||\Delta v^{\epsilon}|dpd\mathcal{M}'\nonumber\\
 \leq& C\|\Delta v^{\epsilon}\|_{L^{2}}\|\nabla v^{\epsilon}\|_{L^{2}}^{\frac{1}{2}}\left(
 \|\nabla v^{\epsilon}\|_{L^{2}}^{\frac{1}{2}}+\|\nabla^{2} v^{\epsilon}\|_{L^{2}}^{\frac{1}{2}}\right)\|u^{\epsilon}\|_{L^{2}}^{\frac{1}{2}}\left(
 \|u^{\epsilon}\|_{L^{2}}^{\frac{1}{2}}+\|\nabla u^{\epsilon}\|_{L^{2}}^{\frac{1}{2}}\right)\nonumber\\
 \leq& C\left(\|u^{\epsilon}\|_{L^{2}}^{2}+\|u^{\epsilon}\|_{L^{2}}^{4}+\|u^{\epsilon}\|_{L^{2}}^{2}
 \|\nabla u^{\epsilon}\|_{L^{2}}^{2}+\|\nabla u^{\epsilon}\|_{L^{2}}^{2}\right)\|\nabla v^{\epsilon}\|_{L^{2}}^{2}+\frac{1}{8}\|\Delta v^{\epsilon}\|_{L^{2}}^{2}.
 \end{align*}
Using the H\"older inequality and the Young inequality, as well as the Minkowski inequality in integral form, we have
\begin{align*}
\int_{\mathcal{M}}\nabla\Phi^{\epsilon}\cdot\Delta v^{\epsilon}d\mathcal{M}
&=\int_{\mathcal{M}}\nabla\Phi^{\epsilon}_{s}\cdot\Delta v^{\epsilon}d\mathcal{M}
+\int_{\mathcal{M}}\int_{p_{0}}^{p_{1}}\frac{R}{p}\nabla T^{\epsilon}dp'\cdot\Delta v^{\epsilon}d\mathcal{M}\nonumber\\
\leq& C\|\nabla\theta^{\epsilon}\|_{L^{2}}^{2}+\frac{1}{8}\|\Delta v^{\epsilon}\|_{L^{2}}^{2},
\end{align*}
where we have used the fact that
\begin{align*}
\int_{\mathcal{M}}\nabla\Phi^{\epsilon}_{s}\cdot\Delta v^{\epsilon}d\mathcal{M}
=\int_{\mathcal{M}'}\nabla\Phi^{\epsilon}_{s}\cdot\Delta \bar{v^{\epsilon}}d\mathcal{M}'=0.
\end{align*}
Here $\bar{v^{\epsilon}}$ stands for the mean value of $v^{\epsilon}$ from $p_{0}$ to $p_{1}$, and $\Phi_{s}$ is the geopotential at $p=p_{1}$.
Similarly,
\begin{align*}
\int_{\mathcal{M}}f{v^{\epsilon}}^{\bot}\cdot\Delta v^{\epsilon}d\mathcal{M}
\leq C\|v^{\epsilon}\|_{L^{2}}^{2}
+\frac{1}{8}\|\Delta v^{\epsilon}\|_{L^{2}}^{2}.
\end{align*}
Thus
\begin{align*}
\frac{d}{dt}\|\nabla v^{\epsilon}\|_{L^{2}}^{2}+\|\Delta v^{\epsilon}\|_{L^{2}}^{2}
+\epsilon_{1}\|\nabla\partial_{p}v^{\epsilon}\|_{w}^{2}
\leq A_{1}(t)\|\nabla v^{\epsilon}\|_{L^{2}}^{2}+A_{2}(t),
 \end{align*}
 where
 \begin{align*}
 A_{1}(t)=C(&\|v^{\epsilon}\|_{L^{2}}^{2}+\|\nabla v^{\epsilon}\|_{L^{2}}^{2}+\|v^{\epsilon}\|_{L^{2}}^{4}
+\|v^{\epsilon}\|_{L^{2}}^{2}\|\nabla v^{\epsilon}\|_{L^{2}}^{2}+\|u^{\epsilon}\|_{L^{2}}^{2}+\|\nabla u^{\epsilon}\|_{L^{2}}^{2}+\nonumber\\
&\|u^{\epsilon}\|_{L^{2}}^{4}
+\|u^{\epsilon}\|_{L^{2}}^{2}\|\nabla u^{\epsilon}\|_{L^{2}}^{2}),
 \end{align*}
 and
 \begin{align*}
 A_{2}(t)=C\left(\|v^{\epsilon}\|_{L^{2}}^{2}+
 \|\nabla\theta^{\epsilon}\|_{L^{2}}^{2}\right).
 \end{align*}
 Considering regularities of $v^{\epsilon},\nabla v^{\epsilon},\nabla\theta^{\epsilon},u^{\epsilon},\nabla u^{\epsilon}$ in (\ref{L2-v-theta-qj}) and (\ref{uL2}), we know that $A_{1}$ and $A_{2}$ are $L^{2}$ integrable in $(0,t_{\ast})$. Then using the Gronwall inequality, we can obtain that
 \begin{align}
\sup_{0\leq t\leq t_{\ast}}\|\nabla v^{\epsilon}\|_{L^{2}}^{2}+\int_{0}^{t_{\ast}}\|\Delta v^{\epsilon}\|_{L^{2}}^{2}
+\epsilon_{1}\|\nabla\partial_{p}v^{\epsilon}\|_{w}^{2}ds
\leq C_{3},
 \end{align}
 where $C_{3}$ is independent of $\epsilon$.
\end{proof}

 Next, we consider the time regularity of solutions to equations (\ref{ae1})-(\ref{ae5}). We first consider the time regularity of $v$.
 \begin{lemma}\label{trv}
 Let $\epsilon>0$, $t_{1}>0$, $\theta^{\epsilon}\in L^{\infty}(0,t_{1};L^{2})\cap L^{2}(0,t_{1};H^{1})$, $v_{0}^{\epsilon}\in\mathbb{V}$ and $v^{\epsilon}$ be the solution to equation (\ref{ae1}) with boundary condition (\ref{boundary condition}) and initial condition (\ref{initial condition}). Then
 \begin{align*}
 \partial_{t}v^{\epsilon}\in L^{2}(0,t_{\ast};\mathbb{H}).
 \end{align*}
 \end{lemma}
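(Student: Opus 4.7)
The plan is to read $\partial_{t}v^{\epsilon}$ directly from equation~(\ref{ae1}),
\[
\partial_{t}v^{\epsilon}=\Delta v^{\epsilon}+\epsilon_{1}\partial_{p}\!\left(\!\left(\tfrac{gp}{R\bar{\theta}}\right)^{\!2}\partial_{p}v^{\epsilon}\right)-(v^{\epsilon}\cdot\nabla)v^{\epsilon}-w^{\epsilon}\partial_{p}v^{\epsilon}-\nabla\Phi^{\epsilon}-f(v^{\epsilon})^{\bot},
\]
and to bound each summand on the right-hand side in $L^{2}(0,t_{\ast};L^{2})$ uniformly in $\epsilon$. Because equation~(\ref{ae1}) preserves the compatibility condition $\nabla\cdot\int_{p_{0}}^{p_{1}}v^{\epsilon}\,dp=0$, the time derivative satisfies the same constraint automatically, so a six-term estimate directly delivers $\partial_{t}v^{\epsilon}\in L^{2}(0,t_{\ast};\mathbb{H})$.

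The linear diffusion parts are immediate from the previous two lemmas: $\Delta v^{\epsilon}\in L^{2}(0,t_{\ast};L^{2})$ is already known, and after expanding the vertical dissipation as $\epsilon_{1}(gp/R\bar{\theta})^{2}\partial_{p}^{2}v^{\epsilon}$ plus a term linear in $\partial_{p}v^{\epsilon}$, the weighted bound $\epsilon_{1}\|\partial_{p}^{2}v^{\epsilon}\|_{w}^{2}\le C_{3}$ yields an $L^{2}_{t,x}$ control (with an implicit prefactor $\sqrt{\epsilon_{1}}$). The Coriolis term is trivial, and for the geopotential gradient I would use the decomposition $\Phi^{\epsilon}=\Phi_{s}^{\epsilon}+\int_{p}^{p_{1}}(RT^{\epsilon}/p')\,dp'$, so that $\|\nabla\Phi^{\epsilon}\|_{L^{2}}$ is controlled by $\|\nabla\theta^{\epsilon}\|_{L^{2}}$ plus a barotropic contribution, both integrable in $L^{2}(0,t_{\ast})$ thanks to the hypothesis $\theta^{\epsilon}\in L^{2}(0,t_{1};H^{1})$ and the bounds for the mean flow established in the previous $H^{1}$ estimate.

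The delicate contributions are the two nonlinear transport terms. I would estimate $(v^{\epsilon}\cdot\nabla)v^{\epsilon}$ and $w^{\epsilon}\partial_{p}v^{\epsilon}$ in $L^{2}$ by the anisotropic trilinear inequalities of Lemmas~\ref{HHP} and~\ref{trilinear term lemma}, using $w^{\epsilon}=-\int_{p_{0}}^{p}\nabla\cdot v^{\epsilon}\,dp'$ to trade vertical velocity for horizontal derivatives of $v^{\epsilon}$. After this distribution of derivatives, the pointwise-in-time bound for $\|(v^{\epsilon}\cdot\nabla)v^{\epsilon}\|_{L^{2}}+\|w^{\epsilon}\partial_{p}v^{\epsilon}\|_{L^{2}}$ factors as a product of a term uniformly bounded in time (built from $\|v^{\epsilon}\|_{H^{1}}$ and $\|\partial_{p}v^{\epsilon}\|_{L^{2}}$) and a term square-integrable in time (built from $\|\Delta v^{\epsilon}\|_{L^{2}}$ and $\|\nabla\partial_{p}v^{\epsilon}\|_{L^{2}}$), both of which are supplied by Lemma~\ref{vpL2} and the preceding $H^{1}$ lemma.

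The main obstacle is precisely this nonlinear estimate: the absence of vertical viscosity forbids any uniform control of $\partial_{p}^{2}v^{\epsilon}$, so the vertical derivatives hidden inside the nonlinear terms must be absorbed through the anisotropic inequalities rather than by isotropic Sobolev embedding. Once the six summands are each bounded in $L^{2}(0,t_{\ast};L^{2})$ uniformly in $\epsilon$, the conclusion $\partial_{t}v^{\epsilon}\in L^{2}(0,t_{\ast};\mathbb{H})$ follows.
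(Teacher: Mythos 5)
Your proposal takes essentially the same route as the paper's proof: read $\partial_{t}v^{\epsilon}$ off the momentum equation and control each term by the previously established a priori bounds, using the anisotropic inequalities of Lemmas \ref{HHP} and \ref{trilinear term lemma} on the two transport terms so that $\partial_{p}^{2}v^{\epsilon}$ never appears. The paper phrases this in duality, pairing the equation with test functions $\tilde{v}\in L^{2}(0,t_{\ast};\mathbb{H})$ of norm at most one and checking that the resulting bound $A_{3}(t)$ is square integrable in time; apart from one point this is equivalent to your term-by-term $L^{2}(0,t_{\ast};L^{2})$ estimate.

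The one point is the barotropic pressure. You assert that $\nabla\Phi_{s}^{\epsilon}$ is integrable in $L^{2}(0,t_{\ast})$ ``thanks to\ldots the bounds for the mean flow,'' but no such bound is available: the surface geopotential is a Lagrange multiplier, and neither Lemma \ref{vpL2} nor the $H^{1}$ lemma controls $\|\nabla\Phi_{s}^{\epsilon}\|_{L^{2}}$. What actually saves the argument --- and what the duality formulation makes automatic --- is that $\Phi_{s}^{\epsilon}$ is independent of $p$, so
\begin{align*}
\int_{\mathcal{M}}\nabla\Phi_{s}^{\epsilon}\cdot\tilde{v}\,d\mathcal{M}
=\int_{\mathcal{M}'}\nabla\Phi_{s}^{\epsilon}\cdot\int_{p_{0}}^{p_{1}}\tilde{v}\,dp\,d\mathcal{M}'=0
\qquad\text{for every }\tilde{v}\in\mathbb{H},
\end{align*}
i.e. $\nabla\Phi_{s}^{\epsilon}$ is orthogonal to $\mathbb{H}$ and simply drops out; only the baroclinic part $\int_{p}^{p_{1}}(R/p')\nabla T^{\epsilon}\,dp'$ must be estimated, and that is controlled by $\|\theta^{\epsilon}\|_{H^{1}}$ as you say. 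With that correction (which is the same device the paper already used to kill $\int\nabla\Phi_{s}^{\epsilon}\cdot\Delta v^{\epsilon}$ in the $H^{1}$ lemma), the rest of your term-by-term estimates coincide with the paper's.
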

 \begin{proof}
 Choosing a test function $\tilde{v}\in L^{2}(0,t_{\ast};\mathbb{H})$ satisfying that $\|\tilde{v}\|_{L^{2}(0,t_{\ast};\mathbb{H})}\leq 1$, we can refer from equation (\ref{e1}) that
\begin{align*}
 \int_{\mathcal{M}}\partial_{t}v^{\epsilon}\cdot\tilde{v}d\mathcal{M}
 =&\int_{\mathcal{M}}\Delta v^{\epsilon}\cdot\tilde{v}d\mathcal{M}
 -\int_{\mathcal{M}}(v^{\epsilon}\cdot\nabla)v^{\epsilon}\cdot\tilde{v}+w^{\epsilon}\partial_{p}v^{\epsilon}\cdot\tilde{v}d\mathcal{M}\nonumber\\
 &-\int_{\mathcal{M}}\nabla\Phi^{\epsilon}\cdot\tilde{v}d\mathcal{M}
-\int_{\mathcal{M}}f{v^{\epsilon}}^{\bot}\cdot\tilde{v}d\mathcal{M}.
 \end{align*}
 It is easy to verify that
 \begin{align*}
 \int_{\mathcal{M}}\Delta v^{\epsilon}\cdot\tilde{v}d\mathcal{M}\leq C\|\Delta v^{\epsilon}\|_{L^{2}}\|\tilde{v}\|_{L^{2}},
 \end{align*}
   \begin{align*}
-\int_{\mathcal{M}}\nabla\Phi^{\epsilon}\cdot\tilde{v}d\mathcal{M}\leq C\|\theta^{\epsilon}\|_{H^{1}}\|\tilde{v}\|_{L^{2}},
 \end{align*}
 and
 \begin{align*}
-\int_{\mathcal{M}}f{v^{\epsilon}}^{\bot}\cdot\tilde{v}d\mathcal{M}\leq C\|v^{\epsilon}\|_{L^{2}}\|\tilde{v}\|_{L^{2}}.
 \end{align*}

 Then we consider the trilinear term. Through similar arguments as in (\ref{qvH1h1}) and (\ref{qvH1h2}), we can deduce that
 \begin{align*}
 &\int_{\mathcal{M}}(v^{\epsilon}\cdot\nabla)v^{\epsilon}\cdot\tilde{v}d\mathcal{M}\leq\int_{\mathcal{M}'} \int_{p_{0}}^{p_{1}} |v^{\epsilon}|+|\partial_{p}v^{\epsilon}|dp\int_{p_{0}}^{p_{1}} |\nabla v^{\epsilon}||\tilde{v}|dp\mathcal{M}'\nonumber\\
\leq& C \left(\|v^{\epsilon}\|_{L^2}^{\frac{1}{2}}+\|u^{\epsilon}\|_{L^2}^{\frac{1}{2}}\right)
\left(\|v^{\epsilon}\|_{L^{2}}^{\frac{1}{2}} + \|\nabla v^{\epsilon}\|_{L^{2}}^{\frac{1}{2}}+\|u^{\epsilon}\|_{L^2}^{\frac{1}{2}}+\|\nabla u^{\epsilon}\|_{L^2}^{\frac{1}{2}}\right)\|\nabla v^{\epsilon}\|_{L^{2}}^{\frac{1}{2}}\left(\|\nabla v^{\epsilon}\|_{L^{2}}^{\frac{1}{2}}+\right.\nonumber\\
&\left.\|\nabla^{2} v^{\epsilon}\|_{L^{2}}^{\frac{1}{2}}\right)\|\tilde{v}\|_{L^{2}}
\leq C\left(\|v^{\epsilon}\|_{H^{1}}^{2}+\|v^{\epsilon}\|_{H^{1}}\|\Delta v^{\epsilon}\|_{L^{2}}\right)\|\tilde{v}\|_{L^{2}},
 \end{align*}
 and
\begin{align*}
 &\int_{\mathcal{M}}w^{\epsilon}\partial_{p}v^{\epsilon}\cdot\tilde{v}d\mathcal{M}
 \leq\int_{\mathcal{M}'}\int_{p_{0}}^{p_{1}}|\nabla v^{\epsilon}|dp\int_{p_{0}}^{p_{1}}|\partial_{p}v^{\epsilon}||\tilde{v}|dpd\mathcal{M}'\nonumber\\
 \leq& C\|\tilde{v}\|_{L^{2}}\|\nabla v^{\epsilon}\|_{L^{2}}^{\frac{1}{2}}\left(
 \|\nabla v^{\epsilon}\|_{L^{2}}^{\frac{1}{2}}+\|\nabla^{2} v^{\epsilon}\|_{L^{2}}^{\frac{1}{2}}\right)\|\partial_{p}v^{\epsilon}\|_{L^{2}}^{\frac{1}{2}}\left(
 \|\partial_{p}v^{\epsilon}\|_{L^{2}}^{\frac{1}{2}}+\|\nabla \partial_{p}v^{\epsilon}\|_{L^{2}}^{\frac{1}{2}}\right)\nonumber\\
 \leq& C\left(\|v^{\epsilon}\|_{H^{1}}^{2}+\|v^{\epsilon}\|_{H^{1}}^{3}+\|\Delta v^{\epsilon}\|_{L^{2}}+\|\nabla\partial_{p}v^{\epsilon}\|_{L^{2}}+\|v^{\epsilon}\|_{H^{1}}\|\Delta v^{\epsilon}\|_{L^{2}}+ \|v^{\epsilon}\|_{H^{1}}\|\nabla\partial_{p}v^{\epsilon}\|_{L^{2}}\right)\|\tilde{v}\|_{L^{2}}.
 \end{align*}
 Combining the above inequalities, we have
 \begin{align*}
 \int_{\mathcal{M}}\partial_{t}v^{\epsilon}\cdot\tilde{v}d\mathcal{M}\leq A_{3}(t)\|\tilde{v}\|_{L^{2}}\leq A_{3}(t),
 \end{align*}
 where
 \begin{align*}
 A_{3}(t)=&\|\Delta v^{\epsilon}\|_{L^{2}}+\|\theta^{\epsilon}\|_{H^{1}}+\|v^{\epsilon}\|_{L^{2}}+
 \|v^{\epsilon}\|_{H^{1}}^{2}+\|v^{\epsilon}\|_{H^{1}}\|\Delta v^{\epsilon}\|_{L^{2}}+\|v^{\epsilon}\|_{H^{1}}^{2}+\|v^{\epsilon}\|_{H^{1}}^{3}\nonumber\\
 &+\|\Delta v^{\epsilon}\|_{L^{2}}+\|\nabla\partial_{p}v^{\epsilon}\|_{L^{2}}+\|v^{\epsilon}\|_{H^{1}}\|\Delta v^{\epsilon}\|_{L^{2}}+ \|v^{\epsilon}\|_{H^{1}}\|\nabla\partial_{p}v^{\epsilon}\|_{L^{2}}.
 \end{align*}
 Considering regularities of $v^{\epsilon}$ and $\theta^{\epsilon}$, we can know that $A_{3}(t)$ is $L^{2}$ integrable in $(0,t_{\ast})$. Then we can deduce that
 $\partial_{t}v^{\epsilon}\in L^{2}(0,t_{\ast};\mathbb{H})$.
\end{proof}

 As for the time regularities of $\theta,q_{v},q_{c},q_{r}$, we can deal with them in a similar way. For more detailed procedures, we refer readers to \cite{Zelati} or \cite{TanLiu}. It should be noted that the treatment of trilinear terms can be referred to (\ref{qvH1h1}) and (\ref{qvH1h2}). As a conclusion, we can get that
 \begin{align}
 \partial_{t}\theta,\partial_{t}q_{j}\in L^{2}(0,t_{\ast};H^{-1}),j\in \{v,c,r\}.
 \end{align}
So far, we have obtained the local existence of quasi-strong solutions to the approximated system.
 \begin{proposition}
 Let $v_{0}\in\mathbb{V}$, $\theta_{0}, q_{v0},q_{r0},q_{c0}\in L^{2}$, and $\epsilon>0$ be fixed. Then there exists $t_{\ast}>0\ (t_{\ast}\leq t_{1}, \text{independent of $\epsilon$})$, and a quasi-strong solution $(v^{\epsilon},\theta^{\epsilon},q_{v}^{\epsilon},q_{c}^{\epsilon},q_{r}^{\epsilon})$ to equations (\ref{ae1})-(\ref{ae5}) with boundary condition (\ref{boundary condition}) and initial condition (\ref{initial condition}) satisfying that
\begin{align*}
&v^{\epsilon}\in L^{\infty}(0,t_{\ast};\mathbb{V}),\ \Delta v^{\epsilon},\nabla\partial_{p}v^{\epsilon}\in L^{2}(0,t_{\ast}; (L^{2})^{2}), \nonumber\\
&(\theta_{\epsilon},q_{v}^{\epsilon},q_{c}^{\epsilon},q_{r}^{\epsilon})\in C(0,t_{\ast};(L^{2})^{4})\cap L^{2}(0,t_{\ast}; (H^{1})^{4}).
\end{align*}
Moreover,
\begin{align}\label{quasistrong bound}
&\|v^{\epsilon}\|
_{L^{\infty}(0,t_{\ast};\mathbb{V})}
+\|\left(\theta^{\epsilon},q_{v}^{\epsilon},q_{c}^{\epsilon},q_{r}^{\epsilon}\right)\|_{L^{\infty}(0,t_{\ast};L^{2})}
+\|(\Delta v^{\epsilon},\nabla\partial_{p}v^{\epsilon},\epsilon\partial_{p}^{2}v^{\epsilon})\|_{L^{2}(0,t_{\ast};L^{2})}\nonumber\\
&+\|\left(\theta^{\epsilon},q_{v}^{\epsilon},q_{c}^{\epsilon},q_{r}^{\epsilon}\right)\|_{L^{2}(0,t_{\ast};H^{1})}
+\|\partial_{t}v^{\epsilon}\|
_{L^{2}(0,t_{\ast};\mathbb{H})}
+\|\partial_{t}(\theta^{\epsilon},q_{v}^{\epsilon},q_{c}^{\epsilon},q_{r}^{\epsilon})\|_{L^{2}\left(0,t_{\ast};H^{-1}\right)}
\leq \mathcal{C},
\end{align}
where $\mathcal{C}$ is monotonically increasing positive function with respect to $t_{\ast}$, which is independent of $\epsilon$, and depends on the initial data.
\end{proposition}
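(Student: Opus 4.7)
The proof is essentially a bookkeeping exercise that consolidates the a priori estimates already established: the existence of a global-in-time (for each fixed $\epsilon$) solution is delivered by Proposition 3.1, so the real content is to extract a common lifespan $t_\ast$ on which every norm appearing in (\ref{quasistrong bound}) is controlled by a quantity $\mathcal{C}$ independent of $\epsilon$. The plan is therefore to fix an arbitrary $t_1 > 0$, invoke Proposition 3.1 on $[0,t_1]$ to obtain a solution family $(v^\epsilon,\theta^\epsilon,q_v^\epsilon,q_c^\epsilon,q_r^\epsilon)$, and then collect the bounds in the order they were proved.

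First I would record Lemma \ref{l2prior}, which gives the $L^\infty_t L^2$ and $L^2_t H^1$ control of $(v^\epsilon,\theta^\epsilon,q_j^\epsilon)$ by a constant $C_1$ that depends only on the $L^2$ norms of the initial data and on $t_1$, not on $\epsilon$. With this in hand I would feed $\theta^\epsilon \in L^\infty(0,t_1;L^2)\cap L^2(0,t_1;H^1)$ into Lemma \ref{vpL2} to upgrade $\partial_p v^\epsilon$ to $L^\infty_t L^2$ with its gradient in $L^2_t L^2$; this is the step where the lifespan shrinks from $t_1$ to some $t_\ast \le t_1$, because the generalized Bihari--Lasalle inequality only yields a bound while the argument of $\mathcal{G}^{-1}$ stays in its domain. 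Crucially, the threshold for $\mathcal{G}^{-1}$ depends on $\int_0^t(\|\nabla v^\epsilon\|_{L^2}^2+\|\nabla\theta^\epsilon\|_{L^2}^2+1)\,ds$, which is bounded by $C_1$ uniformly in $\epsilon$; this is precisely why $t_\ast$ can be chosen independent of $\epsilon$. I would then apply the subsequent $H^1$ lemma for $v^\epsilon$ on the same interval $(0,t_\ast)$, since the coefficients $A_1, A_2$ in that lemma are $L^1(0,t_\ast)$ functions made only of the norms already controlled by $C_1$ and $C_2$.

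Having the spatial regularity in place, the time-derivative bound $\partial_t v^\epsilon \in L^2(0,t_\ast;\mathbb{H})$ follows from Lemma \ref{trv}. The parallel estimates $\partial_t\theta^\epsilon,\partial_t q_j^\epsilon \in L^2(0,t_\ast;H^{-1})$ are obtained by testing equations (\ref{ae2})--(\ref{ae5}) against functions in $H^1$ and bounding each term: the transport terms by the Ladyzhenskaya-type inequality of Lemma \ref{trilinear term lemma} (using the already-controlled $H^1$ norm of $v^\epsilon$ and $H^1$ norms of $\theta^\epsilon,q_j^\epsilon$), the viscous terms by Cauchy--Schwarz, and the source terms (including the $w^\epsilon F \mathcal{H}_{\epsilon_2}$ contribution) by the uniform $L^\infty$ bound on $F$ and $\mathcal{H}_{\epsilon_2}$ from (\ref{H-epsilon21}). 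Since none of these bounds involves $\epsilon$ outside the combinations already shown to be $\epsilon$-independent, assembling them yields (\ref{quasistrong bound}) with $\mathcal{C}=\mathcal{C}(t_\ast)$ monotone in $t_\ast$ and independent of $\epsilon$.

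The only genuinely delicate step is the uniformity of $t_\ast$ in $\epsilon$, which hinges on two observations I would emphasize explicitly: (i) the function $\mathcal{G}(r)=\tfrac{2}{\sqrt 3}\arctan\bigl(\tfrac{1+2r}{\sqrt 3}\bigr)-\tfrac{\pi}{3\sqrt 3}$ has a bounded range $\bigl[0,\tfrac{\pi}{\sqrt 3}-\tfrac{\pi}{3\sqrt 3}\bigr)$, so $\mathcal{G}^{-1}$ has a finite domain and a blow-up time $t_\ast$ determined solely by $\|\partial_p v_0\|_{L^2}$ and by the $L^1_t$ bound on $\|\nabla v^\epsilon\|_{L^2}^2+\|\nabla\theta^\epsilon\|_{L^2}^2$, both of which are $\epsilon$-independent; (ii) the $\epsilon_1$-dependent dissipative term $\epsilon_1\|\partial_p u^\epsilon\|_w^2$ appears only with a favorable sign on the left-hand side in Lemma \ref{vpL2}, so the a priori argument never demands a lower bound on $\epsilon_1$. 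Once these two points are verified, (\ref{quasistrong bound}) follows, and the regularity assertions in the proposition are read off directly.
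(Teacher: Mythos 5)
Your proposal is correct and follows essentially the same route as the paper: the proposition is proved there precisely by concatenating Lemma \ref{l2prior}, Lemma \ref{vpL2}, the subsequent $H^{1}$-lemma for $v^{\epsilon}$, and Lemma \ref{trv} together with the analogous $H^{-1}$ time-regularity estimates for $(\theta^{\epsilon},q_{j}^{\epsilon})$, and your identification of the $\epsilon$-independence of $t_{\ast}$ with the finite range of $\mathcal{G}$ and the $\epsilon$-uniform $L^{1}_{t}$ bound on $\|\nabla v^{\epsilon}\|_{L^{2}}^{2}+\|\nabla\theta^{\epsilon}\|_{L^{2}}^{2}$ is exactly the mechanism the paper relies on.
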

\subsection{The local existence of quasi-strong solutions}
In this subsection, we obtain the local existence of quasi-strong solutions to equations (\ref{e1})-(\ref{e7}) by passing to the limit in the approximated equations  (\ref{ae1})-(\ref{ae5}) as $\epsilon\rightarrow 0$. As usual, we denote strong, weak, and weak-$\ast$ convergence as $\epsilon\rightarrow 0$ by $\rightarrow,\rightharpoonup,\stackrel{\ast}{\rightharpoonup}$ respectively. Considering inequality (\ref{quasistrong bound}) and Aubin-Lions compactness theorem \cite{Aubin,Lions2}, we can deduce the existence of a subsequence, still denoted by $v^{\epsilon},\theta^{\epsilon},q_{v}^{\epsilon},q_{c}^{\epsilon},q_{r}^{\epsilon},$ and
\begin{align}
&v\in C(0,t_{\ast};\mathbb{V}), \ \Delta v,\nabla\partial_{p}v\in L^{2}(0,t_{\ast};L^{2}),\ \partial_{t}v\in L^{2}(0,t_{\ast};\mathbb{H}),\nonumber\\
&\theta,q_{v},q_{c},q_{r}\in C(0,t_{\ast};L^{2})\cap L^{2}(0,t_{\ast};H^{1}),\ \partial_{t}\theta,\partial_{t}q_{v},\partial_{t}q_{c},\partial_{t}q_{r}\in L^{2}(0,t_{\ast};H^{-1})
\end{align}
such that, as $\epsilon\rightarrow 0$,
\begin{align*}
&\ v^{\epsilon}\stackrel{\ast}{\rightharpoonup} v\  \text{in}\ C(0,t_{\ast};\mathbb{V}), \\
 &(\Delta v^{\epsilon},\nabla\partial_{p}v^{\epsilon}){\rightharpoonup} (\Delta v,\nabla\partial_{p}v) \ \text{in}\  L^{2}(0,t_{\ast};(L^{2})^{2}),\\ &\partial_{t}v^{\epsilon}\rightharpoonup\partial_{t} v\ \text{in}\ L^{2}(0,t_{\ast};\mathbb{H}),
\end{align*}
and
\begin{align*}
&\left(\theta^{\epsilon},q_{v}^{\epsilon},q_{c}^{\epsilon},q_{r}^{\epsilon}\right)\rightharpoonup\left(\theta,q_{v},q_{c},q_{r}\right)
\ \text{in}\ \ L^{2}(0,t_{\ast};(H^{1})^{4}),\\
&\left(\theta^{\epsilon},q_{v}^{\epsilon},q_{c}^{\epsilon},q_{r}^{\epsilon}\right)\stackrel{\ast}{\rightharpoonup}\left(\theta,q_{v},q_{c},q_{r}\right)
\ \text{in}\ \ L^{\infty}(0,t_{\ast};(L^{2})^{4}),\ \nonumber\\
&\partial_{t}(\theta^{\epsilon},q_{v}^{\epsilon},q_{c}^{\epsilon},q_{r}^{\epsilon})\rightharpoonup \partial_{t}(\theta,q_{v},q_{c},q_{r})\ \text{in}\ L^{2}\left(0,t_{\ast};(H^{-1})^{4}\right),\\
&\left(\theta^{\epsilon},q_{v}^{\epsilon},q_{c}^{\epsilon},q_{r}^{\epsilon}\right)\rightarrow\left(\theta,q_{v},q_{c},q_{r}\right)
\ \text{in}\ \ L^{2}(0,t_{\ast};(L^{2})^{4}).
\end{align*}

As to the proof of convergence between the approximated system (\ref{ae1})-(\ref{ae5}) and the original system (\ref{e1})-(\ref{e7}), we refer readers to \cite{CaoTiti6}, \cite{Cao1} and \cite{Zelati}.
In fact, we have proved the local existence of quasi-strong solutions of system (\ref{e1})-(\ref{e7}).
\begin{proposition}\label{quasistrong existence local}
 Let $v_{0}\in\mathbb{V}$, $\theta_{0}, q_{v0}, q_{r0},q_{c0}\in L^{2}(\mathcal{M})$. Then there exists $t_{\ast}>0\ (t_{\ast}\leq t_{1}$), and a quasi-strong solution $(v,\theta,q_{v},q_{c},q_{r})$ to equations (\ref{e1})-(\ref{e7}) with boundary condition (\ref{boundary condition}) and initial condition (\ref{initial condition}) in time interval $(0,t_{\ast})$. Moreover,
\begin{align*}
&v\in C(0,t_{\ast};\mathbb{V}),\ \ \Delta v,\nabla\partial_{p}v\in L^{2}(0,t_{\ast}; L^{2}), \nonumber\\
&(\theta,q_{v},q_{c},q_{r})\in C(0,t_{\ast};(L^{2})^{4})\cap L^{2}(0,t_{\ast}; (H^{1})^{4}),\nonumber\\
&\partial_{t}v\in L^{2}(0,t_{\ast};\mathbb{H}),\ \
\partial_{t}(\theta,q_{v},q_{c},q_{r})\in L^{2}(0,t_{\ast}; (H^{-1})^{4}).
\end{align*}
\end{proposition}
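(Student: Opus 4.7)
The plan is to obtain the limit solution by a standard compactness argument applied to the approximate solutions $(v^{\epsilon},\theta^{\epsilon},q_{v}^{\epsilon},q_{c}^{\epsilon},q_{r}^{\epsilon})$, using the uniform bounds collected in \eqref{quasistrong bound}. First, I would invoke Banach--Alaoglu to extract a subsequence with the weak and weak$^{*}$ convergences already listed in the excerpt, and then apply the Aubin--Lions--Simon compactness theorem in the triples $(\mathbb{V},\mathbb{H},\mathbb{H})$ and $(H^{1},L^{2},H^{-1})$ to upgrade these to strong convergence of $v^{\epsilon}\to v$ in $L^{2}(0,t_{\ast};\mathbb{H})$ and of $(\theta^{\epsilon},q_{v}^{\epsilon},q_{c}^{\epsilon},q_{r}^{\epsilon})\to(\theta,q_{v},q_{c},q_{r})$ in $L^{2}(0,t_{\ast};(L^{2})^{4})$, and also a.e. convergence along a further subsequence. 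The divergence-free integral constraint that defines $\mathbb{V}$ passes to the limit by linearity, so the limit lies in the required function spaces.

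Second, I would write each approximate equation in weak form against a smooth test function (solenoidal in the case of \eqref{ae1}) and pass to the limit term by term. The artificial vertical viscosity $-\epsilon_{1}\partial_{p}\bigl((gp/R\bar{\theta})^{2}\partial_{p}v^{\epsilon}\bigr)$ vanishes because \eqref{quasistrong bound} provides a uniform bound on $\epsilon\partial_{p}^{2}v^{\epsilon}$ in $L^{2}$, whereas the test function is $\epsilon$-independent; the Coriolis, geopotential gradient and diffusion terms converge by the listed weak convergences together with the relation $\partial_{p}\Phi=-RT/p$, which links $\nabla\Phi^{\epsilon}$ to $\nabla\theta^{\epsilon}$ in $L^{2}(0,t_{\ast};L^{2})$. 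The continuity equation and the boundary trace of $w^{\epsilon}$ then give $w^{\epsilon}\rightharpoonup w$ in $L^{2}(0,t_{\ast};H^{1})$ with control of $\partial_{p}w^{\epsilon}$, so that $w^{\epsilon}\to w$ strongly in $L^{2}(0,t_{\ast};L^{2})$.

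Third, the nonlinear convective terms $(v^{\epsilon}\cdot\nabla)v^{\epsilon}$, $w^{\epsilon}\partial_{p}v^{\epsilon}$, $v^{\epsilon}\cdot\nabla q_{j}^{\epsilon}$, $w^{\epsilon}\partial_{p}q_{j}^{\epsilon}$ and the source terms $f_{\theta^{\epsilon}},f_{q_{j}^{\epsilon}}$ are handled by combining strong convergence of one factor with weak convergence of the derivative of the other, exactly as in the trilinear estimates of Lemma \ref{HHP} and Lemma \ref{trilinear term lemma}, which I have already used in the a priori analysis; the cutoff $\tau(q_{r}^{\epsilon})$ and the Lipschitz function $F$ are continuous in the a.e. limit, so these pass to the limit by dominated convergence. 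The forcing $\tilde{F}$ and $\Pi$ are fixed bounded functions.

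The main obstacle is the transient phase-transition term $w^{\epsilon,-}\tilde{F}\mathcal{H}_{\epsilon_{2}}(q_{v}^{\epsilon}-q_{vs})$ (and its analogues), where one must simultaneously let $\epsilon_{2}\to0$ and recover the multi-valued inclusion involving $\mathcal{H}$. My plan is the approach of \cite{Zelati,Cao1}: the uniform bound $|\mathcal{H}_{\epsilon_{2}}|\le1$ gives weak-$*$ convergence of $\mathcal{H}_{\epsilon_{2}}(q_{v}^{\epsilon}-q_{vs})$ to some $h_{q}\in L^{\infty}(\mathcal{M}\times(0,t_{\ast}))$ with $|h_{q}|\le1$, and $w^{\epsilon,-}\to w^{-}$ strongly in $L^{2}$. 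Combined with strong convergence of $q_{v}^{\epsilon}$, a standard Minty-type argument based on the monotonicity of the convex function $r\mapsto[r-q_{vs}]^{+}$ and on the subdifferential characterization of $\mathcal{H}_{\epsilon_{2}}$ shows that $h_{q}$ satisfies the variational inequality \eqref{variational inequality}, i.e. $h_{q}\in\mathcal{H}(q_{v}-q_{vs})$ in the multi-valued sense; this is precisely what is required to qualify the limit as a quasi-strong solution in the sense of Definition~2.1. Initial data are attained because $(v^{\epsilon},\theta^{\epsilon},q_{j}^{\epsilon})\in C(0,t_{\ast};\cdot)$ and the convergences are strong enough at $t=0$.
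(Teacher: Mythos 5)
Your proposal is correct and follows essentially the same route as the paper: uniform bounds from \eqref{quasistrong bound}, subsequence extraction via Banach--Alaoglu and the Aubin--Lions theorem, and term-by-term passage to the limit, with the Minty-type monotonicity argument for recovering the multi-valued inclusion $h_{q}\in\mathcal{H}(q_{v}-q_{vs})$. The only difference is one of presentation: the paper states the convergences and delegates the limit passage in the nonlinear and Heaviside terms to \cite{CaoTiti6,Cao1,Zelati}, whereas you spell out those same arguments explicitly.
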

\subsection{The local existence of strong solutions}
Compared with the case where the velocity equation is full viscosity, the most difficulty during the proof of existence of strong solution is caused by the absence of vertical dissipation in the velocity fields, which makes that $\|\partial_{p}^{2}v\|_{L^{2}(0,t;L^{2})}$ can not be bounded. Therefore, our main work is to ensure that the a priori estimates does not include this item. Here, we take $q_{v}$ as an example to illustrate how to get the $H^{1}$ regularity. As for other quantities, one can get the $H^{1}$ regularity by combining calculations in literatures \cite{Zelati,TanLiu,Hittmeir} and this paper.
\begin{lemma}\label{H1}
Let $v_{0}\in\mathbb{V}$, $\theta_{0}, q_{v0}, q_{r0},q_{c0}\in H^{1}$. Then there exists $t_{\ast}\in (0,t_{1})$ and a solution $q_{v}$ to equation (\ref{e5}) with boundary condition (\ref{boundary condition}) and initial condition (\ref{initial condition}). Moreover
\begin{align}
\sup_{0\leq t\leq t_{\ast}}\|q_{v}\|_{H^{1}}^{2}
+\int_{0}^{t_{\ast}}\|q_{v}\|_{H^{2}}^{2}dt\leq C_{4},
\end{align}
where $C_{4}$ depends on the initial data and $t_{\ast}$.
\end{lemma}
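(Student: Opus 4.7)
The plan is to work at the regularized level provided by Proposition 3.1(ii), derive a uniform-in-$\epsilon$ $H^{1}$ bound for the approximate solution $q_{v}^{\epsilon}$, and transfer it to $q_{v}$ by the same Aubin--Lions compactness used in the previous subsection, the multi-valued Heaviside being recovered from the variational inequality (\ref{variational inequality}). The energy identity is obtained by testing (\ref{ae3}) against $\mathcal{A}_{q_{v}}q_{v}^{\epsilon}$ in $L^{2}(\mathcal{M})$. After integration by parts and handling the Robin boundary contributions on $\Gamma_{i},\Gamma_{l}$ with the trace inequality exactly as in (\ref{L2-theta-0})--(\ref{L2-theta-3}), the left-hand side produces $\frac{1}{2}\frac{d}{dt}(\|\nabla q_{v}^{\epsilon}\|_{L^{2}}^{2}+\|\partial_{p}q_{v}^{\epsilon}\|_{w}^{2})$ together with a positive multiple of $\|\mathcal{A}_{q_{v}}q_{v}^{\epsilon}\|_{L^{2}}^{2}$, the latter dominating the $H^{2}$-norm of $q_{v}^{\epsilon}$ up to lower order.

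The source contributions on the right are routine: $f_{q_{v}^{\epsilon}}=k_{3}\tau(q_{r}^{\epsilon})(q_{vs}-q_{v}^{\epsilon})^{+}$ is linearly bounded in $q_{v}^{\epsilon}$ by the definition (\ref{tau}), and the saturation term $-(w^{\epsilon})^{-}F\mathcal{H}_{\epsilon_{2}}$ is dominated by $C\|w^{\epsilon}\|_{L^{2}}\le C\|\nabla v^{\epsilon}\|_{L^{2}}$ through the incompressibility relation $w^{\epsilon}=-\int_{p_{0}}^{p}\nabla\cdot v^{\epsilon}\,dp'$ and the uniform boundedness of $F$ and $\mathcal{H}_{\epsilon_{2}}$ in (\ref{H-epsilon21}). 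Both are absorbed by a small fraction of $\|\mathcal{A}_{q_{v}}q_{v}^{\epsilon}\|_{L^{2}}^{2}$ after Young's inequality.

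The hard part is the convective pair $\int(v^{\epsilon}\cdot\nabla)q_{v}^{\epsilon}\cdot\mathcal{A}_{q_{v}}q_{v}^{\epsilon}\,d\mathcal{M}$ and $\int w^{\epsilon}\partial_{p}q_{v}^{\epsilon}\cdot\mathcal{A}_{q_{v}}q_{v}^{\epsilon}\,d\mathcal{M}$. Because the $v$-equation carries no vertical viscosity, $\|\partial_{p}^{2}v^{\epsilon}\|_{L^{2}(L^{2})}$ is not uniformly bounded in $\epsilon$, so the classical $L^{\infty}_{p}$ bound on $w^{\epsilon}$ is unavailable. I circumvent this by substituting $w^{\epsilon}=-\int_{p_{0}}^{p}\nabla\cdot v^{\epsilon}\,dp'$ to get
\begin{align*}
\Bigl|\int_{\mathcal{M}}w^{\epsilon}\,\partial_{p}q_{v}^{\epsilon}\cdot\mathcal{A}_{q_{v}}q_{v}^{\epsilon}\,d\mathcal{M}\Bigr|
\le \int_{\mathcal{M}'}\!\int_{p_{0}}^{p_{1}}\!|\nabla v^{\epsilon}|\,dp\int_{p_{0}}^{p_{1}}\!|\partial_{p}q_{v}^{\epsilon}|\,|\mathcal{A}_{q_{v}}q_{v}^{\epsilon}|\,dp\,d\mathcal{M}',
\end{align*}
and invoking Lemma \ref{trilinear term lemma} with $f=|\nabla v^{\epsilon}|$, $g=|\partial_{p}q_{v}^{\epsilon}|$, $h=|\mathcal{A}_{q_{v}}q_{v}^{\epsilon}|$. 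The horizontal advection is treated analogously after the pointwise bound $|v^{\epsilon}(x,y,p)|\le C\int_{p_{0}}^{p_{1}}(|v^{\epsilon}|+|\partial_{p}v^{\epsilon}|)\,dp$ and a second application of Lemma \ref{trilinear term lemma}. In both cases the resulting estimates involve only $\|v^{\epsilon}\|_{H^{1}}$, $\|\Delta v^{\epsilon}\|_{L^{2}}$, $\|\nabla\partial_{p}v^{\epsilon}\|_{L^{2}}$ on the velocity side and $\|q_{v}^{\epsilon}\|_{H^{1}}$, $\|\nabla\partial_{p}q_{v}^{\epsilon}\|_{L^{2}}$, $\|\mathcal{A}_{q_{v}}q_{v}^{\epsilon}\|_{L^{2}}$ on the humidity side, so $\partial_{p}^{2}v^{\epsilon}$ never enters.

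Absorbing the highest-order pieces into the dissipative left-hand side via Young yields a differential inequality of the form $\frac{d}{dt}\|q_{v}^{\epsilon}\|_{H^{1}}^{2}+\|q_{v}^{\epsilon}\|_{H^{2}}^{2}\le B(t)\|q_{v}^{\epsilon}\|_{H^{1}}^{2}+G(t)$, where $B,G\in L^{1}(0,t_{*})$ by the $\epsilon$-independent bounds of Lemmas \ref{l2prior}--\ref{trv}. Gronwall then produces the claimed uniform estimate on $[0,t_{*}]$ for $t_{*}>0$ suitably chosen, and the Aubin--Lions passage to the limit $\epsilon\to 0$ transfers the bound to $q_{v}$.
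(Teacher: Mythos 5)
Your proposal is correct and follows essentially the same route as the paper: an $H^{1}$ energy estimate in which the anisotropic trilinear inequality of Lemma \ref{trilinear term lemma} (with the second-order humidity factor placed in the slot that requires only its $L^{2}$ norm, since its gradient is uncontrolled) is used to avoid $\|\partial_{p}^{2}v\|_{L^{2}}$, followed by Gronwall with coefficients that are integrable in time thanks to the earlier uniform velocity estimates. The only cosmetic difference is that you test once against $\mathcal{A}_{q_{v}}q_{v}^{\epsilon}$ while the paper tests separately against $-\Delta q_{v}$ and $-\partial_{p}^{2}q_{v}$ and adds the two inequalities; this obliges you to check in addition that $\|\mathcal{A}_{q_{v}}q_{v}^{\epsilon}\|_{L^{2}}^{2}$ controls the full second-order seminorm (the cross term is handled by one more integration by parts, exactly as in the paper's estimate (\ref{qvH103})) and that the boundary contribution of $\int\partial_{t}q_{v}^{\epsilon}\,\mathcal{A}_{q_{v}}q_{v}^{\epsilon}$ is absorbed into the time derivative of a boundary functional, as in (\ref{qvH1h3}), rather than merely estimated by the trace inequality.
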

\begin{proof}
By taking the inner product of equation (\ref{ae3}) with $-\Delta q_{v}$ in the $L^{2}$ space, we can infer that
\begin{align}\label{qvH1H}
&-\int_{\mathcal{M}}\partial_{t}q_{v}\Delta q_{v}d\mathcal{M}
-\int_{\mathcal{M}}\mathcal{A}_{q_{v}}q_{v}\Delta q_{v}d\mathcal{M}\nonumber\\
= &\int_{\mathcal{M}}\left(v\cdot\nabla q_{v}+w\partial_{p}q_{v}\right)\Delta q_{v}d\mathcal{M}
-\int_{\mathcal{M}}\left(f_{q_{v}}-w^{-}Fh_{q_{v}}\right)\Delta q_{v}d\mathcal{M}.
\end{align}
Utilizing similar arguments as in \cite{Hittmeir} and \cite{TanLiu}, we can obtain that
\begin{align*}
-\int_{\mathcal{M}}\partial_{t}q_{v}\Delta q_{v}d\mathcal{M}
\geq\frac{d}{dt}\left(\frac{1}{2}\|\nabla q_{v}\|_{L^{2}}^{2}+
\alpha_{lv}\int_{\Gamma_{l}}\left(\frac{1}{2}(q_{v})^{2}-q_{v}q_{blv}\right)d\Gamma_{l}\right)
-C(1+\|q_{v}\|_{L_{2}}^{2}+\|\nabla q_{v}\|_{L_{2}}^{2}),
\end{align*}
and
\begin{align*}
-\int_{\mathcal{M}}\mathcal{A}_{q_{v}}q_{v}\Delta q_{v}d\mathcal{M}
\geq\|\Delta q_{v}\|_{L^{2}}^{2}+\|\nabla\partial_{p}q_{v}\|_{w}^{2}-C.
\end{align*}
For the source term, recalling definitions of $F,f_{q_{v}}$ and $h_{q_{v}}$, we have
\begin{align}\label{qvH1050}
&\int_{\mathcal{M}}\left(f_{q_{v}}-{w}^{-}Fh_{q_{v}}\right)\Delta q_{v}d\mathcal{M}
\leq\|f_{q_{v}}-{w}^{-}Fh_{q_{v}}\|_{L^{2}}\|\Delta q_{v}\|_{L^{2}}
\nonumber\\
\leq&\left(\left\|k_{3}\tau(q_{r})(q_{vs}-q_{v})^{+}\right\|_{L^{2}}
+\left\|{w}^{-}Fh_{q_{v}}\right\|_{L^{2}}\right)
\|\Delta q_{v}\|_{L^{2}}.
\end{align}
Using the fact that $q_{vs}$ is a constant and $\tau(q_{r})$ is uniformly bounded, we have
\begin{align}\label{fqv1}
\left\|k_{3}\tau(q_{r})(q_{vs}-q_{v})^{+}\right\|_{L^{2}}\leq C(1+\|q_{v}\|_{L^{2}}^{2}).
\end{align}
As verified in \cite{Zelati}, the function $F$ and $h_{q}$ are uniformly bounded. Then
\begin{align}\label{F1}
\left\|w^{-}Fh_{q_{v}}\right\|_{L^{2}}
\leq C\|w\|_{L^{2}}\leq C\|\nabla v\|_{L^{2}},
\end{align}
where we have used $\partial_{p}w=-\nabla\cdot v$. Thus
\begin{align}
\int_{\mathcal{M}}\left(f_{q_{v}}-{w}^{-}Fh_{q_{v}}\right)\Delta q_{v}d\mathcal{M}
\leq C(1+\|q_{v}\|_{L^{2}}^{2}+\|\nabla v\|_{L^{2}}^{2})+\frac{1}{8}\|\Delta q_{v}\|_{L^{2}}^{2}.
\end{align}

Then we consider the trilinear term
\begin{align*}
\int_{\mathcal{M}}\left(v\cdot\nabla q_{v}+w\partial_{p}q_{v}\right)\Delta q_{v}d\mathcal{M}.
\end{align*}
Noting the fact that
\begin{align*}
f(p)\leq C\int_{p_{0}}^{p_{1}}|f|dp+C\int_{p_{0}}^{p_{1}}|\partial_{p}f|dp
 \end{align*}
 and using the inequality in Lemma \ref{trilinear term lemma}, we can deduce that
 \begin{align}\label{qvH1h1}
 &\int_{\mathcal{M}}(v\cdot\nabla)q_{v}\cdot\Delta q_{v}d\mathcal{M}\leq\int_{\mathcal{M}'} \int_{p_{0}}^{p_{1}} |v|+|\partial_{p}v|dp\int_{p_{0}}^{p_{1}} |\nabla q_{v}||\Delta q_{v}|dp\mathcal{M}'\nonumber\\
\leq& C \left(\|v\|_{L^2}^{\frac{1}{2}}+\|u\|_{L^2}^{\frac{1}{2}}\right)
\left(\|v\|_{L^{2}}^{\frac{1}{2}} + \|\nabla v\|_{L^{2}}^{\frac{1}{2}}+\|u\|_{L^2}^{\frac{1}{2}}+\|\nabla u\|_{L^2}^{\frac{1}{2}}\right)\|\Delta q_{v}\|_{L^{2}}\cdot\nonumber\\
&\ \ \  \|\nabla q_{v}\|_{L^{2}}^{\frac{1}{2}}\left(\|\nabla q_{v}\|_{L^{2}}^{\frac{1}{2}}+ \|\nabla^{2} q_{v}\|_{L^{2}}^{\frac{1}{2}}\right)\nonumber\\
\leq&C\left(\|v\|_{L^{2}}^{2}+\|u\|_{L^{2}}^{2}+\|\nabla v\|_{L^{2}}^{2}+\|\nabla u\|_{L^{2}}^{2}
+\|v\|_{L^{2}}^{4}+\|\nabla v\|_{L^{2}}^{2}\|v\|_{L^{2}}^{2}+\|u\|_{L^{2}}^{2}\|\nabla v\|_{L^{2}}^{2}\right.\nonumber\\
&\left.+\|u\|_{L^{2}}^{4}+\|v\|_{L^{2}}^{2}\|\nabla u\|_{L^{2}}^{2}+
\|u\|_{L^{2}}^{2}\|\nabla u\|_{L^{2}}^{2}\right)\|\nabla q_{v}\|_{L^{2}}^{2}+\frac{1}{8}\|\Delta q_{v}\|_{L^{2}}^{2}.
 \end{align}
 Similarly,
\begin{align}\label{qvH1h2}
 &\int_{\mathcal{M}}w\partial_{p}q_{v}\cdot\Delta q_{v}d\mathcal{M}\leq\int_{\mathcal{M}'}\int_{p_{0}}^{p_{1}}|\nabla v|dp\int_{p_{0}}^{p_{1}}|\partial_{p}q_{v}||\Delta q_{v}|dpd\mathcal{M}'\nonumber\\
 \leq& C\|\Delta q_{v}\|_{L^{2}}\|\nabla v\|_{L^{2}}^{\frac{1}{2}}\left(
 \|\nabla v\|_{L^{2}}^{\frac{1}{2}}+\|\nabla^{2} v\|_{L^{2}}^{\frac{1}{2}}\right)\|\partial_{p}q_{v}\|_{L^{2}}^{\frac{1}{2}}\left(
 \|\partial_{p}q_{v}\|_{L^{2}}^{\frac{1}{2}}+\|\nabla \partial_{p}q_{v}\|_{L^{2}}^{\frac{1}{2}}\right)\\
 \leq& C\left(\|\nabla v\|_{L^{2}}^{2}+\|\nabla v\|_{L^{2}}^{4}+\|\Delta v\|_{L^{2}}^{2}+
 \|\nabla v\|_{L^{2}}^{2}\|\Delta v\|_{L^{2}}^{2}\right)\|\partial_{p}q_{v} \|_{L^{2}}^{2}+\frac{1}{8}\|\Delta q_{v}\|_{L^{2}}^{2}+\frac{3}{4}\|\nabla \partial_{p}v\|_{L^{2}}^{2}.\nonumber
 \end{align}
 Combining all the above inequalities, we can infer that
 \begin{align}\label{qvH1h3}
&\frac{d}{dt}\left(\frac{1}{2}\|\nabla q_{v}\|_{L^{2}}^{2}+
\alpha_{lv}\int_{\Gamma_{l}}\left(\frac{1}{2}(q_{v})^{2}-q_{v}q_{blv}\right)d\Gamma_{l}\right)
+\frac{5}{8}\|\Delta q_{v}\|_{L^{2}}^{2}+\frac{1}{4}\|\nabla\partial_{p}q_{v}\|_{w}^{2}\nonumber\\
\leq &A_{4}(t)(\|\nabla q_{v}\|_{L^{2}}^{2}+\|\partial_{p}q_{v} \|_{L^{2}}^{2})+A_{5}(t),
\end{align}
where
\begin{align*}
A_{4}(t)=&C\left(\|\nabla v\|_{L^{2}}^{2}+\|\nabla v\|_{L^{2}}^{4}+\|\Delta v\|_{L^{2}}^{2}+
 \|\nabla v\|_{L^{2}}^{2}\|\Delta v\|_{L^{2}}^{2}+\|v\|_{L^{2}}^{2}+\|u\|_{L^{2}}^{2}+\|\nabla u\|_{L^{2}}^{2}+\right.\nonumber\\
 &\left.\|v\|_{L^{2}}^{4}+\|\nabla v\|_{L^{2}}^{2}\|v\|_{L^{2}}^{2}+\|u\|_{L^{2}}^{2}\|\nabla v\|_{L^{2}}^{2}+\|u\|_{L^{2}}^{4}+\|v\|_{L^{2}}^{2}\|\nabla u\|_{L^{2}}^{2}+\|u\|_{L^{2}}^{2}\|\nabla u\|_{L^{2}}^{2}+1\right)\nonumber
\end{align*}
and
\begin{align*}
A_{5}(t)=C(1+\|q_{v}\|_{L_{2}}^{2}+\|\nabla v\|_{L^{2}}^{2}).
\end{align*}

By taking the inner product of equation $(\ref{ae3})$ with $-\partial_{p}^{2}q_{v}$ in the $L^{2}$ space, we have
\begin{align}\label{qvH10}
&-\int_{\mathcal{M}}\partial_{t}q_{v}\partial_{p}^{2}q_{v}d\mathcal{M}
-\int_{\mathcal{M}}\mathcal{A}_{q_{v}}q_{v}\partial_{p}^{2}q_{v}d\mathcal{M}
\nonumber\\
= &\int_{\mathcal{M}}\left(v\cdot\nabla q_{v}+w\partial_{p}q_{v}\right)\partial_{p}^{2}q_{v}d\mathcal{M}
-\int_{\mathcal{M}}\left(f_{q_{v}}-w^{-}Fh_{q_{v}}\right)\partial_{p}^{2}q_{v}d\mathcal{M}.
\end{align}
Through similar arguments as in \cite{TanLiu}, we can get that
\begin{align}
-\int_{\mathcal{M}}\partial_{t}q_{v}\partial_{p}^{2}q_{v}d\mathcal{M}
\geq&\frac{d}{dt}\left(\frac{1}{2}\|\partial_{p}q_{v}\|_{L^{2}}^{2}
+\beta_{v}\int_{\Gamma_{i}}\left(\frac{1}{2}(q_{v})^{2}-q_{v_{\ast}}q_{v}\right)d\Gamma_{i}\right)\nonumber\\
&-C\left(\|\partial_{p}q_{v}\|_{L^{2}}^{2}+\|q_{v}\|_{L^{2}}^{2}+1\right),
\end{align}
\begin{align}\label{qvH103}
-\int_{\mathcal{M}}\mathcal{A}_{q_{v}}q_{v}\partial_{p}^{2}q_{v}d\mathcal{M}
\geq \frac{3}{4}\left(\|\nabla\partial_{p}q_{v}\|_{L^{2}}^{2}
+\left\|\partial_{p}^{2}q_{v}\right\|_{w}^{2} \right)-C\left(\|\partial_{p}q_{v}\|_{L^{2}}^{2}+1\right),
\end{align}
and
\begin{align}\label{qvH1051}
\int_{\mathcal{M}}\left(f_{q_{v}}-{w}^{-}Fh_{q_{v}}\right)\partial_{p}^{2}q_{v}d\mathcal{M}
\leq C(1+\|q_{v}\|_{L^{2}}^{2}+\|\nabla v\|_{L^{2}}^{2})+\frac{1}{12}\|\partial_{p}^{2}q_{v}\|_{L^{2}}^{2}.
\end{align}
Then we consider the most problematic term
\begin{align*}
\int_{\mathcal{M}}\left(v\cdot\nabla q_{v}+w\partial_{p}q_{v}\right)\partial_{p}^{2}q_{v}d\mathcal{M}.
\end{align*}
In fact, through similar arguments as in (\ref{qvH1h1}) and (\ref{qvH1h2}), we have
\begin{align*}
 &\int_{\mathcal{M}}(v\cdot\nabla)q_{v}\cdot\partial_{p}^{2} q_{v}d\mathcal{M}\leq\int_{\mathcal{M}'} \int_{p_{0}}^{p_{1}} |v|+|\partial_{p}v|dp\int_{p_{0}}^{p_{1}} |\nabla q_{v}||\partial_{p}^{2} q_{v}|dp\mathcal{M}'\nonumber\\
\leq& C \left(\|v\|_{L^2}^{\frac{1}{2}}+\|u\|_{L^2}^{\frac{1}{2}}\right)
\left(\|v\|_{L^{2}}^{\frac{1}{2}} + \|\nabla v\|_{L^{2}}^{\frac{1}{2}}+\|u\|_{L^2}^{\frac{1}{2}}+\|\nabla u\|_{L^2}^{\frac{1}{2}}\right)\|\partial_{p}^{2} q_{v}\|_{L^{2}}\times\nonumber\\
&\ \ \  \|\nabla q_{v}\|_{L^{2}}^{\frac{1}{2}}\left(\|\nabla q_{v}\|_{L^{2}}^{\frac{1}{2}}+ \|\nabla^{2} q_{v}\|_{L^{2}}^{\frac{1}{2}}\right)\nonumber\\
\leq&C\left(\|v\|_{L^{2}}^{2}+\|u\|_{L^{2}}^{2}+\|\nabla v\|_{L^{2}}^{2}+\|\nabla u\|_{L^{2}}^{2}
+\|v\|_{L^{2}}^{4}+\|\nabla v\|_{L^{2}}^{2}\|v\|_{L^{2}}^{2}+\|u\|_{L^{2}}^{2}\|\nabla v\|_{L^{2}}^{2}\right.\nonumber\\
&\left.+\|u\|_{L^{2}}^{4}+\|v\|_{L^{2}}^{2}\|\nabla u\|_{L^{2}}^{2}+
\|u\|_{L^{2}}^{2}\|\nabla u\|_{L^{2}}^{2}\right)\|\nabla q_{v}\|_{L^{2}}^{2}+\frac{1}{8}\|\Delta q_{v}\|_{L^{2}}^{2}
+\frac{1}{12}\|\partial_{p}^{2}q_{v}\|_{L^{2}}^{2}.
 \end{align*}
 Similarly,
\begin{align*}
 &\int_{\mathcal{M}}w\partial_{p}q_{v}\partial_{p}^{2} q_{v}d\mathcal{M}\leq\int_{\mathcal{M}'}\int_{p_{0}}^{p_{1}}|\nabla v|dp\int_{p_{0}}^{p_{1}}|\partial_{p}q_{v}||\partial_{p}^{2} q_{v}|dpd\mathcal{M}'\nonumber\\
 \leq& C\|\partial_{p}^{2} q_{v}\|_{L^{2}}\|\nabla v\|_{L^{2}}^{\frac{1}{2}}\left(
 \|\nabla v\|_{L^{2}}^{\frac{1}{2}}+\|\nabla^{2} v\|_{L^{2}}^{\frac{1}{2}}\right)\|\partial_{p}q_{v}\|_{L^{2}}^{\frac{1}{2}}\left(
 \|\partial_{p}q_{v}\|_{L^{2}}^{\frac{1}{2}}+\|\nabla \partial_{p}q_{v}\|_{L^{2}}^{\frac{1}{2}}\right)\nonumber\\
 \leq& C\left(\|\nabla v\|_{L^{2}}^{2}+\|\nabla v\|_{L^{2}}^{4}+\|\Delta v\|_{L^{2}}^{2}+
 \|\nabla v\|_{L^{2}}^{2}\|\Delta v\|_{L^{2}}^{2}\right)\|\partial_{p}q_{v} \|_{L^{2}}^{2}+\frac{1}{12}\|\partial_{p}^{2} v\|_{L^{2}}^{2}+\frac{3}{4}\|\nabla \partial_{p}q_{v}\|_{L^{2}}^{2}.
 \end{align*}
 Thus,
 \begin{align}\label{qvH1p22}
 &\frac{d}{dt}\left(\frac{1}{2}\|\partial_{p}q_{v}\|_{L^{2}}^{2}
+\int_{\Gamma_{i}}\left(\frac{1}{2}(q_{v})^{2}-q_{v_{\ast}}q_{v}\right)d\Gamma_{i}\right)
+\frac{1}{4}\|\nabla\partial_{p}q_{v}\|_{L^{2}}^{2}
+\frac{1}{2}\left\|\partial_{p}^{2}q_{v}\right\|_{w}^{2}\nonumber\\
 \leq&A_{1}(t)(\|\nabla q_{v}\|_{L^{2}}^{2}+\|\partial_{p}q_{v} \|_{L^{2}}^{2})+A_{2}(t)+\frac{1}{8}\|\Delta q_{v}\|_{L^{2}}^{2}.
 \end{align}
 Combining (\ref{qvH1h3}) and (\ref{qvH1p22}), we can infer that
 \begin{align}
 &\frac{d}{dt}\left(\frac{1}{2}\|q_{v}\|_{H^{1}}^{2}
+\int_{\Gamma_{i}}\left(\frac{1}{2}(q_{v})^{2}-q_{v_{\ast}}q_{v}\right)d\Gamma_{i}
+
\int_{\Gamma_{l}}\left(\frac{1}{2}(q_{v})^{2}-q_{v}q_{blv}\right)d\Gamma_{l}\right)
+\frac{1}{2}\|q_{v}\|_{H^{2}}^{2}\nonumber\\
 \leq&A_{4}(t)\|q_{v}\|_{H^{1}}^{2}+A_{5}(t).
 \end{align}
 Utilizing the H\"older inequality and the Young inequality, we can obtain that
 \begin{align*}
G=&\int_{\Gamma_{i}}\left(\frac{1}{2}(q_{v})^{2}-q_{v_{\ast}}q_{v}\right)d\Gamma_{i}
+\int_{\Gamma_{l}}\left(\frac{1}{2}(q_{v})^{2}-q_{v}q_{blv}\right)d\Gamma_{l}\nonumber\\
\geq
 &-C(\|q_{v_{\ast}}\|_{L^{2}(\Gamma_{i})}^{2}+\|q_{blv}\|_{L^{2}(\Gamma_{l})}^{2}):=-C_{\ast}.
 \end{align*}
 Thus
 \begin{align*}
 \frac{d}{dt}\left(\frac{1}{2}\|q_{v}\|_{H^{1}}^{2}+G+C_{\ast}\right)
+\frac{1}{2}\|q_{v}\|_{H^{2}}^{2} \leq A_{4}(t)(G+C_{\ast}+\|q_{v}\|_{H^{1}}^{2})+A_{5}(t).
 \end{align*}
 Considering regularities of $v,u,q_{v}$ in (\ref{L2-v-theta-qj}) and (\ref{uL2}) respectively, we know the $L^{2}$ integrability of $A_{4}$ and $A_{5}$. Then using the Gronwall inequality, we can complete the proof of this lemma.
 \end{proof}

 Thus, we get the following result about the local existence of strong solutions:
 \begin{proposition}
Let $v_{0}\in\mathbb{V}$, $\theta_{0}, q_{v0}, q_{r0},q_{c0}\in H^{1}$. Then there exists $t_{\ast}\in (0,t_{1}]$ and a solution $(v,\theta,q_{v},q_{c},q_{r})$ to equations (\ref{e1})-(\ref{e7}) with boundary condition (\ref{boundary condition}) and initial condition (\ref{initial condition}) satisfying that
\begin{align}
\sup_{0\leq t\leq t_{\ast}}\|(v,\theta,q_{v},q_{c},q_{r})\|_{H^{1}}^{2}
+\int_{0}^{t_{\ast}}\|\Delta v\|_{L^{2}}^{2}+\|\nabla\partial_{p}v\|_{L^{2}}^{2}+\|(\theta,q_{v},q_{c},q_{r})\|_{H^{2}}^{2}dt\leq \mathcal{C}_{1},
\end{align}
where $\mathcal{C}_{1}$ depends on the initial data and $t_{\ast}$.
\end{proposition}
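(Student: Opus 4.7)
The strategy is to work at the level of the approximated system (\ref{ae1})--(\ref{ae5}), derive $H^{1}$ a priori bounds on $(v^{\epsilon},\theta^{\epsilon},q_{v}^{\epsilon},q_{c}^{\epsilon},q_{r}^{\epsilon})$ that are uniform in $\epsilon$, and then pass to the limit $\epsilon\to 0$ along the same lines as in the local quasi-strong existence argument, but in the stronger topology permitted by the new bounds. The existence of approximated strong solutions for fixed $\epsilon$ is furnished by part $(ii)$ of the well-posedness proposition at the start of Section 3, so the entire task reduces to obtaining $\epsilon$-independent estimates and verifying compactness.

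The $H^{1}$ bound for $v^{\epsilon}$, together with the control of $\partial_{p}v^{\epsilon}$ in $L^{\infty}_{t}L^{2}\cap L^{2}_{t}H^{1}$, is already furnished by Lemma \ref{vpL2} and the subsequent lemma; these provide the bounds on $\Delta v^{\epsilon}$ and $\nabla\partial_{p}v^{\epsilon}$ in $L^{2}(0,t_{\ast};L^{2})$ that drive every humidity and temperature estimate. The $H^{1}$ estimate for $q_{v}^{\epsilon}$ is Lemma \ref{H1}. For each of $\theta^{\epsilon}$, $q_{c}^{\epsilon}$, $q_{r}^{\epsilon}$ I would simply repeat the same two-step procedure: take the $L^{2}$ inner product of the equation with $-\Delta$ and with $-\partial_{p}^{2}$ of the unknown, invoke the pointwise bound $f(p)\leq C\int_{p_{0}}^{p_{1}}|f|\,dp+C\int_{p_{0}}^{p_{1}}|\partial_{p}f|\,dp$ together with Lemma \ref{trilinear term lemma} to dominate the transport term $v\cdot\nabla q+w\partial_{p}q$ against the second order terms (cf. (\ref{qvH1h1})--(\ref{qvH1h2})), absorb the $H^{2}$ and $\nabla\partial_{p}v$ pieces on the left, and close by Gronwall using the $L^{2}$-in-time control of the coefficient $A_{4}(t)$ coming from (\ref{L2-v-theta-qj}) and (\ref{uL2}). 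The source terms $f_{\theta},f_{q_{c}},f_{q_{r}}$ and the Heaviside contribution $w^{-}F\mathcal{H}_{\epsilon_{2}}$ are all controlled in $L^{2}$ by $\|\nabla v^{\epsilon}\|_{L^{2}}$ plus lower order humidity norms via uniform boundedness of $F$, $\mathcal{H}_{\epsilon_{2}}$ and $\tau$, exactly as in (\ref{qvH1050})--(\ref{F1}).

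Once the uniform $H^{1}$ estimate is in hand, testing each humidity and temperature equation against an arbitrary $L^{2}$ function and estimating the transport terms with the same Minkowski trick yields $\partial_{t}(\theta^{\epsilon},q_{v}^{\epsilon},q_{c}^{\epsilon},q_{r}^{\epsilon})\in L^{2}(0,t_{\ast};L^{2})$; the bound $\partial_{t}v^{\epsilon}\in L^{2}(0,t_{\ast};\mathbb{H})$ is already given by Lemma \ref{trv}. Aubin--Lions then yields strong convergence of $(\theta^{\epsilon},q_{v}^{\epsilon},q_{c}^{\epsilon},q_{r}^{\epsilon})$ in $L^{2}(0,t_{\ast};H^{1})$ and of $v^{\epsilon}$ in $L^{2}(0,t_{\ast};\mathbb{V})$, sufficient to pass to the limit in the quadratic transport terms and in $f_{\theta},f_{q_{v}},f_{q_{c}},f_{q_{r}}$. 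The Heaviside term is handled through the variational inequality (\ref{variational inequality}) exactly as in \cite{Zelati}.

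The main obstacle is precisely the one flagged in the introduction: $\|\partial_{p}^{2}v^{\epsilon}\|_{L^{2}(0,t;L^{2})}$ is \emph{not} uniformly bounded in $\epsilon_{1}$, so every application of the trilinear inequality in the $-\partial_{p}^{2}q$ test must route the vertical derivative of $v$ through $\|\nabla\partial_{p}v^{\epsilon}\|_{L^{2}}$ rather than $\|\partial_{p}^{2}v^{\epsilon}\|_{L^{2}}$. This is the point of the splitting $f(p)\leq C\int|f|+C\int|\partial_{p}f|$ combined with Lemma \ref{trilinear term lemma}, and the delicate bookkeeping is to make sure that, for each of $\theta,q_{c},q_{r}$, the resulting fraction of $\|\nabla\partial_{p}v^{\epsilon}\|_{L^{2}}^{2}$ on the right never exceeds what the $v^{\epsilon}$ estimate leaves available on the left when all four estimates are summed.
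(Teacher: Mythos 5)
Your proposal is correct and follows essentially the same route as the paper: uniform-in-$\epsilon$ $H^{1}$ bounds obtained by testing each equation with $-\Delta$ and $-\partial_{p}^{2}$ of the unknown, the pointwise bound $f(p)\leq C\int_{p_{0}}^{p_{1}}|f|\,dp+C\int_{p_{0}}^{p_{1}}|\partial_{p}f|\,dp$ combined with Lemma \ref{trilinear term lemma} to route the vertical derivative of $v$ through $\|\nabla\partial_{p}v\|_{L^{2}}$ rather than the unavailable $\|\partial_{p}^{2}v\|_{L^{2}}$, Gronwall with the $L^{1}$-in-time coefficients from (\ref{L2-v-theta-qj}) and (\ref{uL2}), and Aubin--Lions to pass to the limit. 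The paper likewise carries out the detailed computation only for $q_{v}$ (Lemma \ref{H1}) and treats $\theta$, $q_{c}$, $q_{r}$ by the identical two-step procedure, so no substantive difference exists.
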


\subsection{The time regularity of strong solutions}
In this subsection, we consider the time regularity of strong solutions. In fact, we have obtained the time regularity of velocity field $v$ in space $L^{2}(0,t_{\ast};\mathbb{H})$. Thus we mainly talk about the time regularities of $\theta,q_{v},q_{c},q_{r}$. Here we give a detailed proof of the time regularity of $q_{c}$. The proof of time regularity of other variables can be obtained similarly.
\begin{lemma}\label{H1t}
Let $v_{0}\in\mathbb{V}$, $\theta_{0}, q_{v0}, q_{r0},q_{c0}\in H^{1}$, and $q_{c}$ be the solution to equation (\ref{e6}) with boundary condition (\ref{boundary condition}) and initial condition (\ref{initial condition}). Then
\begin{align*}
\partial_{t}q_{c}\in L^{2}(0,t_{\ast};L^{2}).
\end{align*}
\end{lemma}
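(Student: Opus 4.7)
The plan is to read off $\partial_t q_c$ from equation (\ref{e6}), writing
\[
\partial_t q_c = -\mathcal{A}_{q_c} q_c - v\cdot\nabla q_c - w\,\partial_p q_c + f_{q_c}(q_v,q_c,q_r,\theta) + w^- F\, h_{q_v},
\]
and show each term on the right-hand side lies in $L^2(0,t_\ast;L^2)$. Once this is done, the conclusion $\partial_t q_c\in L^2(0,t_\ast;L^2)$ follows immediately from the triangle inequality. The previous proposition gives the strong-solution regularity already established, namely $v\in L^\infty(0,t_\ast;\mathbb{V})$ with $\Delta v,\nabla\partial_p v\in L^2(0,t_\ast;L^2)$, and $q_c\in L^\infty(0,t_\ast;H^1)\cap L^2(0,t_\ast;H^2)$; I will use these freely.

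The linear and source-like pieces are straightforward. The diffusion term $\mathcal{A}_{q_c} q_c$ involves only $\Delta q_c$ and second-order $p$-derivatives of $q_c$, so it is controlled by $\|q_c\|_{H^2}\in L^2(0,t_\ast)$. Using definition (\ref{f-qc}) and the uniform bound on $\tau$, $f_{q_c}$ is bounded pointwise by $C(1+|q_c|)$, hence lies in $L^\infty(0,t_\ast;L^2)$ thanks to $q_c\in L^\infty(0,t_\ast;H^1)$. For the Heaviside contribution, $F$ and $h_{q_v}$ are uniformly bounded (as recalled around (\ref{F1})), and $\|w\|_{L^2}\leq C\|\nabla v\|_{L^2}$ via $\partial_p w = -\nabla\cdot v$, so $\|w^- F h_{q_v}\|_{L^2}\leq C\|v\|_{H^1}$, giving an $L^\infty(0,t_\ast;L^2)$ bound.

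The main obstacle is the trilinear part $v\cdot\nabla q_c + w\,\partial_p q_c$, because $v$ does not have vertical viscosity so $\partial_p^2 v$ is unavailable; I cannot simply use $\|v\|_{L^\infty}\|\nabla q_c\|_{L^2}$. The resolution is to apply the same anisotropic Sobolev estimate already exploited in the proof of Lemma 3.5 (see inequalities (\ref{qvH1h1})–(\ref{qvH1h2})) and Lemma \ref{trilinear term lemma}, but now pairing against $\partial_t q_c$ rather than $\Delta q_c$. Concretely, writing $u=\partial_p v$ and bounding the horizontal integral first in the vertical variable, one obtains
\[
\|v\cdot\nabla q_c\|_{L^2}^2 \leq C\bigl(\|v\|_{H^1}^2+\|u\|_{H^1}^2\bigr)\|\nabla q_c\|_{L^2}\|\nabla q_c\|_{H^1},
\]
\[
\|w\,\partial_p q_c\|_{L^2}^2 \leq C\bigl(\|\nabla v\|_{L^2}^2+\|\Delta v\|_{L^2}^2\bigr)\|\partial_p q_c\|_{L^2}\|\nabla\partial_p q_c\|_{L^2}.
\]
The prefactors $\|v\|_{H^1}^2+\|u\|_{H^1}^2$ and $\|\nabla v\|_{L^2}^2+\|\Delta v\|_{L^2}^2$ are $L^1(0,t_\ast)$ by the $H^1$-estimates in Lemmas \ref{vpL2}–3.5; $\|\nabla q_c\|_{L^2}$ and $\|\partial_p q_c\|_{L^2}$ are uniformly bounded in time; and $\|\nabla q_c\|_{H^1}$, $\|\nabla\partial_p q_c\|_{L^2}$ are in $L^2(0,t_\ast)$ by the $H^2$ a priori bound on $q_c$. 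Hölder's inequality then yields $L^1(0,t_\ast)$ bounds on $\|v\cdot\nabla q_c\|_{L^2}^2$ and $\|w\,\partial_p q_c\|_{L^2}^2$, completing the proof.
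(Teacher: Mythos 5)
Your overall strategy coincides with the paper's: the paper bounds $\langle\partial_t q_c,\tilde q_c\rangle$ against test functions $\tilde q_c$ of unit $L^2$ norm, which is exactly the term-by-term $L^2$ bound you propose, and it uses the same ingredients (Lemma \ref{trilinear term lemma} for the trilinear terms, $q_c\in L^2(0,t_\ast;H^2)$ for the diffusion term, and the uniform boundedness of $F$, $h_{q_v}$, $\tau$ together with $\|w\|_{L^2}\leq C\|\nabla v\|_{L^2}$ for the sources). The linear and source estimates in your proposal are fine.

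There is, however, a genuine flaw in your final time-integration step for the trilinear terms. By symmetrizing with Young's inequality you produce prefactors $\|v\|_{H^1}^2+\|u\|_{H^1}^2$ and $\|\nabla v\|_{L^2}^2+\|\Delta v\|_{L^2}^2$, which are only $L^1(0,t_\ast)$ (they contain $\|\nabla\partial_p v\|_{L^2}^2$ and $\|\Delta v\|_{L^2}^2$, for which only $L^2_tL^2_x$ control is available), and you then multiply them by $\|\nabla q_c\|_{H^1}$ or $\|\nabla\partial_p q_c\|_{L^2}$, which are only $L^2(0,t_\ast)$. H\"older does not give $L^1\cdot L^2\subset L^1$ on a finite interval (take $t^{-1+\varepsilon}\in L^1$ and $t^{-1/2+\varepsilon}\in L^2$ with $\varepsilon$ small), so as written the asserted $L^1(0,t_\ast)$ bounds do not follow. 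The repair is to keep the output of Lemma \ref{trilinear term lemma} in its asymmetric form before squaring, e.g.
\begin{align*}
\|w\,\partial_p q_c\|_{L^2}^2\leq C\,\|\nabla v\|_{L^2}\,\|\partial_p q_c\|_{L^2}\left(\|\nabla v\|_{L^2}+\|\Delta v\|_{L^2}\right)\left(\|\partial_p q_c\|_{L^2}+\|\nabla\partial_p q_c\|_{L^2}\right),
\end{align*}
so that the integrand is a product of $L^\infty_t$ factors with exactly one product of two $L^2_t$ factors, which is $L^1(0,t_\ast)$ by Cauchy--Schwarz; the term $\|v\cdot\nabla q_c\|_{L^2}^2$ is handled identically with $(\|v\|_{L^2}+\|u\|_{L^2})\|\nabla q_c\|_{L^2}$ as the bounded prefactor. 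This is precisely how the paper organizes its majorant $A_6(t)$ in \eqref{qct3}--\eqref{qct4}: after Young's inequality each summand of $A_6$ contains at most one factor that is merely $L^2$ in time, whence $A_6\in L^2(0,t_\ast)$ and $\int_0^{t_\ast}\|\partial_t q_c\|_{L^2}^2\,dt\leq\int_0^{t_\ast}A_6^2\,dt<\infty$.
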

\begin{proof}
 Choosing a test function $\tilde{q}_{c}\in L^{2}(0,t_{\ast};L^{2})$ with $\|\tilde{q}_{c}\|_{L^{2}(0,t_{\ast};L^{2})}\leq 1$, we can infer from equation (\ref{e6}) that
 \begin{align}\label{qct1}
|\langle\partial_{t}q_{c},\tilde{q}_{c}\rangle|\leq
  |\langle\mathcal{A}_{q_{c}}q_{c},\tilde{q}_{c}\rangle|+ |\langle v\cdot\nabla q_{c}+w\partial_{p}q_{c},\tilde{q}_{c}\rangle|
 +|\langle f_{q_{c}}+w^{-}Fh_{q_{v}},\tilde{q}_{c}\rangle|.
 \end{align}
 Using the H\"older inequality, we obtain that
  \begin{align}\label{qct2}
 |\langle\mathcal{A}_{q_{c}}q_{c},\tilde{q}_{c}\rangle|\leq C\|q_{c}\|_{H^{2}}\|\tilde{q}_{c}\|_{L^{2}}.
 \end{align}
By the inequality in Lemma \ref{trilinear term lemma}, we can infer that
 \begin{align}\label{qct3}
 &|\langle(v\cdot\nabla)q_{c},\tilde q_{c}\rangle|\leq\int_{\mathcal{M}'} \int_{p_{0}}^{p_{1}} |v|+|\partial_{p}v|dp\int_{p_{0}}^{p_{1}} |\nabla q_{c}||\tilde q_{c}|dp\mathcal{M}'\nonumber\\
\leq &C \left(\|v\|_{L^2}^{\frac{1}{2}}+\|u\|_{L^2}^{\frac{1}{2}}\right)
\left(\|v\|_{L^{2}}^{\frac{1}{2}} + \|\nabla v\|_{L^{2}}^{\frac{1}{2}}+\|u\|_{L^2}^{\frac{1}{2}}+\|\nabla u\|_{L^2}^{\frac{1}{2}}\right)\|\tilde q_{c}\|_{L^{2}}\|\nabla q_{c}\|_{L^{2}}^{\frac{1}{2}}\cdot\nonumber\\
&\ \ \left(\|\nabla q_{c}\|_{L^{2}}^{\frac{1}{2}}+ \|\nabla^{2} q_{c}\|_{L^{2}}^{\frac{1}{2}}\right)\nonumber\\
\leq &C\left(\|v\|_{L^{2}}\|\nabla q_{c}\|_{L^{2}}+\|\nabla v\|_{L^{2}}\|\nabla q_{c}\|_{L^{2}}+\|u\|_{L^{2}}\|\nabla q_{c}\|_{L^{2}}+\|\nabla u\|_{L^{2}}\|\nabla q_{c}\|_{L^{2}}+\|v\|_{L^{2}}\|\Delta q_{c}\|_{L^{2}}\right.\nonumber\\
&\left.+\|\nabla v\|_{L^{2}}\|\Delta q_{c}\|_{L^{2}}+\|u\|_{L^{2}}\|\Delta q_{c}\|_{L^{2}}\right)\|\tilde q_{c}\|_{L^{2}}.
 \end{align}
 Similarly,
\begin{align}\label{qct4}
 &|\langle w\partial_{p}q_{c},\tilde q_{c}\rangle|\leq\int_{\mathcal{M}'}\int_{p_{0}}^{p_{1}}|\nabla v|dp\int_{p_{0}}^{p_{1}}|\partial_{p}q_{c}||\tilde q_{c}|dpd\mathcal{M}'\nonumber\\
\leq &C\|\tilde q_{c}\|_{L^{2}}\|\nabla v\|_{L^{2}}^{\frac{1}{2}}\left(
 \|\nabla v\|_{L^{2}}^{\frac{1}{2}}+\|\nabla^{2} v\|_{L^{2}}^{\frac{1}{2}}\right)\|\partial_{p}q_{c}\|_{L^{2}}^{\frac{1}{2}}\left(
 \|\partial_{p}q_{c}\|_{L^{2}}^{\frac{1}{2}}+\|\nabla \partial_{p}q_{c}\|_{L^{2}}^{\frac{1}{2}}\right)\\
\leq &C\left(\|\nabla v\|_{L^{2}}\|\partial_{p}q_{c}\|_{L^{2}}+\|\nabla v\|_{L^{2}}\|\nabla\partial_{p}q_{c}\|_{L^{2}}+\|\Delta v\|_{L^{2}}\|\partial_{p}q_{c}\|_{L^{2}}+
 \|\nabla v\|_{L^{2}}\|\Delta v\|_{L^{2}}\right.\nonumber\\
 &\left.+\|\partial_{p}q_{c}\|_{L^{2}}\|\nabla\partial_{p}q_{c}\|_{L^{2}}
 \right)\|\tilde q_{c}\|_{L^{2}}^{2}.
 \end{align}
 Recalling definitions of $F$ and $f_{q_{c}}$, we have
 \begin{align}\label{qct5}
 \left|\langle f_{q_{c}}
+w^{-}Fh_{q_{v}},\tilde{q}_{c}\rangle\right|
\leq &\left\|-k_{1}(q_{c}-q_{crit})^{+}-k_{2}q_{c}\tau(q_{r})+w^{-}Fh_{q_{v}}\right\|_{L^{2}}\|\tilde{q}_{c}\|_{L^{2}}
\nonumber\\
\leq &
C (1+\|q_{c}\|_{L^{2}}+\|\nabla v\|_{L^{2}})\|\tilde{q}_{c}\|_{L^{2}},
 \end{align}
 where we have used the uniform boundness of $\tau(q_{r})$ in the last step.

  Combining (\ref{qct1})-(\ref{qct5}), we can obtain that
 \begin{align*}
\|\partial_{t}q_{c}\|_{L_{2}}
\leq A_{6}(t)
\end{align*}
where
\begin{align*}
 A_{6}(t)&=C\left(1+\|q_{c}\|_{H^{2}}+\|q_{c}\|_{L^{2}}+\|\nabla v\|_{L^{2}}+\|v\|_{L^{2}}\|\nabla q_{c}\|_{L^{2}}+\|\nabla v\|_{L^{2}}\|\nabla q_{c}\|_{L^{2}}+\|u\|_{L^{2}}\|\nabla q_{c}\|_{L^{2}}\right.\nonumber\\
  &\left.+\|\nabla u\|_{L^{2}}\|\nabla q_{c}\|_{L^{2}}+\|v\|_{L^{2}}\|\Delta q_{c}\|_{L^{2}}
+\|\nabla v\|_{L^{2}}\|\Delta q_{c}\|_{L^{2}}+\|u\|_{L^{2}}\|\Delta q_{c}\|_{L^{2}}+\|\nabla v\|_{L^{2}}\|\partial_{p}q_{c}\|_{L^{2}}\right.\nonumber\\
&\left.+\|\nabla v\|_{L^{2}}\|\nabla\partial_{p}q_{c}\|_{L^{2}}+
\|\Delta v\|_{L^{2}}\|\partial_{p}q_{c}\|_{L^{2}}+
 \|\nabla v\|_{L^{2}}\|\Delta v\|_{L^{2}}
+\|\partial_{p}q_{c}\|_{L^{2}}\|\nabla\partial_{p}q_{c}\|_{L^{2}}\right).
 \end{align*}
 Considering regularities for $v$ and $q_{c}$, it is obviously that $A_{6}(t)$ is $L^{2}$ integrable on $(0,t_{\ast})$. Then integrating in time from $0$ to $t_{\ast}$, we can infer that
 \begin{align*}
 \int_{0}^{t_{\ast}}\|\partial_{t}q_{c}\|_{L_{2}}^{2}dt
 \leq &\int_{0}^{t_{\ast}}A_{6}^{2}(t)dt<\infty,
 \end{align*}
 which shows that $q_{c}\in L^{2}(0,t_{\ast};L^{2})$.
 \end{proof}
\subsection{The global existence of strong solutions}
Checking the proof in previous sections, we find that the arising of $t_{\ast}$ is due to the $L^{2}$ a priori estimate for $\partial_{p}v$. If we want to obtain the global existence of strong solutions, the $L^{\infty}(0,t_{1};L^{2}(\mathcal{M}))$ estimate for $\partial_{p}v$ should rebuilt.  In fact, through a similar argument as in \cite{CaoTiti6} and \cite{Hussein}, we know that, if $\|v_{0}\|_{H^{1}}+\left\|\partial_{p}v_{0}\right\|_{L^{m}}+
\|v_{0}\|_{L^{\infty}}<\infty$ for some $m>2$, then
for any time $0\leq t\leq t_{1}$,
\begin{align}\label{Global-v}
\sup\limits_{s\in [0,t]}\left(\|v\|_{H^{1}}^{2}
+\left\|\partial_{p}v\right\|_{L^{m}}^{m}\right)+
\int_{0}^{t}\|\Delta v\|_{L^{2}}^{2}+\big\|\nabla\partial_{p}v\big\|_{w}^{2}ds
\leq C,
\end{align}
where $C$ is a constant depending on $m,t$ and $\|v_{0}\|_{H^{1}}+\|\partial_{p}v_{0}\|_{L^{m}}+
\|v_{0}\|_{L^{\infty}}$.

Combining the $L^{2}$ estimates for $\theta,q_{v},q_{c},q_{r}$ in Lemma \ref{l2prior} and the $H^{1}$ estimate for $v$ in (\ref{Global-v}), and testifying their time regularities as in Lemma \ref{trv}, we can get the global existence of quasi-strong solutions.
\begin{proposition}\label{Globalquasi-strong}
Let $(\theta_{0},q_{v},q_{c},q_{r})\in (L^{2})^{4}$, $v_{0}\in L^{\infty}\cap\mathbb{V}$, $\partial_{p}v_{0}\in L^{m}$ for some $m>2$,
 $\|v_{0}\|_{H^{1}}+\|\partial_{p}v_{0}\|_{L^{m}}+
\|v_{0}\|_{L^{\infty}}<\infty$. Then there exists a global quasi-strong solution $(v,\theta,q_{v},q_{c},q_{r})$ to equations (\ref{e1})-(\ref{e7}) with boundary condition (\ref{boundary condition}) and initial condition (\ref{initial condition}). Moreover
\begin{align*}
\sup_{0\leq t\leq t_{1}}\left(\|v\|_{\mathbb{V}}^{2}+\|(\theta,q_{j})\|_{L^{2}}^{2}+\big\|\partial_{p}v\big\|_{L^{m}}^{m}\right)
+\int_{0}^{t_{1}}\|\Delta v\|_{L^{2}}^{2}+\|\nabla\partial_{p} v\|_{w}^{2}+\|(\theta,q_{j})\|_{H^{1}}^{2}dt\leq \mathcal{C}_{2},
\end{align*}
for $j\in\{v,c,r\}$, where $\mathcal{C}_{2}$ depends on the initial data, $m$ and $t_{1}$.
\end{proposition}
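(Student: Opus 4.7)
The plan is to combine the $\epsilon$-uniform a priori bounds already established in the previous subsections with the global-in-time $H^{1}$ bound (\ref{Global-v}) for the horizontal velocity, then pass to the limit $\epsilon\to 0$ in the approximated system (\ref{ae1})--(\ref{ae5}) exactly as in the local existence proof (Proposition \ref{quasistrong existence local}). The reason the local quasi-strong solution only lived on $[0,t_{\ast}]$ was that Lemma \ref{vpL2} relied on the generalized Bihari--LaSalle inequality, whose comparison function $\mathcal{G}^{-1}$ becomes infinite in finite time because $g(u)=1+u^{2}+u^{4}$ grows super-linearly. Under the stronger hypotheses $v_{0}\in L^{\infty}\cap\mathbb{V}$ and $\partial_{p}v_{0}\in L^{m}$ for some $m>2$, the estimate (\ref{Global-v}) bypasses this obstruction entirely, furnishing an $H^{1}$-bound for $v$ on the arbitrary fixed interval $[0,t_{1}]$.

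First I would verify that the approximated problem inherits (\ref{Global-v}) uniformly in $\epsilon$. The only difference between (\ref{e1}) and (\ref{ae1}) is the extra term $-\epsilon_{1}\partial_{p}((gp/R\bar{\theta})^{2}\partial_{p}v^{\epsilon})$, which, when used as test multiplier in the $H^{1}$-type energy identities and in the $L^{m}$-type estimate for $\partial_{p}v^{\epsilon}$ carried out in \cite{CaoTiti6,Hussein}, contributes a nonnegative dissipative term $\epsilon_{1}\|\partial_{p}v^{\epsilon}\|_{w}^{2}$ (or its weighted $L^{m}$ analogue) on the left-hand side and is therefore harmless. Consequently, with the $L^{2}$ bounds on $(\theta^{\epsilon},q_{v}^{\epsilon},q_{c}^{\epsilon},q_{r}^{\epsilon})$ already provided by Lemma \ref{l2prior}, one obtains the $\epsilon$-independent bound
\begin{equation*}
\sup_{0\leq t\leq t_{1}}\!\left(\|v^{\epsilon}\|_{H^{1}}^{2}+\|\partial_{p}v^{\epsilon}\|_{L^{m}}^{m}+\|(\theta^{\epsilon},q_{j}^{\epsilon})\|_{L^{2}}^{2}\right)+\int_{0}^{t_{1}}\!\|\Delta v^{\epsilon}\|_{L^{2}}^{2}+\|\nabla\partial_{p}v^{\epsilon}\|_{w}^{2}+\|(\theta^{\epsilon},q_{j}^{\epsilon})\|_{H^{1}}^{2}\,ds\leq \mathcal{C}_{2},
\end{equation*}
with $\mathcal{C}_{2}$ depending only on the data, $m$, and $t_{1}$.

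Next I would recover the time regularities. The proof of Lemma \ref{trv} for $\partial_{t}v^{\epsilon}\in L^{2}(0,t_{\ast};\mathbb{H})$ used only the spatial bounds on $v^{\epsilon}$ and $\theta^{\epsilon}$ that we have just promoted to $[0,t_{1}]$, so the same computation gives $\partial_{t}v^{\epsilon}\in L^{2}(0,t_{1};\mathbb{H})$ uniformly in $\epsilon$. The analogous weak time-derivative bounds $\partial_{t}(\theta^{\epsilon},q_{v}^{\epsilon},q_{c}^{\epsilon},q_{r}^{\epsilon})\in L^{2}(0,t_{1};H^{-1})$ are obtained by testing (\ref{ae2})--(\ref{ae5}) against $H^{1}$ functions and using Lemma \ref{trilinear term lemma} together with the uniform bounds of $F$ and $\mathcal{H}_{\epsilon_{2}}$, exactly as in \cite{TanLiu,Zelati}. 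With these estimates in hand, Aubin--Lions compactness yields a subsequence whose limit $(v,\theta,q_{v},q_{c},q_{r})$ has the regularity demanded by Definition 1, and the convergence of the nonlinear and multi-valued Heaviside terms is handled verbatim as in Proposition \ref{quasistrong existence local}, producing the selection $h_{q}\in L^{\infty}$ satisfying the variational inequality.

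The main obstacle, and the only place that genuinely requires attention, is confirming that the global $H^{1}$-estimate (\ref{Global-v}) transfers to the approximated system with constants independent of $\epsilon_{1}$: the $L^{\infty}$-in-time control of $\partial_{p}v$ in $L^{m}$ from \cite{CaoTiti6} uses a delicate argument based on the $L^{\infty}$-bound of $v$ and an energy estimate for $|\partial_{p}v|^{m-2}\partial_{p}v$, and one must check that the artificial viscous term $-\epsilon_{1}\partial_{p}((gp/R\bar{\theta})^{2}\partial_{p}v^{\epsilon})$ contributes a nonnegative quantity after this multiplication (which it does, up to lower-order commutator terms bounded using the smoothness of $gp/R\bar{\theta}$). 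Once this verification is in place, the rest is a direct assembly of Lemma \ref{l2prior}, the uniform analogue of (\ref{Global-v}), the time-regularity estimates, and the compactness/passage-to-the-limit argument already used for the local result.
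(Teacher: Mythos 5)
Your proposal is correct and follows essentially the same route as the paper: identify the Bihari--LaSalle step in Lemma \ref{vpL2} as the source of the finite time $t_{\ast}$, replace it under the stronger hypotheses on $v_{0}$ by the global bound (\ref{Global-v}) obtained as in \cite{CaoTiti6,Hussein}, combine with the $L^{2}$ bounds of Lemma \ref{l2prior} and the time-regularity argument of Lemma \ref{trv}, and pass to the limit by Aubin--Lions. Your explicit check that the artificial vertical viscosity $-\epsilon_{1}\partial_{p}((gp/R\bar{\theta})^{2}\partial_{p}v^{\epsilon})$ contributes a sign-favorable term (modulo harmless commutators) in the $L^{m}$ estimate for $\partial_{p}v^{\epsilon}$ is a point the paper leaves implicit, but it does not change the substance of the argument.
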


With the uniform estimate \eqref{Global-v} for $v$ at hand,  proceeding exactly as in the proof of Lemma \ref{H1}
to seek uniform $H^{1}$-estimates for $\theta$ and $q_{v},q_{c},q_{r}$, we are able to obtain that, for $j\in\{v,c,r\}$
\begin{align*}
\sup\limits_{t\in [0,t_{1}]}\left(\|(v,\theta,q_{j})\|_{H^{1}}^{2}
+\big\|\partial_{p}v\big\|_{L^{m}}^{m}\right)+
\int_{0}^{t_{1}}\|\Delta v\|_{L^{2}}^{2}+\|\nabla\partial_{p}v\big\|_{w}^{2}+\|(\theta,q_{j})\|_{H^{2}}^{2}ds
\leq C,
\end{align*}
where $C$ is a constant depending on $m,t_{1}$, $\|(\theta_{0},q_{j0})\|_{H^{1}}$ as well as $\|v_{0}\|_{H^{1}}+\|\partial_{p}v_{0}\|_{L^{m}}+
\|v_{0}\|_{L^{\infty}}$.
At the same time, following the proof in Lemma \ref{H1t}, we can also obtain that
\begin{align*}
\left(\partial_{t}v,\partial_{t}\theta,\partial_{t}q_{v},\partial_{t}q_{c},\partial_{t}q_{r}\right)\in L^{2}(0,t_{1};\mathbb{H}\times L^{2}(\mathcal{M})^{4}).
\end{align*}
Then using the Aubin-lions compactness theorem, we deduce that
\begin{align*}
\left(v,\theta,q_{v},q_{c},q_{r}\right)\in C(0,t_{1};\mathbb{V}\times(H^{1})^{4}).
\end{align*}

As summary, we get the global existence of strong solutions.
 \begin{proposition}\label{Global whole}
Let $(v_{0},\theta_{0},q_{v},q_{c},q_{r})\in \mathbb{V}\times (H^{1})^{4}$, $v_{0}\in L^{\infty}$, $\partial_{p}v_{0}\in L^{m}$ for some $m>2$,
 $\|v_{0}\|_{H^{1}}+\|\partial_{p}v_{0}\|_{L^{m}}+
\|v_{0}\|_{L^{\infty}}<\infty$. Then there exists a global strong solution $(v,\theta,q_{v},q_{c},q_{r})$ to equations (\ref{e1})-(\ref{e7}) with boundary condition (\ref{boundary condition}) and initial condition (\ref{initial condition}). Moreover
\begin{align*}
\sup_{0\leq t\leq t_{1}}\left(\|(v,\theta,q_{v},q_{c},q_{r})\|_{H^{1}}^{2}+\big\|\partial_{p}v\big\|_{L^{m}}^{m}\right)
+\int_{0}^{t_{1}}\|\Delta v\|_{L^{2}}^{2}+\|\nabla\partial_{p}v\|_{L^{2}}^{2}+\|(\theta,q_{v},q_{c},q_{r})\|_{H^{2}}^{2}dt\leq \mathcal{C}_{3},
\end{align*}
where $\mathcal{C}_{3}$ depends on the initial data, $m$ and $t_{1}$.
\end{proposition}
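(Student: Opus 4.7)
The plan is to package the ingredients already produced in the preceding paragraphs into a single coherent global strong existence statement, rather than to carry out new estimates from scratch. First I would invoke Proposition \ref{Globalquasi-strong} to obtain a global-in-time quasi-strong solution $(v,\theta,q_v,q_c,q_r)$, together with the crucial uniform a priori bound (\ref{Global-v}), which under the hypotheses $\|v_0\|_{H^1}+\|\partial_p v_0\|_{L^m}+\|v_0\|_{L^\infty}<\infty$ for some $m>2$ upgrades the control of $v$ to $L^\infty(0,t_1;H^1)$ together with $\partial_p v\in L^\infty(0,t_1;L^m)$ and $\Delta v,\nabla\partial_p v\in L^2(0,t_1;L^2)$.

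Next I would upgrade the spatial regularity of the scalar fields by rerunning the $H^1$-estimate of Lemma \ref{H1}, applied in parallel to each of $\theta,q_v,q_c,q_r$, but on the whole interval $(0,t_1)$ rather than on a short interval. The only place the local horizon $t_\ast$ entered that lemma was through the local control of $\partial_p v$ and $\nabla v$; replacing it everywhere by (\ref{Global-v}) makes the Gronwall coefficients $A_4,A_5$ globally $L^1$ in time, and the same computation then yields
\begin{align*}
\sup_{0\leq t\leq t_1}\|(\theta,q_v,q_c,q_r)\|_{H^1}^2+\int_0^{t_1}\|(\theta,q_v,q_c,q_r)\|_{H^2}^2\,dt\leq C.
\end{align*}
The critical technical point, exactly as in Lemma \ref{H1}, is to bound every trilinear term $\int (v\cdot\nabla)(\cdot)\Delta(\cdot)$, $\int w\partial_p(\cdot)\Delta(\cdot)$, and their $\partial_p^2$-analogues via Lemma \ref{trilinear term lemma}, so that only $\|v\|_{L^2},\|\nabla v\|_{L^2},\|\Delta v\|_{L^2},\|\partial_p v\|_{L^2},\|\nabla\partial_p v\|_{L^2}$ appear on the right-hand side, all of which are globally controlled by (\ref{Global-v}); the source terms involving $F,\mathcal{H}$ and the cut-off $\tau$ are handled by their uniform boundedness and Lipschitz dependence, exactly as in (\ref{qvH1050})--(\ref{F1}).

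Third, I would promote these to the time-regularity $\partial_t(v,\theta,q_v,q_c,q_r)\in L^2(0,t_1;\mathbb{H}\times(L^2)^4)$ by following the duality argument of Lemmas \ref{trv} and \ref{H1t} verbatim: test the evolution equation against an arbitrary function in the dual space, apply Hölder together with Lemma \ref{trilinear term lemma}, and observe that every coefficient in the resulting bound $A_6(t)$ is $L^2$-integrable thanks to the $H^2$-in-space control of $(\theta,q_v,q_c,q_r)$ and the above-established bounds on $v$. Finally, the Aubin-Lions compactness theorem converts the uniform spatial bounds and the $L^2$-in-time bounds on the time derivatives into the continuity $(v,\theta,q_v,q_c,q_r)\in C(0,t_1;\mathbb{V}\times(H^1)^4)$, which together with the already-derived $L^2(0,t_1;H^2)$ control of the scalars and the $L^2(\mathbb{H})$ control of $\partial_t v$ matches exactly the strong-solution definition, and collecting all constants gives $\mathcal{C}_3$ depending only on the initial data, $m$, and $t_1$.

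The main obstacle, already absorbed in Lemma \ref{H1}, is the absence of vertical viscosity in the momentum equation, which forbids any appearance of $\|\partial_p^2 v\|_{L^2(0,t;L^2)}$ on the right-hand side of the Gronwall-type inequalities. The strategy here, as there, is to use Lemma \ref{trilinear term lemma} to replace any would-be $\|\partial_p^2 v\|$ factor by the product $\|\partial_p v\|_{L^2}^{1/2}\|\nabla\partial_p v\|_{L^2}^{1/2}$ (or by $\|\Delta v\|_{L^2}$), both of which are globally controlled by (\ref{Global-v}); once this substitution is made in every trilinear estimate, the same Gronwall argument carries through on the full time interval without generating any new boundary-in-time obstruction.
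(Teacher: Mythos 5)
Your proposal is correct and follows essentially the same route as the paper: start from the global bound (\ref{Global-v}) and the quasi-strong solution of Proposition \ref{Globalquasi-strong}, rerun the $H^{1}$-estimates of Lemma \ref{H1} on all of $(0,t_{1})$ now that the Gronwall coefficients are globally integrable, obtain the time regularity as in Lemma \ref{H1t}, and conclude continuity in $\mathbb{V}\times(H^{1})^{4}$ via Aubin--Lions. Your remark on avoiding $\|\partial_{p}^{2}v\|_{L^{2}(0,t;L^{2})}$ through Lemma \ref{trilinear term lemma} is exactly the mechanism the paper relies on.
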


\section{The uniqueness of solutions}
In this section, we prove the uniqueness of quasi-strong solution to equations (\ref{e1})-(\ref{e7}). Then the uniqueness of strong solution naturally holds. In this section, we return to consider the $(v,T,q_{v},q_{c},q_{r})$ system. Considering the relationship between $T$ and $\theta$, we know that the existence results of quasi-strong and strong solution for $\theta$ still hold for $T$. In order to overcome the difficulty caused by the Heaviside function, two new unknown quantities are introduced to substitute $T$ and $q_{c}$ as in \cite{Hittmeir,Hittmeir2017}, while the monotone operator theory is used in dealing with $q_{v}$ as in \cite{Zelati}.

 Recalling the relationship between $\theta$ and $T$, we suppose that $T$ satisfies the following boundary conditions:
\begin{align*}
{\rm on}\ \Gamma_{i}:\ \partial_{p}T=T_{\ast}-T,\
 {\rm on}\ \Gamma_{u}:\ \partial_{p}T=0,\
 {\rm on}\ \Gamma_{l}:\ \partial_{\textbf{n}}T=T_{bl}-T.
\end{align*}
where $T_{\ast}$ and $T_{bl}$ are given sufficiently smooth temperature distribution.

Combining the Stampacchia method and De Giorgi iterations as in \cite{Zelati} and \cite {Hittmeir2017}, we can get the following uniform boundness result:
\begin{lemma}\label{unform boundness}
Let $(v_{0},T_{0},q_{v0},q_{c0},q_{r0})\in\mathbb{V}\times (H^{1})^{4}\cap L^{\infty}
(\mathcal{M})^{6}$, $\|v_{0}\|_{H^{1}}+\|\partial_{p}v_{0}\|_{L^{m}}+
\|v_{0}\|_{L^{\infty}}<\infty$ for some $m>2$ and $T_{0},q_{v0},q_{c0},q_{r0}$ be nonnegative initial data.  Then for any $t\in[0,t_{1}]$,
\begin{align}
0\leq q_{v}\leq q_{v}^{\ast},\ \ 0\leq q_{c}\leq q_{c}^{\ast},\ \ 0\leq q_{r}\leq q_{r}^{\ast},\ \ 0\leq T\leq T^{\ast},
\end{align}
where
\begin{align*}
q_{v}^{\ast}&={\rm max}\{\|q_{v0}\|_{L^{\infty}},\|q_{v\ast}\|_{L^{\infty}(0,t;\Gamma_{i})},
\|q_{blv}\|_{L^{\infty}(0,t;\Gamma_{l})}\},\nonumber\\
q_{c}^{\ast}&={\rm max}\{\|q_{c0}\|_{L^{\infty}},\|q_{c\ast}\|_{L^{\infty}(0,t;\Gamma_{i})},
\|q_{blc}\|_{L^{\infty}(0,t;\Gamma_{l})}\},\nonumber\\
q_{r}^{\ast}&={\rm max}\{\|q_{r0}\|_{L^{\infty}},\|q_{r\ast}\|_{L^{\infty}(0,t;\Gamma_{i})},
\|q_{blr}\|_{L^{\infty}(0,t;\Gamma_{l})}\},\nonumber\\
T^{\ast}&={\rm max}\{\|T_{0}\|_{L^{\infty}},\|T_{\ast}\|_{L^{\infty}(0,t;\Gamma_{i})},
\|T_{bl}\|_{L^{\infty}(0,t;\Gamma_{l})}\}.
\end{align*}
\end{lemma}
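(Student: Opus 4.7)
The plan is to prove these $L^\infty$ bounds by the classical Stampacchia truncation method together with De Giorgi iterations, applied equation by equation. Since Proposition \ref{Global whole} gives strong solutions with enough regularity ($H^2$ in space, $L^2$ in time for each scalar), the truncations $(q-M)^+$ and $(q-M)^-$ lie in $H^1$ and are admissible test functions. I would proceed in two stages: first establish non-negativity of $q_v, q_c, q_r, T$; then obtain the upper bounds by truncating at the stated maxima, ordering the variables so that $L^\infty$ control of one is available when I treat the next.

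For non-negativity, I would test the equation for $q_v$ against $-(q_v)^-$ and integrate over $\mathcal{M}$. The transport term $(v\cdot\nabla q_v+w\partial_p q_v)$ integrates to zero by $\nabla\cdot v+\partial_p w=0$ together with $v=0$ on $\Gamma_l$ and $w=0$ on $\Gamma_i\cup\Gamma_u$. The diffusion $\mathcal{A}_{q_v}q_v$ yields $\|\nabla(q_v)^-\|_{L^2}^2+\|\partial_p(q_v)^-\|_w^2$ plus Robin boundary contributions of the form $\int_{\Gamma_l}(q_{blv}-q_v)(q_v)^-d\Gamma_l$, which have the favorable sign because $q_{blv}\geq0$. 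The source terms cooperate: on $\{q_v<0\}\subset\{q_v<q_{vs}\}$ the Heaviside factor $\mathcal{H}(q_v-q_{vs})$ is zero, while $f_{q_v}=k_3\tau(q_r)(q_{vs}-q_v)^+\geq0$. These give $\tfrac{d}{dt}\|(q_v)^-\|_{L^2}^2\leq 0$, hence $(q_v)^-\equiv0$ by Gronwall. The same argument handles $q_c$ and $T$ (where the replacement of $F$ by $F^+$ is essential so that the condensation/evaporation source has the correct sign), and $q_r$ after integrating the sedimentation term $V_t\partial_p(\tfrac{p}{R\bar\theta}q_r)$ by parts.

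For the upper bound on $q_v$, I would test $\partial_t(q_v-q_v^*)+\mathcal{A}_{q_v}(q_v-q_v^*)+v\cdot\nabla(q_v-q_v^*)+w\partial_p(q_v-q_v^*)=\cdots$ against $(q_v-q_v^*)^+$. By construction $q_v^*\geq\|q_{v*}\|_{L^\infty(\Gamma_i)}$ and $q_v^*\geq\|q_{blv}\|_{L^\infty(\Gamma_l)}$, so the Robin boundary integrals $\int_{\Gamma_l}(q_{blv}-q_v)(q_v-q_v^*)^+$ and $\int_{\Gamma_i}(q_{v*}-q_v)(q_v-q_v^*)^+$ are non-positive. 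On $\{q_v>q_v^*\}$ the factor $(q_{vs}-q_v)^+$ vanishes once $q_v>q_{vs}$, and the Heaviside source $-w^-F^+\mathcal{H}$ is $\leq 0$, so the net source contribution is $\leq C\|(q_v-q_v^*)^+\|_{L^2}^2$ (handling the small set $\{q_v^*<q_v<q_{vs}\}$ separately via a one-step De Giorgi iteration if $q_v^*<q_{vs}$). A Gronwall estimate then forces $(q_v-q_v^*)^+\equiv0$. Bootstrapping in the order $q_v\to q_c\to q_r\to T$ yields the remaining bounds, using the $L^\infty$ estimates already obtained to control the coupled source terms ($k_2q_c\tau(q_r)$, $k_1(q_c-q_{crit})^+$, and the latent-heat term $\tfrac{L}{c_p}(W_{cd}-W_{ev})$).

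The principal obstacle will be the $q_r$-equation: it contains the rain sedimentation term $V_t\partial_p(\tfrac{p}{R\bar\theta}q_r)$ and three source terms of mixed sign whose positive parts are driven by $q_c$ and $\tau(q_r)$. After integrating the sedimentation term by parts against $(q_r-q_r^*)^+$, one picks up a boundary contribution on $\Gamma_i\cup\Gamma_u$ and an interior term proportional to $V_t\tfrac{1}{R\bar\theta}q_r\partial_p(q_r-q_r^*)^+$, which must be absorbed using the uniform bounds on $V_t$ and the $\partial_p q_r$ dissipation. Closing the source-term chain requires the $L^\infty$ bounds on $q_c$ already established in the previous step, so that $k_1(q_c-q_{crit})^++k_2q_c\tau(q_r)\leq C+C(q_r-q_r^*)^+$ on $\{q_r>q_r^*\}$, after which Gronwall and the non-negative boundary terms close the estimate.
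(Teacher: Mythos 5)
Your overall strategy coincides with the paper's: the paper gives no detailed argument for this lemma, but simply invokes the Stampacchia method and De Giorgi iterations as in \cite{Zelati} and \cite{Hittmeir2017}, which is exactly the machinery you describe (truncation at $0$ and at the stated maxima, sign of the Robin boundary terms, sign structure of the sources, and an ordering of the variables). Your treatment of nonnegativity and of the upper bound for $q_{v}$ is sound, since on $\{q_{v}>q_{v}^{\ast}\}$ the Heaviside source enters with the favorable sign $-w^{-}F^{+}\mathcal{H}\leq 0$.

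There is, however, one concrete gap, and it is precisely the point the paper singles out in its remark following the lemma. In your bootstrap you propose to control the coupled sources for $q_{c}$ and $T$ ``using the $L^{\infty}$ estimates already obtained,'' including the latent-heat term $\tfrac{L}{c_{p}}(W_{cd}-W_{ev})$. But $W_{cd}=w^{-}\tilde{F}\mathcal{H}(q_{v}-q_{vs})$ enters the $q_{c}$- and $T$-equations with the \emph{unfavorable} sign $+w^{-}F^{+}\mathcal{H}\geq 0$, it does not vanish on $\{q_{c}>q_{c}^{\ast}\}$, and $w$ admits no $L^{\infty}$ bound, so this term cannot be absorbed by a Gronwall inequality of the form $\leq C\|(q_{c}-q_{c}^{\ast})^{+}\|_{L^{2}}^{2}$ nor dominated by previously established pointwise bounds. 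One must run the genuine De Giorgi level-set iteration with a forcing that is merely $L^{p}$ in space-time, and verify that $p$ is large enough; this is where the anisotropic viscosity matters, since the required integrability of $w$ comes from $\|w\|_{H^{1}}\leq C(\|\nabla v\|_{L^{2}}+\|\Delta v\|_{L^{2}}+\|\nabla\partial_{p}v\|_{L^{2}})$, whose $L^{2}$-integrability in time is supplied by Proposition \ref{Global whole} despite the absence of vertical viscosity in the dynamic equation. Your proposal should make this step explicit; as written, the closing of the $q_{c}$ and $T$ upper bounds would fail.
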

\begin{remark}
During the De Giorgi iterations process as in \cite{Zelati}, one should note that
$\|w\|_{H^{1}}^{2}$ can be bounded by $\|\Delta v\|_{L^{2}}^{2}+\|\nabla\partial_{p}v\|_{L^{2}}^{2}$, whose $L^{2}$ integrability in time interval $[0,t_{1}]$ can be ensured by the result in Proposition \ref{Global whole}.
\end{remark}

\begin{proposition}\label{uniqueness}
Assume $(v_{1},T_{1},q_{v1},q_{c1},q_{r1})$ and  $(v_{2},T_{2},q_{v2},q_{c2},q_{r2})$ are two quasi-strong solutions to equations (\ref{e1})-(\ref{e7}) corresponding to the same initial data $(v_{0},T_{0},q_{v0},q_{c0},q_{r0})$, with the function $F$ in source terms replaced by its positive part $F^{+}$. Then
\begin{align*}
v_{1}=v_{2},T_{1}=T_{2},q_{v1}=q_{v2},q_{c1}=q_{c2},q_{r1}=q_{r2},
\end{align*}
in the sense of $L^{2}$.
\end{proposition}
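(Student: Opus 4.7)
The plan is to set $\tilde v = v_1 - v_2$, $\tilde w = w_1 - w_2$, $\tilde T = T_1 - T_2$, $\tilde\Phi = \Phi_1 - \Phi_2$, and $\tilde q_j = q_{j1} - q_{j2}$ for $j \in \{v, c, r\}$, and to show via an energy/Gronwall argument that these differences must vanish identically. The main obstacle is the multi-valued Heaviside source $w^{-} F^{+} \mathcal{H}(q_v - q_{vs})$ appearing in the $T$-, $q_v$-, and $q_c$-equations, which is not Lipschitz. To neutralise it, I would introduce, following the suggestion at the end of the introduction,
\begin{equation*}
Q_i = q_{vi} + q_{ci}, \qquad H_i = T_i + \frac{L}{c_p} q_{vi}, \qquad i=1,2.
\end{equation*}
Adding the $q_v$- and $q_c$-equations, the contributions $\mp w^{-} F^{+} \mathcal{H}$ cancel; combining the $T$- and $q_v$-equations with weight $L/c_p$, the condensation/evaporation contributions $\pm (L/c_p)(W_{cd} - W_{ev})$ cancel. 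Consequently $\tilde Q = Q_1 - Q_2$ and $\tilde H = H_1 - H_2$ satisfy transport-diffusion equations whose source differences are Lipschitz in $(T, q_v, q_c, q_r)$, thanks to the Lipschitz property of $\tau$ and $F^{+}$ together with the uniform $L^{\infty}$ bounds of Lemma \ref{unform boundness}. From $\tilde Q$, $\tilde H$ and $\tilde q_v$ one recovers $\tilde q_c = \tilde Q - \tilde q_v$ and $\tilde T = \tilde H - (L/c_p)\tilde q_v$.

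The second step is to derive $L^{2}$ energy estimates for $\tilde v$, $\tilde Q$, $\tilde H$, and $\tilde q_r$ by testing each equation with itself. The pressure-gradient contribution $\int \nabla\tilde\Phi \cdot \tilde v \, d\mathcal{M}$ reduces to a $\tilde T$-term via \eqref{e2}. The principal technical difficulty lies in the transport terms $\int (\tilde v \cdot \nabla)\phi_{1} \cdot \tilde\phi\, d\mathcal{M}$ and $\int \tilde w\, \partial_{p}\phi_{1}\cdot\tilde\phi\, d\mathcal{M}$ for $\phi\in\{v, Q, H, q_{r}\}$; using the anisotropic inequalities of Lemmas \ref{HHP}--\ref{trilinear term lemma}, the relation $\tilde w(p) = -\int_{p_{0}}^{p}\nabla\cdot\tilde v\, dp'$, and the full strong-solution regularity of $v_{i}$ and $\phi_{i}$ granted by Proposition \ref{Global whole}, each such trilinear integral can be bounded by the sum of a term absorbable into the horizontal dissipation $\|\nabla\tilde v\|_{L^{2}}^{2} + \|\nabla\tilde\phi\|_{L^{2}}^{2}$ and a $L^{1}(0,t_{1})$-in-time factor multiplying $\|\tilde v\|_{L^{2}}^{2} + \|\tilde\phi\|_{L^{2}}^{2}$.

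For $\tilde q_v$ I would test the difference of the two $q_v$-equations against $\tilde q_v$ itself. The Heaviside contribution splits as
\begin{equation*}
-\int_{\mathcal{M}} w_{1}^{-} F^{+}(T_{1}) (h_{q_{v1}} - h_{q_{v2}}) \tilde q_v\, d\mathcal{M} - \int_{\mathcal{M}} \bigl(w_{1}^{-} F^{+}(T_{1}) - w_{2}^{-} F^{+}(T_{2})\bigr) h_{q_{v2}}\, \tilde q_v\, d\mathcal{M}.
\end{equation*}
The first integral is $\leq 0$: the pointwise monotonicity of $\mathcal{H}$ (consistent with the selection guaranteed by the variational inequality \eqref{variational inequality}) yields $(h_{q_{v1}} - h_{q_{v2}})(q_{v1} - q_{v2}) \geq 0$ a.e., and the positivity of the prefactor $w_{1}^{-} F^{+}(T_{1})$ is exactly the reason for the replacement $F \to F^{+}$ in the statement. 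The second integral is controlled by $C(\|\tilde w\|_{L^{2}}^{2} + \|\tilde T\|_{L^{2}}^{2} + \|\tilde q_v\|_{L^{2}}^{2})$ using $|h_{q_{v2}}| \leq 1$, the Lipschitz continuity of $F^{+}$, and $\|\tilde w\|_{L^{2}} \leq C\|\nabla\tilde v\|_{L^{2}}$. Summing every estimate and invoking $\tilde T = \tilde H - (L/c_p)\tilde q_v$, $\tilde q_c = \tilde Q - \tilde q_v$ produces a differential inequality of the form $\tfrac{d}{dt} E(t) \leq A(t) E(t)$ with $E(t) = \|\tilde v\|_{L^{2}}^{2} + \|\tilde Q\|_{L^{2}}^{2} + \|\tilde H\|_{L^{2}}^{2} + \|\tilde q_v\|_{L^{2}}^{2} + \|\tilde q_r\|_{L^{2}}^{2}$ and $A \in L^{1}(0, t_{1})$. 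Since $E(0) = 0$, Gronwall's inequality forces $E \equiv 0$ on $[0, t_{1}]$, whence the asserted uniqueness. The step I expect to be the main obstacle is the trilinear estimate for the $\tilde v$-equation: because only horizontal viscosity is available and $\|\partial_{p}^{2} v\|_{L^{2}}$ is not controlled, closure requires delicate interplay between Lemmas \ref{HHP}--\ref{trilinear term lemma} and the $L^{m}$-bound on $\partial_{p} v_{i}$ from Proposition \ref{Global whole}.
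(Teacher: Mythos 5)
Your proposal is correct and follows essentially the same route as the paper: the substitutions $Q=q_v+q_c$ and $H=T+\tfrac{L}{c_p}q_v$ to cancel the Heaviside terms, the sign argument combining the monotonicity of $\mathcal{H}$ with the positivity of $w^{-}F^{+}$ for the remaining $\hat q_v$ contribution, the anisotropic trilinear estimates for the $\hat v$-equation, and a Gronwall closure on the combined $L^2$ energy. The only points the paper treats slightly differently are that it controls the transport terms in the scalar equations via integration by parts and the uniform $L^\infty$ bounds of Lemma \ref{unform boundness} rather than the trilinear lemmas, and it explicitly estimates the extra bilinear term $\tfrac{R}{c_p p}(T_1\hat w+w_2\hat T)$ in the $\hat H$-equation by interpolation, a term your sketch leaves implicit.
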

\begin{remark}
It is worth noting that the uniqueness holds under the assumption that the function $F$ is replaced by its positive part $F^{+}$. This assumption is in line with physical reality. For more detailed explanation, we refer readers to \cite{Zelati}. In addition, due to the uniform boundness results in Lemma \ref{unform boundness}, we can take $\tau(q_{r})=q_{r}$ in this section.
\end{remark}
\begin{proof}
Let $(v_{1},T_{1},q_{v1},q_{c1},q_{r1})$ and $(v_{2},T_{2},q_{v2},q_{c2},q_{r2})$ be two global quasi-strong solutions corresponding to initial data $(v_{1}^{0},T_{1}^{0},q_{v1}^{0},q_{c1}^{0},q_{r1}^{0})$ and $(v_{2}^{0},T_{2}^{0},q_{v2}^{0},q_{c2}^{0},q_{r2}^{0})$ respectively.

Set
\begin{align*}
\hat{v}=v_{1}-v_{2},\ \hat{T}=T_{1}-T_{2},\ \hat{q}_{j}=q_{j1}-q_{j2}, j\in\{v,c,r\}.
\end{align*}
Then $\hat{v}$ satisfies that
\begin{align}\label{hat{v0}}
\partial_{t}\hat{v}-\Delta\hat{v}+(v_{1}\cdot\nabla)\hat{v}+w_{1}\partial_{p}\hat{v}+
(\hat{v}\cdot\nabla)v_{2}+\hat{w}\partial_{p}v_{2}+f\hat{v}^{\bot}+\nabla\hat{\Phi}_{s}+\nabla\int_{p}^{p_{1}}\frac{R}{p'}\hat{T}dp'=0,
\end{align}
where $\hat{w}=w_{1}-w_{2}$ and $\hat{\Phi}_{s}=\Phi_{s1}-\Phi_{s2}$. And the corresponding boundary conditions are
 \begin{align}
 {\rm on}\ \Gamma_{i}:\ \partial_{p}\hat{v}=0,\ \hat{w}=0,\
 {\rm on}\ \Gamma_{u}:\ \partial_{p}\hat{v}=0,\ \hat{w}=0,\ \partial_{p}\hat{w}=0,\
 {\rm on}\ \Gamma_{l}:\ \hat{v}=0,\ \partial_{\textbf{n}}\hat{v}=0.\nonumber
 \end{align}

Taking the inner product of equation (\ref{hat{v0}}) with $\hat{v}$ in $L^{2}$ space, using integration by parts, we can deduce that
\begin{align*}
\frac{1}{2}\frac{d}{dt}\|\hat{v}\|_{L^{2}}^{2}+\|\nabla \hat{v}\|_{L^{2}}^{2}
=&-\int_{\mathcal{M}}\left[(\hat{v}\cdot\nabla)v_{2}+\hat{w}\partial_{p}v_{2}\right]\cdot\hat{v}d\mathcal{M}
-\int_{\mathcal{M}}\left[(v_{1}\cdot\nabla)\hat{v}+w_{1}\partial_{p}\hat{v}\right]\cdot\hat{v}d\mathcal{M}\nonumber\\
&-\int_{\mathcal{M}}f\hat{v}^{\bot}\cdot\hat{v}d\mathcal{M}-
\int_{\mathcal{M}}\nabla\hat{\Phi}_{s}\cdot\hat{v}d\mathcal{M}
-\int_{\mathcal{M}}\nabla\int_{p}^{p_{1}}\frac{R}{p'}\hat{T}dp'\cdot\hat{v}d\mathcal{M}.
\end{align*}
It is easy to verify that
\begin{align*}
\int_{\mathcal{M}}f\hat{v}^{\bot}\cdot\hat{v}d\mathcal{M}=0.
\end{align*}
By integration by parts, we can obtain that
\begin{align*}
\int_{\mathcal{M}}\nabla\hat{\Phi}_{s}\cdot\hat{v}d\mathcal{M}=0,
\end{align*}
and
\begin{align*}
\int_{\mathcal{M}}\left[(v_{1}\cdot\nabla)\hat{v}+w_{1}\partial_{p}\hat{v}\right]\cdot\hat{v}d\mathcal{M}=0.
\end{align*}
Using integration by parts, the H\"older inequality and the Young inequality, we can infer that
\begin{align*}
\int_{\mathcal{M}}\nabla\int_{p}^{p_{1}}\frac{R}{p'}\hat{T}dp'\cdot\hat{v}d\mathcal{M}
\leq C\|\hat{T}\|_{L^{2}}^{2}+\frac{1}{8}\|\nabla\hat{v}\|_{L^{2}}^{2}.
\end{align*}
Utilizing the inequality in Lemma \ref{HHP}, we can get that
\begin{align*}
\int_{\mathcal{M}}(\hat{v}\cdot\nabla)v_{2}\cdot\hat{v}d\mathcal{M}
\leq& C\|\hat{v}\|_{L^{2}}\|\nabla\hat{v}\|_{L^{2}}(\|\nabla v_{2}\|_{L^{2}}+\|\nabla v_{2}\|_{L^{2}}^{\frac{1}{2}}\|\nabla\partial_{p}v_{2}\|_{L^{2}}^{\frac{1}{2}})\nonumber\\
\leq& C(\|\nabla v_{2}\|_{L^{2}}^{2}+\|\nabla\partial_{p}v_{2}\|_{L^{2}}^{2})\|\hat{v}\|_{L^{2}}^{2}
+\frac{1}{8}\|\nabla\hat{v}\|_{L^{2}}^{2}.
\end{align*}
Considering the inequality in Lemma \ref{trilinear term lemma}, we have
\begin{align*}
&\int_{\mathcal{M}}\hat{w}\partial_{p}v_{2}\cdot\hat{v}d\mathcal{M}
\leq\int_{\mathcal{M}'}\int_{p_{0}}^{p_{1}}|\nabla\hat{v}|dp
\int_{p_{0}}^{p_{1}}|\partial_{p}v_{2}||\hat{v}|dpd\mathcal{M}'\nonumber\\
\leq& C\|\nabla\hat{v}\|_{L^{2}}\|\partial_{p}v_{2}\|_{L^{2}}^{\frac{1}{2}}
(\|\partial_{p}v_{2}\|_{L^{2}}^{\frac{1}{2}}+\|\nabla\partial_{p}v_{2}\|_{L^{2}}^{\frac{1}{2}})
\|\hat{v}\|_{L^{2}}^{\frac{1}{2}}(\|\hat{v}\|_{L^{2}}^{\frac{1}{2}}+\|\nabla\hat{v}\|_{L^{2}}^{\frac{1}{2}})\nonumber\\
\leq& C\left(\|\partial_{p}v_{2}\|_{L^{2}}^{2}+\|\partial_{p}v_{2}\|_{L^{2}}^{4}+\|\nabla\partial_{p}v_{2}\|_{L^{2}}^{2}
+\|\partial_{p}v_{2}\|_{L^{2}}^{2}\|\nabla\partial_{p}v_{2}\|_{L^{2}}^{2} \right)\|\hat{v}\|_{L^{2}}^{2}
+\frac{1}{8}\|\nabla\hat{v}\|_{L^{2}}^{2}.
\end{align*}
Thus combining all the above inequalities, we can deduce that
\begin{align}\label{hat{v}}
\frac{d}{dt}\|\hat{v}\|_{L^{2}}^{2}+2\|\nabla \hat{v}\|_{L^{2}}^{2}
\leq &C\left(\|\nabla v_{2}\|_{L^{2}}^{2}+\|\nabla\partial_{p}v_{2}\|_{L^{2}}^{2}+\|\partial_{p}v_{2}\|_{L^{2}}^{2}
+\|\partial_{p}v_{2}\|_{L^{2}}^{4}+\|\nabla\partial_{p}v_{2}\|_{L^{2}}^{2}\right.\nonumber\\ &\left.
+\|\partial_{p}v_{2}\|_{L^{2}}^{2}\|\nabla\partial_{p}v_{2}\|_{L^{2}}^{2} \right)\|\hat{v}\|_{L^{2}}^{2}+C\|\hat{T}\|_{L^{2}}^{2}+\frac{3}{4}\|\nabla\hat{v}\|_{L^{2}}^{2}.
\end{align}

Set
\begin{align*}
Q=q_{v}+q_{c},\ \ H=T+\frac{L}{c_{p}\Pi}q_{v}.
\end{align*}
Then $\hat{Q}=Q_{1}-Q_{2}$ satisfies that
\begin{align}\label{hat{Q0}}
\partial_{t}\hat{Q}+v_{1}\cdot\nabla\hat{Q}+w_{1}\partial_{p}\hat{Q}+\hat{v}\cdot\nabla Q_{2}+\hat{w}\partial_{p} Q_{2}+A_{q}\hat{Q}
=f_{q_{v1}}-f_{q_{v2}}+f_{q_{c1}}-f_{q_{c2}},
\end{align}
with the boundary conditions
\begin{align*}
{\rm on}\ \Gamma_{i}:\ \partial_{p}\hat{Q}=-\hat{Q},\
 {\rm on}\ \Gamma_{u}:\ \partial_{p}\hat{Q}=0, \
 {\rm on}\ \Gamma_{l}:\ \partial_{\textbf{n}}\hat{Q}=-\hat{Q}.
\end{align*}
Taking the inner product of equation (\ref{hat{Q0}}) with $\hat{Q}$ in $L^{2}$ space, we can deduce that
\begin{align*}
&\frac{1}{2}\frac{d}{dt}\|\hat{Q}\|_{L^{2}}^{2}+\int_{\mathcal{M}}\hat{Q}A_{q}\hat{Q}d\mathcal{M}
+\int_{\mathcal{M}}\left[\hat{v}\cdot\nabla Q_{2}+\hat{w}\partial_{p} Q_{2}\right]\hat{Q}d\mathcal{M}\nonumber\\
=&\int_{\mathcal{M}}\left(f_{q_{v1}}-f_{q_{v2}}\right)\hat{Q}d\mathcal{M}
+\int_{\mathcal{M}}\left(f_{q_{c1}}-f_{q_{c2}}\right)\hat{Q}d\mathcal{M},
\end{align*}
where we have used
\begin{align*}
\int_{\mathcal{M}}(v_{1}\cdot\nabla\hat{Q}+w_{1}\partial_{p}\hat{Q})\hat{Q}d\mathcal{M}=0.
\end{align*}
Through integration by parts and considering the uniform boundness of $q_{v},q_{c}$, we can infer that
\begin{align}\label{multiterm}
\int_{\mathcal{M}}\left[\hat{v}\cdot\nabla Q_{2}+\hat{w}\partial_{p} Q_{2}\right]\hat{Q}d\mathcal{M}
=&-\int_{\mathcal{M}}\left[\hat{v}\cdot\nabla \hat{Q}+\hat{w}\partial_{p}\hat{Q}\right]Q_{2}d\mathcal{M}\nonumber\\
\leq&\|Q_{2}\|_{L^{\infty}}\|\hat{v}\|_{L^{2}}\|\nabla\hat{Q}\|_{L^{2}}+
\|Q_{2}\|_{L^{\infty}}\|\hat{w}\|_{L^{2}}\|\partial_{p}\hat{Q}\|_{L^{2}}\nonumber\\
\leq& C\|\hat{v}\|_{L^{2}}^{2}+\frac{1}{8}\|\nabla\hat{Q}\|_{L^{2}}^{2}+\frac{1}{8}\|\partial_{p}\hat{Q}\|_{L^{2}}^{2}
+\frac{1}{8\delta^{2}}\|\nabla\hat{v}\|_{L^{2}}^{2},
\end{align}
where $\delta$ is a sufficiently small fixed constant.
Recalling the definition of $f_{q_{v}}$, considering the uniform boundness of $q_{r}$ and $q_{v}$, we can infer that
\begin{align*}
&\int_{\mathcal{M}}\left(f_{q_{v1}}-f_{q_{v2}}\right)\hat{Q}d\mathcal{M}
=\int_{\mathcal{M}}\left[k_{3}q_{r1}(q_{vs}-q_{v1})^{+}-k_{3}q_{r2}(q_{vs}-q_{v2})^{+}\right]\hat{Q}d\mathcal{M}\nonumber\\
=&k_{3}\int_{\mathcal{M}}(q_{r1}-q_{r2})(q_{vs}-q_{v1})^{+}\hat{Q}d\mathcal{M}
+k_{3}\int_{\mathcal{M}}q_{r2}\left[(q_{vs}-q_{v1})^{+}-(q_{vs}-q_{v2})^{+}\right]\hat{Q}d\mathcal{M}\nonumber\\
\leq& C\|q_{r1}-q_{r2}\|_{L^{2}}\|\hat{Q}\|_{L^{2}}+C\|q_{v1}-q_{v2}\|_{L^{2}}\|\hat{Q}\|_{L^{2}}\leq C(\|\hat{q}_{r}\|_{L^{2}}^{2}+\|\hat{q}_{v}\|_{L^{2}}^{2}+\|\hat{Q}\|_{L^{2}}^{2}).
\end{align*}
Similarly,
\begin{align*}
&\int_{\mathcal{M}}\left(f_{q_{c1}}-f_{q_{c2}}\right)\hat{Q}d\mathcal{M}\nonumber\\
=&\int_{\mathcal{M}}\left(-k_{1}(q_{c1}-q_{crit})^{+}-k_{2}q_{c1}q_{r1}+k_{1}(q_{c2}-q_{crit})^{+}+k_{2}q_{c2}q_{r2}\right)\hat{Q}d\mathcal{M}\nonumber\\
=&-k_{1}\int_{\mathcal{M}}\left[(q_{c1}-q_{crit})^{+}-(q_{c2}-q_{crit})^{+}\right]\hat{Q}d\mathcal{M}
-k_{2}\int_{\mathcal{M}}\left(q_{c1}q_{r1}-q_{c2}q_{r2}\right)\hat{Q}d\mathcal{M}\nonumber\\
\leq& C\|q_{c1}-q_{c2}\|_{L^{2}}\|\hat{Q}\|_{L^{2}}+C\|q_{r1}-q_{r2}\|_{L^{2}}\|\hat{Q}\|_{L^{2}}\leq C(\|\hat{q}_{r}\|_{L^{2}}^{2}+\|\hat{q}_{c}\|_{L^{2}}^{2}+\|\hat{Q}\|_{L^{2}}^{2})\nonumber\\
\leq& C(\|\hat{q}_{v}\|_{L^{2}}^{2}+\|\hat{q}_{r}\|_{L^{2}}^{2}+\|\hat{Q}\|_{L^{2}}^{2}).
\end{align*}
Integrating by parts and considering the boundary condition of $\hat{Q}$, we have
\begin{align*}
\int_{\mathcal{M}}\hat{Q}A_{q}\hat{Q}d\mathcal{M}=\|\nabla\hat{Q}\|_{L^{2}}^{2}
+\|\partial_{p}\hat{Q}\|_{w}^{2}-\int_{\Gamma_{l}}\hat{Q}\partial_{\textbf{n}}\hat{Q}d\Gamma_{l}
-\int_{\Gamma_{i}}\hat{Q}\partial_{p}\hat{Q}d\Gamma_{i}\geq\|\nabla\hat{Q}\|_{L^{2}}^{2}
+\|\partial_{p}\hat{Q}\|_{w}^{2}.
\end{align*}
 Thus combining the above inequalities, we can deduce that
 \begin{align}\label{hat{Q}}
\frac{d}{dt}\|\hat{Q}\|_{L^{2}}^{2}+\|\nabla\hat{Q}\|_{L^{2}}^{2}
+\|\partial_{p}\hat{Q}\|_{w}^{2}
\leq C(\|\hat{v}\|_{L^{2}}^{2}+\|\hat{q}_{v}\|_{L^{2}}^{2}+\|\hat{q}_{r}\|_{L^{2}}^{2}+\|\hat{Q}\|_{L^{2}}^{2})
+\frac{1}{8\delta^{2}}\|\nabla\hat{v}\|_{L^{2}}^{2}.
\end{align}

Next, we consider the $L^{2}$ estimate for $\hat{q}_{v}$.
It is easy to check that
\begin{align}\label{hat{q}}
&\partial_{t}\hat{q}_{v}+v_{1}\cdot\nabla\hat{q}_{v}+w_{1}\partial_{p}\hat{q}_{v}+A_{q}\hat{q}_{v}
+\hat{v}\cdot\nabla q_{v2}+\hat{w}\partial_{p} q_{v2}+A_{q}\hat{q}_{v}\nonumber\\
=&f_{q_{v1}}-f_{q_{v2}}-w_{1}^{-}F^{+}(T_{1})h(q_{v1}-q_{vs})+w_{2}^{-}F^{+}(T_{2})h(q_{v2}-q_{vs}).
\end{align}
Taking the inner product of equation (\ref{hat{q}}) with $\hat{q}_{v}$ in $L^{2}$ space, we have
\begin{align*}
&\frac{1}{2}\frac{d}{dt}\|\hat{q}_{v}\|_{L^{2}}^{2}+\int_{\mathcal{M}}\hat{q}_{v}A_{q}\hat{q}_{v}d\mathcal{M}
+\int_{\mathcal{M}}(\hat{v}\cdot\nabla q_{v2}+\hat{w}\partial_{p} q_{v2})\hat{q}_{v}d\mathcal{M}\nonumber\\
=&\int_{\mathcal{M}}\left(f_{q_{v1}}-f_{q_{v2}}\right)\hat{q}_{v}d\mathcal{M}
+\int_{\mathcal{M}}\left[-w_{1}^{-}F^{+}(T_{1})h(q_{v1}-q_{vs})+w_{2}^{-}F^{+}(T_{2})h(q_{v2}-q_{vs})\right]\hat{q}_{v}d\mathcal{M}.
\end{align*}
Through integration by parts, we have
\begin{align*}
\int_{\mathcal{M}}\hat{q}_{v}A_{q}\hat{q}_{v}d\mathcal{M}\geq \|\nabla\hat{q}_{v}\|_{L^{2}}^{2}+\|\partial_{p}\hat{q}_{v}\|_{w}^{2}.
\end{align*}
Through a similar calculation as in (\ref{multiterm}), we can infer that
 \begin{align*}
\int_{\mathcal{M}}(\hat{v}\cdot\nabla q_{v2}+\hat{w}\partial_{p} q_{v2})\hat{q}_{v}d\mathcal{M}=&-\int_{\mathcal{M}}(\hat{v}\cdot\nabla \hat{q}_{v}+\hat{w}\partial_{p}\hat{q}_{v})q_{v2}d\mathcal{M}\nonumber\\
\leq&\|q_{v2}\|_{L^{\infty}}\|\hat{v}\|_{L^{2}}\|\nabla\hat{q}_{v}\|_{L^{2}}+
\|q_{v2}\|_{L^{\infty}}\|\hat{w}\|_{L^{2}}\|\partial_{p}\hat{q}_{v}\|_{L^{2}}\nonumber\\
\leq& C\|\hat{v}\|_{L^{2}}^{2}+\frac{1}{8}\|\nabla\hat{q}_{v}\|_{L^{2}}^{2}+\frac{1}{8}\|\partial_{p}\hat{q}_{v}\|_{L^{2}}^{2}
+\frac{1}{8\delta^{2}}\|\nabla\hat{v}\|_{L^{2}}^{2},
\end{align*}
where we have used the uniform boundness of $q_{v}$.
Recalling the definition of $f_{q_{v}}$, we have
\begin{align*}
&\int_{\mathcal{M}}\left(f_{q_{v1}}-f_{q_{v2}}\right)\hat{q}_{v}d\mathcal{M}
=\int_{\mathcal{M}}\left[k_{3}q_{r1}(q_{vs}-q_{v1})^{+}-k_{3}q_{r2}(q_{vs}-q_{v2})^{+}\right]\hat{q}_{v}d\mathcal{M}\nonumber\\
=&k_{3}\int_{\mathcal{M}}(q_{r1}-q_{r2})(q_{vs}-q_{v1})^{+}\hat{q}_{v}d\mathcal{M}
+k_{3}\int_{\mathcal{M}}q_{r2}\left[(q_{vs}-q_{v1})^{+}-(q_{vs}-q_{v2})^{+}\right]\hat{q}_{v}d\mathcal{M}\nonumber\\
\leq& C\|\hat{q}_{r}\|_{L^{2}}\|\hat{q}_{v}\|_{L^{2}}+C\|\hat{q}_{v}\|_{L^{2}}\|\hat{q}_{v}\|_{L^{2}}\leq C(\|\hat{q}_{r}\|_{L^{2}}^{2}+\|\hat{q}_{v}\|_{L^{2}}^{2}).
\end{align*}
Through a direct calculation, we have
\begin{align*}
&\int_{\mathcal{M}}\left[-w_{1}^{-}F^{+}(T_{1})h(q_{v1}-q_{vs})+w_{2}^{-}F^{+}(T_{2})h(q_{v2}-q_{vs})\right]\hat{q}_{v}d\mathcal{M}\nonumber\\
=&\int_{\mathcal{M}}\left[-w_{1}^{-}F^{+}(T_{1})h(q_{v1}-q_{vs})+w_{2}^{-}F^{+}(T_{1})h(q_{v1}-q_{vs})\right]\hat{q}_{v}d\mathcal{M}\nonumber\\
&+\int_{\mathcal{M}}\left[w_{2}^{-}F^{+}(T_{2})h(q_{v1}-q_{vs})-w_{2}^{-}F^{+}(T_{1})h(q_{v1}-q_{vs})\right]\hat{q}_{v}d\mathcal{M}\nonumber\\
&+\int_{\mathcal{M}}\left[w_{2}^{-}F^{+}(T_{2})h(q_{v2}-q_{vs})-w_{2}^{-}F^{+}(T_{2})h(q_{v1}-q_{vs})\right]\hat{q}_{v}d\mathcal{M}\nonumber
\end{align*}
Utilizing the uniform boundness of $F$ and $h(q_{v}-q_{vs})$, we have
\begin{align*}
&\int_{\mathcal{M}}\left[-w_{1}^{-}F^{+}(T_{1})h(q_{v1}-q_{vs})+w_{2}^{-}F^{+}(T_{1})h(q_{v1}-q_{vs})\right]\hat{q}_{v}d\mathcal{M}\nonumber\\
\leq &C\|w_{1}^{-}-w_{2}^{-}\|_{L^{2}}\|\hat{q}_{v}\|_{L^{2}}\leq C\|\hat{q}_{v}\|_{L^{2}}^{2}+\frac{1}{8}\|\nabla\hat{v}\|_{L^{2}}^{2}.
\end{align*}
Considering the Lipschitz continuity of $F$ and the uniform boundness of $h(q_{v}-q_{vs})$, we can infer that
\begin{align*}
&\int_{\mathcal{M}}\left[w_{2}^{-}F^{+}(T_{2})h(q_{v1}-q_{vs})-w_{2}^{-}F^{+}(T_{1})h(q_{v1}-q_{vs})\right]\hat{q}_{v}d\mathcal{M}\nonumber\\
\leq& C\|w_{2}^{-}\|_{L^{2}}\|F^{+}(T_{2})-F^{+}(T_{1})\|_{L^{6}}\|\hat{q}_{v}\|_{L^{3}}
\leq C\|v_{2}\|_{H^{1}}\|\hat{T}\|_{H^{1}}\|\hat{q}_{v}\|_{L^{2}}^{\frac{1}{2}}
\|\hat{q}_{v}\|_{H^{1}}^{\frac{1}{2}}\nonumber\\
\leq& C\|v_{2}\|_{H^{1}}^{4}\|\hat{q}_{v}\|_{L^{2}}^{2}+\frac{1}{8}\|\hat{q}_{v}\|_{H^{1}}^{2}
+\frac{1}{8}\|\hat{T}\|_{H^{1}}^{2}.
\end{align*}
Considering the monotonicity of the heaviside function $q_{v}\rightarrow h(q_{v}-q_{vs})$ and the positivity of $F^{+}$, we get
\begin{align*}
\int_{\mathcal{M}}\left[w_{2}^{-}F^{+}(T_{2})h(q_{v2}-q_{vs})-w_{2}^{-}F^{+}(T_{2})h(q_{v1}-q_{vs})\right]\hat{q}_{v}d\mathcal{M}\leq 0.
\end{align*}
Therefore
\begin{align*}
&\int_{\mathcal{M}}\left[-w_{1}^{-}F^{+}(T_{1})h(q_{v1}-q_{vs})+w_{2}^{-}F^{+}(T_{2})h(q_{v2}-q_{vs})\right]\hat{q}_{v}d\mathcal{M}\nonumber\\
\leq & C(1+\|v_{2}\|_{H^{1}}^{4})\|\hat{q}_{v}\|_{L^{2}}^{2}+\frac{1}{8}\|\nabla\hat{v}\|_{L^{2}}^{2}
+\frac{1}{8}\|\hat{q}_{v}\|_{H^{1}}^{2}
+\frac{1}{8}\|\hat{T}\|_{H^{1}}^{2}\nonumber\\
\leq&C(1+\|v_{2}\|_{H^{1}}^{4})\|\hat{q}_{v}\|_{L^{2}}^{2}+\frac{1}{8\delta^{2}}\|\nabla\hat{v}\|_{L^{2}}^{2}
+\frac{1}{4}\|\hat{q}_{v}\|_{H^{1}}^{2}
+\frac{1}{8}\|\hat{H}\|_{H^{1}}^{2}.
\end{align*}
Thus
\begin{align}\label{hat{q}_{v}}
&\frac{d}{dt}\|\hat{q}_{v}\|_{L^{2}}^{2}+\|\nabla\hat{q}_{v}\|_{L^{2}}^{2}+\|\partial_{p}\hat{q}_{v}\|_{L^{2}}^{2}\nonumber\\
\leq& C(1+\|v_{2}\|_{H^{1}}^{4})(\|\hat{v}\|_{L^{2}}^{2}+\|\hat{q}_{v}\|_{L^{2}}^{2}+\|\hat{q}_{r}\|_{L^{2}}^{2}+\|\hat{v}\|_{L^{2}}^{2})
+\frac{1}{4\delta^{2}}\|\nabla\hat{v}\|_{L^{2}}^{2}+\frac{1}{8}\|\hat{H}\|_{H^{1}}^{2}.
\end{align}

Next we consider the estimate for $\hat{q}_{r}$, Through a similar argument to $\hat{q}_{v}$, we can obtain that
\begin{align}\label{hat{q}_{r0}}
&\frac{1}{2}\frac{d}{dt}\|\hat{q}_{r}\|_{L^{2}}^{2}+\|\nabla\hat{q}_{r}\|_{L^{2}}^{2}+\|\partial_{p}\hat{q}_{r}\|_{L^{2}}^{2}\nonumber\\
\leq& \int_{\mathcal{M}}\left[\hat{v}\cdot\nabla q_{r2}+\hat{w}\partial_{p} q_{r2}\right]\hat{q}_{r}d\mathcal{M}+\int_{\mathcal{M}}\left(f_{q_{r1}}-f_{q_{r2}}\right)\hat{q}_{r}d\mathcal{M}.
\end{align}
Through a similar argument as in (\ref{multiterm}), we have
\begin{align}\label{hat{q}_{r1}}
\int_{\mathcal{M}}\left[\hat{v}\cdot\nabla q_{r2}+\hat{w}\partial_{p} q_{r2}\right]\hat{q}_{r}d\mathcal{M}=&-\int_{\mathcal{M}}\left[\hat{v}\cdot\nabla \hat{q}_{r}+\hat{w}\partial_{p}\hat{q}_{r}\right]q_{r2}d\mathcal{M}\nonumber\\
\leq&\|q_{r2}\|_{L^{\infty}}\|\hat{v}\|_{L^{2}}\|\nabla\hat{q}_{r}\|_{L^{2}}+
\|q_{r2}\|_{L^{\infty}}\|\hat{w}\|_{L^{2}}\|\partial_{p}\hat{q}_{r}\|_{L^{2}}\nonumber\\
\leq& C\|\hat{v}\|_{L^{2}}^{2}+\frac{1}{2}\|\nabla\hat{q}_{r}\|_{L^{2}}^{2}+\frac{1}{4}\|\partial_{p}\hat{q}_{r}\|_{L^{2}}^{2}
+\frac{1}{8\delta^{2}}\|\nabla\hat{v}\|_{L^{2}}^{2}.
\end{align}
Recalling the definition of $f_{q_{r}},$ we infer that
\begin{align}\label{hat{q}_{r2}}
&\int_{\mathcal{M}}\left(f_{q_{r1}}-f_{q_{r2}}\right)\hat{q}_{r}d\mathcal{M}\nonumber\\
\leq& C\int_{\mathcal{M}}\partial_{p}(\frac{p}{R\bar{\theta}}\hat{q}_{r})\hat{q}_{r}d\mathcal{M}
+C\int_{\mathcal{M}}\left((q_{c1}-q_{crit})^{+}-(q_{c2}-q_{crit})^{+}\right)\hat{q}_{r}d\mathcal{M}\nonumber\\
&+C\int_{\mathcal{M}}\left(q_{c1}q_{r1}-q_{c2}q_{r2}\right)\hat{q}_{r}d\mathcal{M}
+C\int_{\mathcal{M}}\left[q_{r1}(q_{vs}-q_{v1})^{+}-q_{r2}(q_{vs}-q_{v2})^{+}\right]d\mathcal{M}\nonumber\\
\leq& C(\|\hat{q}_{v}\|_{L^{2}}^{2}+\|\hat{q}_{r}\|_{L^{2}}^{2}+\|\hat{Q}\|_{L^{2}}^{2})
+\frac{1}{4}\|\partial_{p}\hat{q}_{r}\|_{L^{2}}^{2}.
\end{align}
Substituting (\ref{hat{q}_{r1}}), (\ref{hat{q}_{r2}}) into (\ref{hat{q}_{r0}}), we can deduce that
\begin{align}\label{hat{q}_{r}}
\frac{d}{dt}\|\hat{q}_{r}\|_{L^{2}}^{2}+\|\nabla\hat{q}_{r}\|_{L^{2}}^{2}+\|\partial_{p}\hat{q}_{r}\|_{L^{2}}^{2}
\leq C\left(\|\hat{v}\|_{L^{2}}^{2}+\|\hat{q}_{v}\|_{L^{2}}^{2}+\|\hat{q}_{r}\|_{L^{2}}^{2}+\|\hat{Q}\|_{L^{2}}^{2} \right)+\frac{1}{8\delta^{2}}\|\nabla\hat{v}\|_{L^{2}}^{2}.
\end{align}

Next, we consider the $L^{2}$ estimate for $H$.
 Noting that $H=T+\frac{L}{c_{p}}q_{v}$, then $H$ satisfies
\begin{align*}
\partial_{t}H+v\nabla H+w\partial_{p}H+\mathcal{A}_{q}H-\frac{RT}{c_{p}p}w=f_{T}+\frac{L}{c_{p}}f_{q_{v}}.
\end{align*}
Thus
\begin{align*}
&\partial_{t}\hat{H}+v_{1}\nabla\hat{H}+w_{1}\partial_{p}\hat{H}+\hat{v}\cdot\nabla H_{2}+\hat{w}\partial_{p}H_{2}+\mathcal{A}_{q}\hat{H}\nonumber\\
=&\frac{R}{pc_{p}}\left(T_{1}\hat{w}+w_{2}\hat{T}\right)+f_{T_{1}}+\frac{L}{c_{p}}f_{q_{v1}}-
f_{T_{2}}-\frac{L}{c_{p}}f_{q_{v2}},
\end{align*}
with the boundary condition
\begin{align*}
{\rm on}\ \Gamma_{i}:\ \partial_{p}\hat{H}=-\hat{H},\
 {\rm on}\ \Gamma_{u}:\ \partial_{p}\hat{Q}=0,\
 {\rm on}\ \Gamma_{l}:\ \partial_{\textbf{n}}\hat{Q}=-\hat{H}.
\end{align*}
Taking the inner product of the equation for $\hat{H}$ with $\hat{H}$ in $L^{2}$ space, we can deduce that
\begin{align*}
&\frac{1}{2}\frac{d}{dt}\|\hat{H}\|_{L^{2}}^{2}+\|\nabla\hat{H}\|_{L^{2}}^{2}
+\|\partial_{p}\hat{H}\|_{L^{2}}^{2}\nonumber\\
\leq& \int_{\mathcal{M}}\frac{R}{pc_{p}}\left(T_{1}\hat{w}+w_{2}\hat{T}\right)\hat{H}d\mathcal{M}+ \int_{\mathcal{M}}\left(\hat{v}\cdot\nabla H_{2}+\hat{w}\partial_{p} H_{2}\right)\hat{H}d\mathcal{M}\nonumber\\
&+\int_{\mathcal{M}}\left(f_{T_{1}}-f_{T_{2}}\right)\hat{H}d\mathcal{M}
+\frac{L}{c_{p}}\int_{\mathcal{M}}\left(f_{q_{v1}}-f_{q_{v2}}\right)\hat{H}d\mathcal{M}.
\end{align*}
Through a similar argument as in (\ref{multiterm}), we have
\begin{align*}
&\int_{\mathcal{M}}\left(\hat{v}\cdot\nabla H_{2}+\hat{w}\partial_{p} H_{2}\right)\hat{H}d\mathcal{M}=-\int_{\mathcal{M}}(\hat{v}\cdot\nabla \hat{H}+\hat{w}\partial_{p}\hat{H})H_{2}d\mathcal{M}\nonumber\\
\leq&\|H_{2}\|_{L^{\infty}}\|\hat{v}\|_{L^{2}}\|\nabla\hat{H}\|_{L^{2}}+
\|H_{2}\|_{L^{\infty}}\|\hat{w}\|_{L^{2}}\|\partial_{p}\hat{H}\|_{L^{2}}\nonumber\\
\leq& C\|\hat{v}\|_{L^{2}}^{2}+\frac{1}{2}\|\nabla\hat{H}\|_{L^{2}}^{2}+\frac{1}{4}\|\partial_{p}\hat{H}\|_{L^{2}}^{2}
+\frac{1}{8\delta^{2}}\|\nabla\hat{v}\|_{L^{2}}^{2},
\end{align*}
where we have used the uniform boundness of $H_{2}$ which can be ensured by the uniform boundness of $T$ and $q_{v}$. Recalling definitions of $f_{T},f_{q_{v}}$, we have
\begin{align*}
\int_{\mathcal{M}}\left(f_{T_{1}}-f_{T_{2}}\right)\hat{H}d\mathcal{M}
\leq C\left(\|\hat{q}_{r}\|_{L^{2}}^{2}+\|\hat{q}_{v}\|_{L^{2}}^{2}+
\|\hat{H}\|_{L^{2}}^{2}\right)+\frac{1}{8\delta^{2}}\|\nabla\hat{v}\|_{L^{2}}^{2}
\end{align*}
and
\begin{align*}
\frac{L}{c_{p}}\int_{\mathcal{M}}\left(f_{q_{v1}}-f_{q_{v2}}\right)\hat{H}d\mathcal{M}
\leq C(\|\hat{q}_{r}\|_{L^{2}}^{2}+\|\hat{q}_{v}\|_{L^{2}}^{2}+\|\hat{H}\|_{L^{2}}^{2}).
\end{align*}
Using the H\"older inequality and the Young inequality, we can infer that
\begin{align*}
&\int_{\mathcal{M}}\frac{R}{pc_{p}}\left(T_{1}\hat{w}+w_{2}\hat{T}\right)\hat{H}d\mathcal{M}\nonumber\\
\leq& C\|T_{1}\|_{L^{\infty}}\|\hat{w}\|_{L^{2}}\|\hat{H}\|_{L^{2}}
+C\|w_{2}\|_{L^{2}}\|\hat{T}\|_{L^{4}}\|\hat{H}\|_{L^{4}}\nonumber\\
\leq&C\|\hat{H}\|_{L^{2}}\|\nabla\hat{v}\|_{L^{2}}
+C\|v_{2}\|_{H^{1}}(\|\hat{H}\|_{L^{4}}+\|\hat{q}_{v}\|_{L^{4}})\|\hat{H}\|_{L^{4}}\nonumber\\
\leq&C\|\hat{H}\|_{L^{2}}\|\nabla\hat{v}\|_{L^{2}}
+C\|v_{2}\|_{H^{1}}\|\hat{H}\|_{L^{2}}^{\frac{1}{2}}\|\hat{H}\|_{H^{1}}^{\frac{3}{2}}
+C\|v_{2}\|_{H^{1}}\|\hat{q}_{v}\|_{L^{2}}^{\frac{1}{4}}\|\hat{q}_{v}\|_{H^{1}}^{\frac{3}{4}}
\|\hat{H}\|_{L^{2}}^{\frac{1}{4}}\|\hat{H}\|_{H^{1}}^{\frac{3}{4}}\nonumber\\
\leq &C\|\hat{H}\|_{L^{2}}^{2}
+C\|v_{2}\|_{H^{1}}^{4}(\|\hat{q}_{v}\|_{L^{2}}^{2}+\|\hat{H}\|_{L^{2}}^{2})
+\frac{1}{8}\|\hat{H}\|_{H^{1}}^{2}+\frac{1}{8}\|\hat{q}_{v}\|_{H^{1}}^{2}
+\frac{1}{8\delta^{2}}\|\nabla\hat{v}\|_{L^{2}}^{2},
\end{align*}
where we have used the uniform boundness of $T$ in the second step and the Gagliardo-Nirenberg-Sobolev inequality in the third step. Combining the above inequalities, we can deduce that
\begin{align}\label{hat{H}}
&\frac{d}{dt}\|\hat{H}\|_{L^{2}}^{2}+\|\nabla\hat{H}\|_{L^{2}}^{2}
+\|\partial_{p}\hat{H}\|_{L^{2}}^{2}\nonumber\\
\leq&C(1+\|v_{2}\|_{H^{1}}^{4})\left(\|\hat{v}\|_{L^{2}}^{2}+|\|\hat{q}_{r}\|_{L^{2}}^{2}+\|\hat{q}_{v}\|_{L^{2}}^{2}+
\|\hat{H}\|_{L^{2}}^{2}\right)+\frac{1}{4\delta^{2}}\|\nabla\hat{v}\|_{L^{2}}^{2}
+\frac{1}{8}\|\hat{q}_{v}\|_{H^{1}}^{2}.
\end{align}
Considering inequalities (\ref{hat{v}}), (\ref{hat{Q}}), (\ref{hat{q}_{v}}), (\ref{hat{q}_{r}}) and (\ref{hat{H}}), and setting
\begin{align*}
\Psi(t)=\delta^{2}\left(|\|\hat{q}_{r}\|_{L^{2}}^{2}+\|\hat{q}_{v}\|_{L^{2}}^{2}+
\|\hat{H}\|_{L^{2}}^{2}+\|\hat{Q}\|_{L^{2}}^{2}\right)+\|\hat{v}\|_{L^{2}}^{2},
\end{align*}
 we can deduce that
 \begin{align*}
 \frac{d}{dt}\Psi(t)\leq A_{7}(t)\Psi(t),
 \end{align*}
 where
 \begin{align*}
 A_{7}(t)=&C\left(1+\|v_{2}\|_{H^{1}}^{4}+\|\nabla v_{2}\|_{L^{2}}^{2}+\|\nabla\partial_{p}v_{2}\|_{L^{2}}^{2}+\|\partial_{p}v_{2}\|_{L^{2}}^{2}
+\|\partial_{p}v_{2}\|_{L^{2}}^{4}\right.\nonumber\\ &\left.+\|\nabla\partial_{p}v_{2}\|_{L^{2}}^{2}
+\|\partial_{p}v_{2}\|_{L^{2}}^{2}\|\nabla\partial_{p}v_{2}\|_{L^{2}}^{2}\right).
 \end{align*}
Considering the regularity of $v$, using the Gronwall inequality, we can obtain the uniqueness of $v,Q,H,q_{v},q_{r}$.
\end{proof}

\textbf{Proofs of main results}

Combining the existence result in Proposition \ref{Globalquasi-strong} and the uniqueness result in Proposition \ref{uniqueness}, we can complete the proof of Theorem 2.1. Similarly, Theorem 2.2 can also be proved.

\subsection*{Acknowledgments}

This work was supported by the Natural Science Foundation  of China (No. 12271261), the Key Research and Development Program of Jiangsu Province (Social Development) (No.  BE2019725), the Qing Lan Project of Jiangsu Province and
Postgraduate Research and Practice Innovation Program of Jiangsu Province (No. KYCX21\_0930).

\end{document}